\newcommand{\Bbl}{\Big (}
\newcommand{\Bbr}{\Big )}
\newcommand{\Bsbl}{\Big [}
\newcommand{\Bsbr}{\Big ]}
\newtheorem{definition}{Definition}
\newtheorem{lemma}{Lemma}
\newtheorem{assumption}{Assumption}
\newtheorem{proposition}{Proposition}
\newtheorem{corollary}{Corollary}
\newcommand{\union}{\bigcup\limits}
\newcommand{\setB}{{\mathcal B}}
\newcommand{\setA}{{\mathcal A}}
\newcommand{\setR}{{\mathcal R}}
\newcommand{\setT}{{\mathcal T}}
\newcommand{\setC}{{\mathcal C}}
\newcommand{\setD}{\hat {\mathcal C}}
\newcommand{\setN}{{\mathcal N}}
\newcommand{\setp}{{\mathcal P}}
\newcommand{\setS}{{\mathcal S}}
\newcommand{\setSone}{{\mathcal S}_p}
\newcommand{\setP}{{\mathcal P}(\optbg)}
\newcommand{\setCp}{\setC(p^*,\optI)}
\newcommand{\setCpc}{\setC_c(p^*,\optI)}
\newcommand{\setCpt}{\setC_t(p^*,\optI)}
\newcommand{\setCq}{\setC(q^*,\optI)}
\newcommand{\setCqt}{\setC_t(q^*,\optI)}
\newcommand{\setCap}{\setC(p,\alloc)}
\newcommand{\setCapc}{\setC_c(p,\alloc)}
\newcommand{\setCcone}{{\setC}_{c,1}}
\newcommand{\setCctwo}{{\setC}_{c,2}}
\newcommand{\setCtone}{{\setC}_{t,1}}
\newcommand{\setCttwo}{{\setC}_{t,2}}
\newcommand{\settCc}{{\tilde \setC}_{c}}
\newcommand{\settCt}{{\tilde \setC}_{t}}
\newcommand{\settCcone}{{\tilde \setC}_{c,1}}
\newcommand{\settCctwo}{{\tilde \setC}_{c,2}}
\newcommand{\settCtone}{{\tilde \setC}_{t,1}}
\newcommand{\settCttwo}{{\tilde \setC}_{t,2}}
\newcommand{\setDp}{(\setDpc,\setDpt,p,\alloct)}
\newcommand{\setDpc}{\setD_{c}}
\newcommand{\setDpt}{\setD_{t}}
\newcommand{\setDq}{(\setDqc,\setDqt,q,\alloct)}
\newcommand{\setDqc}{\setD_{c}}
\newcommand{\setDqt}{\setD_{t}}
\newcommand{\setDqk}{(\setDqc^{(k)},\setDqt^{(k)},q^{(k)},\alloct^{(k)})}
\newcommand{\setDqck}{\setD^{(k)}_{c}}
\newcommand{\setDqtk}{\setD^{(k)}_{t}}
\newcommand{\setDql}{(\setDqc^{(l)},\setDqt^{(l)},q^{(l)},\alloct^{(l)})}
\newcommand{\setDqcl}{\setD^{(l)}_{c}}
\newcommand{\setDqtl}{\setD^{(l)}_{t}}
\newcommand{\setDqcone}{\setD^{(1)}_{c}}
\newcommand{\setDqtone}{\setD^{(1)}_{t}}
\newcommand{\alloc}{\allocb,\allocg}
\newcommand{\allocbg}{\allocb,\allocg}
\newcommand{\allocb}{\boldsymbol{b}}
\newcommand{\allocg}{\boldsymbol{\gamma}}
\newcommand{\opt}{\optb,\optg}
\newcommand{\optbg}{\opt}
\newcommand{\optb}{\boldsymbol{b}^*}
\newcommand{\optg}{\boldsymbol{\gamma}^*}
\newcommand{\optI}{\optbg}
\newcommand{\optt}{\boldsymbol{t}^*}
\newcommand{\alloct}{\boldsymbol{t}}
\begin{document}
\title{Optimal Bidding Strategies for Online Ad Auctions with Overlapping Targeting Criteria}
\author{%
  Erik Tillberg\\
  Department of Computer Science\\
  University of Toronto
  \And
  Peter Marbach \\
  Department of Computer Science \\
  University of Toronto \\
  \AND
  Ravi Mazumder \\
  Department of Electrical and Computer Engineering\\
  University of Waterloo}


\nipsfinalcopy 
\maketitle


\begin{abstract}
We analyze the problem of how to optimally bid for ad spaces in online ad auctions. For this we  consider  the general case of multiple ad campaigns  with overlapping targeting criteria.  In our analysis we first characterize the structure of an optimal bidding strategy. In particular, we show that an optimal bidding strategies decomposes the problem into disjoint sets of campaigns and targeting groups. In addition, we show that pure bidding strategies that use only a single bid value for each campaign are not optimal when the supply curves are not continuous. For this case, we derive a lower-bound on the optimal cost of any bidding strategy, as well as mixed bidding strategies that either achieve the lower-bound, or can get arbitrarily close to it. 


\end{abstract}


\section{Introduction}\label{sec:intro}
Online advertising has grown dramatically in recent years and is by now a massive market. To service the demand, ad exchanges have been set up in several places such as  New York and Toronto. Ad exchanges allow advertisers to target their ad campaigns towards  specific demographic groups. This leads to a new set of problems for bidding on ad spaces where specific targeting criteria need to be taken into account.  These decision problems are challenging due the large number of ad spaces that can be used to potentially place a given ad, as well as the competition between ad campaigns that target the same ad spaces. As a result, how to optimally bid for ad spaces has been an open problem, and commercial platforms bidding for ad spaces have been relying on  heuristics to make these decisions. 

In this paper we address this problem.
More precisely, we consider the situation where rather than directly buying ad spaces, advertisers use a broker, or a co-called demand supply platform (DSP), that acquires ad spaces on their behalf. The interaction between a DSP and advertiser is governed by a contract under which the DSP is obliged to acquire a certain number of ad spaces over a given period of time, where each ad space satisfies the targeting criteria. We refer to this contract as an ad campaign.
We assume that a DSP represents one or more campaign of one or more advertiser. Each campaign has its own targeting criteria, as well as a required number of ad spaces that the DSP needs to obtain for the campaign. 

A transaction (opportunity to buy an ad space) in an online ad marked unfolds as follows. The transactions begins with an end user visiting a web page or mobile app. This visit triggers a bid request that is sent to the ad exchange. The bid request contains information about the ad space, as well as about the end user that has accessed the web page or mobile ad. The ad exchange then forwards the bid request to the demand side platforms (DSPs).
A DSP that receives the bid request can then decide  whether or not to bid for the ad space by submitting a bid for the ad space. The ad exchange collects all the bids submitted by DSPs for the ad space, and uses a second-price auction to decide which of the competing DSPs will obtain the ad space.
The DSP that wins the auction can display an ad  to the end user on that ad space. Once an ad has been displayed, and can be viewed by the end user, this ad space is denoted, and counts, as an impression.

In this paper we analyze how a DSP should bid on a bid requests from the ad-exchange in order to obtain the required number of impressions for the ad campaigns it represents at the lowest possible cost.
We formally model this decision problem as an optimization (cost minimization) problem. To do that, we introduce the concept of targeting groups that groups bid requests from ad campaigns based on the bid request types that match the targeting criteria of ad campaigns. In addition, we associate with each targeting group a  supply curve that characterizes the number (volume) of impressions the DSP obtains during a given decision as a function of the bid value.

The main contributions of the paper are:
\begin{enumerate}
\item[a)] we provide a formal analysis of how a DSP optimal bids for ad spaces  in an online ad market for the case where a DSP has several campaigns, and the campaigns can have overlapping targeting criteria. In addition, we do not assume that supply curves to be convex and continuous functions, but only assume them to be  right-continuous and non-decreasing functions.
\item[b)] we show that an optimal bidding strategies decomposes the problem into disjoint sets of campaigns and targeting groups, where an optimal bidding strategy can be computed for each component individually. 
\item[c)] we show that pure bidding strategies that use only a single bid value for each campaign and targeting group pair are not optimal when the supply curves are not continuous. For this case we derive a lower-bound on the optimal cost of any bidding strategy, and provide  mixed bidding strategies that either achieve the lower-bound, or can get arbitrarily close to it.  
\end{enumerate}
We note that discontinuous supply curves are the norm in practical applications (see Fig.~\ref{fig:supply_curves} for an illustration). As a result, the problem considered in this paper can not be solved using  standard techniques from optimization theory to compute an optimal bidding strategy, which makes the analysis challenging. In the paper we provide a solution approach and technique that can be used to analyze supply curves that are right-continuous, non-decreasing functions, and thus not necessarily continuous or convex functions.

The rest of the paper is organized as follows. In Section~\ref{sec:related} we provide an overview of related work. In Section~\ref{sec:auction} we describe the second price auction that is used in online ad markets.
In Section~\ref{sec:model} we present the optimization problem to that we use to model the decision problem of the DSP. In Section~\ref{sec:single_target} we consider the special case of a single campaign with a single targeting group in order to illustrate the main intuition behind our analysis and results. In Section~\ref{sec:opt_solution}, we show that when the supply curves are given by continuous functions, then an optimal bidding strategy decomposes the problem into disjoint sets of campaigns and targeting groups,  where an optimal bidding strategy can be computed for each component individually.
In Section~\ref{sec:algorithm} we present the algorithm to compute an bidding strategy, and show in Section~\ref{sec:results} that this bidding strategy is optimal when the supply curves are given by continuous functions. In addition, we derive in Section~\ref{sec:results} a lower-bound on the optimal cost of any bidding strategy, and provide  mixed bidding strategies that either achieve the lower-bound, or can get arbitrarily close to it. In Section~\ref{sec:implementation} we discuss how the results obtained in the paper can be applied to practical systems. 



\section{Related Work}\label{sec:related}

In this section we provide an overview of the literature related to optimal bidding algorithms for DSPs. We first consider the literature that is directly related to the problem consider in this paper, i.e. existing literature on optimal bidding algorithms for DSPs.
Roughly, the existing literature on the DSP bidding problem can be grouped into the following four categories: 1) single campaigns,  2) many campaigns with identical targeting criteria, 3) many campaigns with different but non-overlapping targeting criteria, and 4) many campaigns with overlapping targeting criteria. In this paper we consider the last case, i.e. the general case where a DSP represents many campaigns that  have overlapping targeting criteria.

{\it The single campaign case:} There exist several results for the basic case of optimal bidding algorithms where a DSP respresents a single ad campaign~\cite{jiang,gummadi,zhang2014}, and  the goal of the DSP is to maximize the utility (valuation) of the obtained ad spaces given a budget constraint.
Compared with our analysis, this literature  does not consider the case where the supply curves are not necessarily differentiable, or continuous.



{\it Many Campaigns with Identical Targeting Criteria: } The first generalization of the single campaign case is to consider the case where a DSP respresents many campaigns, but all campaigns have an identical targeting criteria. This case has been studied in~\cite{zhang2015} where the authors propose a bidding strategy that maximizes revenue subject to a global budget and risk constraint; risk being the variance of revenue. The resulting algorithm decides on the optimal bid value, as well as the probabilities with which bid requests are assigned to a given campaign. This work considers a different problem (with a budget and risk constraint rather than a constraint on the number of impressions that a campaign needs to obtain) than the one considered in this paper, and the bidding strategies of~\cite{zhang2015} can not be applied, and would not be optimal, for the problem considered in this paper.

{\it Many Campaigns with Non-Overlapping Targeting Criteria:} In~\cite{zhang2012}, the authors consider the case where a DSP represents many campaigns and the campaigns have different, but disjoint, targeting criteria. For this case, ~\cite{zhang2012} considers the problem of maximizing the total utility (valuation) over all campaigns subject to a global budget constraint. The decision variables of the corresponding optimization problem are the amount of total budget that is allocated to each campaign, and the price that is used for to bid for ad spaces for the individual campaigns. The work in~\cite{zhang2012} derives an optimal bidding strategy for this case assuming that the supply curves (distribution function of the winning price for a given targeting criteria) are convex. This allows the problem to be formulated as a convex optimization problem, and can be solved using standard techniques from convex optimization. In addition, the analysis in~\cite{zhang2012} does not provide the structural properties of an optimal solution that we present in Section~\ref{sec:results} that shows that an optimal allocation decomposes the problem into disjoint components that can be solved individually, and that pure strategies are not optimal when the supply curves are not continuous (which is the case for the supply curves obtained in practice).

{\it Many Campaigns with Overlapping Targeting Criteria:} The most general case, and the case that we consider in this paper, is where a DSP represents many campaigns that can have overlapping targeting criteria. Obtaining an optimal bidding strategy for this case has been an open problem, and so far only heuristic strategies  have been considered  for this case~\cite{perlich,spentzouris}. 
In~\cite{perlich} the authors propose a heuristic algorithm to compute bidding values, and allocate bid requests to campaign, for the case where the DSP aims at maximizing the number of clicks that are received under a given budget constraint. The proposed algorithm uses a linear (heuristic) bidding function that is proportional to the predicted click through rate of a campaign for a given bid request, and allocates the bid request to the campaign with the highest bid value. In essence, the proposed algorithm allocates bid requests to the campaign that has the highest click-through rate. In~\cite{spentzouris}, the authors consider the special case where two campaigns either have identical targeting criteria (fully overlap), or have disjoint criteria (do not overlap). For this case, the authors propose a heuristic algorithm that uses linear regression to forecast the cost and number of clicks received. This forecast function is then used to formulate the decision problem of a DSP as an optimization problem that can be solved.

A large fraction of the literature on optimal bidding strategies for DSPs considers the case where a DSP aims at  acquiring a many ad spaces as possible under a given budget constraint, or at maximizing a the utility of the obtained ad spaces under a given budget constraint. In this paper, we consider a slightly different objective where the DSP needs to acquired a predefined number of ad spaces for each campaign, and wants to achieve this at the lowest possible cost.  The latter objective reflects that scenario that many DSPs face in practice.

Next  we highlight existing literature that is related to the general problem of online ad markets, but considers a fundamentally different problem compared with the one consider in this paper.

In~\cite{balseiro}, the authors study the interaction between several advertisers/DSPs  that are competing for an identical set of bid requests, i.e. each advertiser/DSP has  single campaign and the  campaigns (from the different advertisers/DSPs) have all the same targeting criteria. Using the game-theoretic approach, the authors derive a dynamic bidding strategy (for each advertiser/DSP) and show that the bid values under this strategy converges to a Nash equilibrium.

In~\cite{balseiro2014}, the authors study online ad markets from the perspective of the ad publishers (owners of the web pages or mobile ads where where the as spaces are located), rather than the DSP. In particular, \cite{balseiro2014} considers the situation where a publishers can choose between selling ad spaces either on contract-based market, or on a spot market (ad exchange).
A key difference between this problem, and the problem considered in this paper,  is that the total number of ad spaces that are available (at the publisher) are given (as part of the problem formulation), and do not depend on a decision variable (bid price) of the optimization problem.

There is a large body of existing literature on the problem
where a centralized platform collects available ad spaces, as well as the preferences of advertisers regarding the ad spaces, and the objective of the platform is to match ad spaces with advertisers with respect to a given criteria  (see~\cite{mehta2013} for an overview). This problem corresponds to  a  bi-partite matching problems that is NP (see discussion in~\cite{mehta2013}). As a result this body of research focuses on obtaining  efficient approximate algorithm for the corresponding bi-partite matching problems.  We note that this problem is fundamentally different from the problem considered in this paper.


\section{Second Price Auction}\label{sec:auction}
Before we define in Section~\ref{sec:model} the model that we use to study the optimal decisions by a DSP in an online ad market, we describe in this section the second price, sealed bid auction, that is used by  ad exchanges use to decide which DSP wins the ad spot for a given bid request. 

Suppose that there are $N$ DSPs submitting a bid for a given bid request, and let $(b_1,...,b_N)$ be the bids that are received by the ad exchange and assume that
$$b_i \neq b_j, \qquad i\neq j, i,j =1,...,N.$$
Then the DSP $i^* \in \{1,...,N\}$ with the highest bid $b_{i^*}$, i.e. we have that
$$ b_{i^*} \geq b_i, \qquad i=1,...,N, $$
will win the auction and receive the ad spot of the bid request. The price that DSP $i^*$ pays for the ad spot is equal to the value of the second highest bid in $(b_1,...,b_N)$, i.e. the price $p$ is equal to $b_s  \in \{1,...,N\}$ such that
$$ b_s \geq b_i, \qquad i \in \{1,...,N\}\backslash\{i^*\}.$$
We ignore in this paper the floor price of an auction; a floor price can easily be introduced in our analysis without changing the results.

Whenever a DSP wins an ad space for a campaign that it represents, we say that the DSP has received an impression (ad space) for the campaign.


\section{Problem Definition}\label{sec:model}

Recall that we consider the problem of how a DSP should bid for ad spaces in an online ad market. We assume that the DSP represents one or more ad campaigns. Each campaign is given by a contract that specifies the targeting criteria for the campaign, and the number of impressions the campaign needs to obtain over a given time period (for example, a month). Below we describe the model that we use to analyze this problem. 

\subsection{Decision Period}
We focus in our analysis on the case where the DPS is required to spread out the total number of impression over the given time period, and needs to obtain an equal number of impressions in during a number of pre-specified smaller time intervals (for example, a day).  In addition, we allow the DSP to break the time intervals over which a required number of impressions needs to be obtained into smaller decision periods. A the length of a decision period could relatively long such as an hour, or  shorter such as a few minutes. In the following, we analyze how a DSP optimally decides on the bid values and bid request allocations for a given decision period. To do that, we formally model the decision problem as an optimization problem

\subsection{Campaigns}
Let $\setN = \{1,...,N_c \}$
be the set of campaigns that the DSP represents. For each campaign $i \in \setN$, let $I_i$ be the total number of impressions that campaign is required (scheduled) to obtain during the given decision period.

We assume that each bid request $r$ that the DSP receives from the ad exchange,  is characterized by its properties $t(r)$. Examples of a bid request properties  could be the location, age, and gender,  of the end user who accessed the web page or mobile app and triggered the bid request.  We refer to $t(r)$ as the type of the bid request. The type $t(r)$ determines whether  bid request $r$ matches the targeting criteria of a given campaign $i \in \setN$. More precisely, let $\setR$ be the set of all possible bid request types, and let $\setR_i \subseteq \setR$ be  the set of all bid request types that match the targeting criteria of campaign $i$. For example, the targeting criteria of an a campaign could be to  show ads to  users located in New York City aged between 30-50 years.

If for a given bid request $r$ matches the targeting criteria of  campaign $i$ and we have that $t(r) \in \setR_i$, then we say that $r$ is an admissible bid request for campaign $i$.

\subsection{Targeting Groups}
The targeting criteria $\setR_i$ of the campaigns $i \in \setN$ are not necessarily disjoint sets. Moreover, the targeting criteria $\setR_i$ and $\setR_j$ of two ad campaigns $i$ and $j$ that overlap, may not fully overlap. As a result the set of targeting criteria $\{\setR_i\}_{i \in \setN}$ has  very little structure, which makes it difficult to use them directly to formulate an optimization problem for  the decision problem of the DSP. To overcome this problem, instead of using the targeting criteria directly, we use the set of targeting criteria $\setR_i$, $i \in \setN$, to construct a set $\setT = \{1,...,N_r\}$ of $N_r$ targeting groups with the following structure.

\begin{definition}\label{def:targeting_groups}
A set of targeting groups $\setT$ is given by $N_r$ sets $\setT_j \subset \setR$, such that
$$\bigcup\limits_{j=1}^{N_r} \setT_j = \bigcup\limits_{i=1}^{N_c} \setR_i$$
and 
$$ \setT_m \cap \setT_n = \emptyset, \qquad m \neq n, m,m=1,...,N_r.$$
Furthermore, for every campaign $i$ there exists a set $\setA_i \subseteq \{1,...,N_r\}$ of targeting groups such that
$$\setR_i = \bigcup\limits_{j \in \setA_i} \setT_j$$
and
$$\setR_i \cap \setT_j = \setT_j, \qquad j \in \setA_i.$$
For given a targeting group $j$, the set $\setB_j$ is given by
$$ \setT_j \cap \setR_i = \setT_j.$$
\end{definition}
Note that as the number of campaigns is finite, a set of targeting groups as given by Definition~\ref{def:targeting_groups} can always be constructed. Below we discuss properties of targeting groups as given by Definition~\ref{def:targeting_groups} that play an important role in our analysis.

The set $\setA_i$, $i \in \setN$, in Definition~\ref{def:targeting_groups} consists of all the targeting groups that campaign $i$ can bid on. That is, given a bid request $r$ such that $t(r) \in \setR_i$, we have that there exists a $j \in \setA_i$ such that $t(r) \in \setT_j$.
Furthermore as the targeting groups are given by disjoint sets, for given a bid request $r$ such that $t(r) \in \setR_i$ there exists exactly one  $j \in \setA_i$ such that $t(r) \in \setT_j$.

Similarly, the set $\setB_j$, $j \in \setT$, consists of all of the campaigns that can bid on a request $r$ such that $t(r) \in \setT_j$ and $t_r \in \setR_i$. That is, it consists of all the campaigns that can bid on the set of targets in $\setT_j$. 

The key property of the targeting groups $\setT$ is that  the sets $\setT_j$, $j \in \setT$, are disjoint, and hence each  type $t(r)$ of a bid request $r$ maps to exactly one targeting group.



\begin{figure*}
\centering
\includegraphics[width=0.6\textwidth]{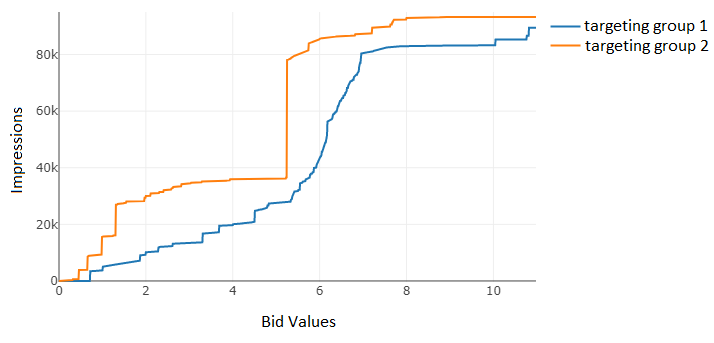}
\caption{Two examples of supply obtained from real-life data.}
\label{fig:supply_curves}
\end{figure*}

\subsection{Supply Curves}\label{sec:supply_curves}
Given the set $\setT$ of targeting groups, we associate with each targeting group $j \in \setT$  a supply curve $D_j(b)$, $b \geq 0$. The supply curve $D_j(b)$ has the following interpretation.  If during the decision period the  DSP bids with a fixed bid value $b_j$ for all bid requests that match targeting group $\setT_j$, $j \in \setT$, then $D_j(b_j)$ is  the total number of impressions that the DSP obtains from targeting group $j$ over  the decision period. Note that by definition, the supply curves  $D_j(b)$, $b\geq 0 $, are non-decreasing function. Fig.~\ref{fig:supply_curves} shows two supply curves that have been obtained from actual data from an online ad exchange. From Fig.~\ref{fig:supply_curves} we have that the supply curves are not necessarily given by differentiable, or continuous, functions, and we  make the following assumption regarding the supply curves  $D_j(b)$, $j \in \setT$.

\begin{assumption}\label{ass:supply_curves}
For each targeting group $j \in \setT$ the supply curve $D_j(b):[0,\infty) \mapsto [0,\infty)$ is a non-decreasing right-continuous function.
\end{assumption}      


Using this definition of a supply curve, we have that if the  DSP bids on all bid requests that match a given  targeting group $j \in \setT$ with a fixed bid $b_j$, then the DSP will obtain $D_j(b_j)$ impressions for  targeting group $j$ with a cost of  (see for example~\cite{gallager})
\begin{eqnarray*}
&& \int_{0}^{b_t} b dD_j(b) 
= D_j(b_j)b_j - \int_0^{b_j}D_j(b) db.
\end{eqnarray*}

\subsection{Decision Variables}

For each targeting group $j \in \setT$ the DSP has to decide on
\begin{enumerate}
\item[a)] the fraction $\gamma_{i,j}$, $i \in \setB_j$, of bid requests from the targeting group $j$ that are allocated to campaign $i$ to bid on, and
\item[b)] the bid value $b_{i,j}$ that it uses for bidding on requests in targeting group $j$, $j \in \setA_i$, that are allocated to campaign $i$.
\end{enumerate}

We then use the following notation for the decision variables of the DSP during a given decision period. 
Let
$$\allocg_j = \{\gamma_{i,j}\}_{i \in \setB_j}, \qquad i=1,...,N_r,$$
be the faction of bid request from targeting group $j$ that are allocated to campaign $i$, and let 
$$\allocb_j = \{b_{i,j}\}_{j \in \setA_i}, \qquad i=1,...,N_c,$$
be the bid values that campaign $i$ uses for bid request from targeting group $j$. 

Furthermore, let
$$\allocb = (\allocb_1, ...,\allocb_{N_c}), \mbox{ and } \allocg = (\allocg_1, ...,\allocg_{N_r}),$$
be the set of decision variables for all campaigns, and all targeting groups, and let $(\alloc)$ denote the corresponding allocation of the DSP for a given decision period.

If the DSP allocates for a given decision a fraction $\gamma_{i,j}$ of the bid requests of targeting group $j$ to campaign $i$, then the total number of impressions that campaign $i$ will obtain from targeting group $j$ is given by $\gamma_{i,j} D_j(b_{i,j})$, and the total cost for obtaining  these impressions is given by
$$ \gamma_{i,j} \Bsbl D_j(b_{i,j}) -   \int_0^{b_{i,j}}D_j(b) db \Bsbr.$$

\subsection{Optimization Problem}
Recall that for a given decision period the goal of the DSP is to obtain the required number of impressions for each campaign $i \in \setN$ at the lowest possible cost.
Using the above notations and definitions, the corresponding optimization problem  is given by
\begin{equation}\label{eq:opt_problem}
\begin{aligned}
& \underset{(\alloc) }{\text{minimize}}
& & \sum_{i=1}^{N_c}\bigg[\sum_{j=1}^{N_r} \gamma_{i,j} \Bbl D_j(b_{i,j})b_{i,j} - \int_0^{b_{i, j}}D_{j}(b)db \Bbr \bigg] \\
& \text{subject to}
  & & \sum_{j \in \setA_i} \gamma_{i,j} D_j(b_{i, j}) \geq I_i, \; i=1,...,N_c, \\
& & & \sum_{i \in \setB_j} \gamma_{i,j} \leq 1, \; j=1,...,N_r, \\
& & & b_{i,j} \geq 0, \; i=1,...,N_c, j=1,...,N_r,  \\
  & & & \gamma_{i,j} \geq 0, \; i=1,...,N_c, j=1,...,N_r.
\end{aligned}
\end{equation}
The first inequality requires the number of impressions obtained for each campaign to meet the required number of impressions.  The second inequality states that the total fraction of bid request that are allocated to campaigns can not exceed 1. The last two constraints state that the  the bid values and and allocations can not be negative.

In the following we refer to an allocation $(\alloc)$ that satisfies the above conditions as a solution to the optimization problem  given by Eq.~\eqref{eq:opt_problem}


For our analysis we assume that the  DSP can always get the required impressions by using high enough bid values $b_{ij}$, $i\in \setN, j \in \setT$, 
and thus the optimization problem has a feasible solution.


\subsection{Online Bidding Algorithm}\label{sec:online_algo}
In this section discuss how a  solution $(\alloc)$ of the optimization problem given by Eq.~\eqref{eq:opt_problem} can be used to implement an online bidding algorithm. 

Let $(\alloc)$ be a solution for the optimization problem given by Eq.~\eqref{eq:opt_problem}.  The solution $(\alloc)$ can then be used to implement an online algorotihm as follows. Whenever a new bid request $r$ is received by the DSP, the DSP first determines the type $t(r)$ of the bid request. Next, the DSP determines the targeting group the type $t(r)$ belongs to. Let $j$ be this targeting group. Using the solution $(\alloc)$, the DSP then allocates with  probability $\gamma_{i,j}$ the bid request to campaign $i$, $i \in \setB_j$; and uses the bid value $b_{i,j}$ to obtain the ad space for campaign $i$.

Note that the solution $(\alloc)$ is (pre-)computed in the background (off-line), and the values of the solution $(\alloc)$ are used to implement the online bidding algorithm described above. The length of a decision period determines how often the solution $(\alloc)$ is re-computed, and the bid values and allocations for the online algorithm are updated. In Section~\ref{sec:implementation} we discuss additional implementation issues of the online algorithm.

\section{Single Campaign with a Single Targeting Group}\label{sec:single_target}

Before we start with the analysis of an optimal allocation for the optimization problem given by Eq.~\eqref{eq:opt_problem}, we provide in this section the main intuition behind our analysis and results. For this we consider the special case where a single campaign $i=1$ gets a total of $I_1$ impressions  from a single targeting group $j=1$. We assume  that
$$\lim_{b \to \infty} D_1(b) > I_1,$$
and there is sufficient supply to support the impressions of the campaign.

Our first result characterizes the optimal allocation $(\opt)$  for the case where there exists a bid value $b^*$ such that $D_1(b^*) = I_1$.
\begin{proposition}\label{prop:single_opt_cont}
Consider the special case where a  single campaign $i=1$ gets impressions from a single targeting group $j=1$, and $I_1$ is the total number of impressions that the campaign  is required to obtain.
If there exists a bid value $b^*$ such that
$$D_1(b^*) = I_1,$$
then the allocation $(\opt)$ given by
$$\gamma^*_{1,1} = 1 \quad \mbox{ and } \quad  b^*_{1,1} = b^*$$
is an optimal allocation. Furthermore, the optimal cost is given by
$$I_1 b^* - \int_0^{b^*} D_1(b) db.$$
\end{proposition}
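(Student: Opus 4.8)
The plan is to show that the proposed allocation is both feasible and cost-minimal. Feasibility is immediate: setting $\gamma^*_{1,1}=1$ and $b^*_{1,1}=b^*$ gives $\gamma^*_{1,1}D_1(b^*)=D_1(b^*)=I_1 \geq I_1$, so the impression constraint holds with equality, and $\sum_{i\in\setB_1}\gamma_{i,1}=1\leq 1$ together with the nonnegativity constraints are trivially satisfied. The corresponding cost is then $\gamma^*_{1,1}\bigl[D_1(b^*)b^* - \int_0^{b^*}D_1(b)db\bigr] = I_1 b^* - \int_0^{b^*}D_1(b)db$, which matches the claimed optimal cost. The real content is the lower bound: I must argue that no feasible allocation achieves a strictly lower cost.

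For the optimality argument, note that with a single campaign and a single targeting group, a feasible allocation is just a pair $(\gamma,b)$ with $\gamma\in[0,1]$, $b\geq 0$, and $\gamma D_1(b)\geq I_1$. The cost is $C(\gamma,b)=\gamma\bigl[D_1(b)b - \int_0^b D_1(s)ds\bigr]$. The plan is to reason about the structure of cost-minimizing solutions in two steps. First I would argue it is never beneficial to ``over-buy'': since the cost is nonnegative in the integrand and increasing in the volume purchased, any feasible solution with $\gamma D_1(b) > I_1$ can be modified to one buying exactly $I_1$ impressions at no greater cost (either by lowering $\gamma$, or by observing that supplying exactly $I_1$ at bid $b^*$ is cheaper). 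Second, and more substantively, I would rewrite the cost in terms of the volume actually obtained. The key identity is that the integral $\int_0^b b\,dD_1(b) = D_1(b)b - \int_0^b D_1(s)ds$ equals the true expected second-price cost of acquiring the $D_1(b)$ impressions won at bid level $b$; geometrically it is the area to the \emph{left} of the supply curve up to height $D_1(b)$. Acquiring a target volume $I_1$ of impressions costs at least this area-to-the-left up to height $I_1$, and bidding at $b^*$ (where $D_1(b^*)=I_1$) realizes exactly that area, so it is the cheapest way to obtain $I_1$ impressions.

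Concretely, I would let $b$ be any bid with $\gamma D_1(b)\geq I_1$ and compare $C(\gamma,b)$ against the target cost $I_1 b^* - \int_0^{b^*}D_1(s)ds$. The cleanest route is to treat the two cases $b \geq b^*$ and $b < b^*$ separately. When $b\geq b^*$, integrating and using monotonicity of $D_1$ one shows $D_1(b)b - \int_0^b D_1(s)ds \geq D_1(b^*)b^* - \int_0^{b^*}D_1(s)ds$ (the added strip has cost at least $b^*$ per unit volume, which exceeds the marginal value of relaxing the constraint); combined with $\gamma\leq 1$ this yields the bound after accounting for the volume actually needed. When $b < b^*$, we have $D_1(b)\leq I_1$ (by right-continuity and the definition of $b^*$), so feasibility forces $\gamma\geq I_1/D_1(b)$, possibly $>1$ unless $\gamma=1$ and $D_1(b)\geq I_1$, which pins the bid up to $b^*$. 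I expect the main obstacle to be handling the discontinuities of $D_1$ carefully: because $D_1$ is only right-continuous and non-decreasing (Assumption~\ref{ass:supply_curves}), there may be a jump at $b^*$, so I must verify via the Stieltjes formulation that the chain of inequalities and the boundary comparison at $b^*$ remain valid across jumps and that the claimed cost is attained exactly rather than merely approached. Establishing this monotone-area comparison rigorously for right-continuous $D_1$, rather than assuming continuity or differentiability, is the crux of the argument.
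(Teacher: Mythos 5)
Your proposal is correct for the statement as written, but it takes a genuinely different route from the paper's. The paper never argues directly over the feasible set of Eq.~\eqref{eq:opt_problem}: it restates the proposition for the larger class of \emph{mixed} strategies and proves it in Appendix~\ref{app:single_opt_cont} in two lines, by citing the general lower bound of Proposition~\ref{prop:single_mixed_strategies} (itself proved in Appendix~\ref{app:mixed_strategies} by first invoking Lemmas~\ref{lemma:mixed_I_1}~and~\ref{lemma:mixed_1} to reduce to strategies with $\sum_b \gamma_{1,1}(b)D_1(b)=I_1$ and $\sum_b \gamma_{1,1}(b)=1$, and then splitting the bid values into the sets $B^{(-)}$ and $B^{(+)}$ where $D_1(b)$ lies below or above $I_1$ and applying the monotonicity bounds $\int_b^{b^*}D_1(x)\,dx \geq D_1(b)(b^*-b)$ and $\int_{b^*}^{b}D_1(x)\,dx \leq D_1(b)(b-b^*)$), and then observing that the proposed pure allocation attains that bound. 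You instead work only with pure allocations $(\gamma,b)$ and run a two-case comparison ($b \geq b^*$ versus $b < b^*$) via the area-to-the-left-of-the-supply-curve reformulation. Both are sound: your argument is more elementary and self-contained, while the paper's buys a strictly stronger conclusion --- optimality against \emph{all} mixed strategies --- which is precisely the machinery reused later (Propositions~\ref{prop:single_opt_cost}, \ref{prop:mixed_strategies}, and~\ref{prop:opt_cost}), so the paper gets this proposition essentially for free once that infrastructure exists. Two points in your sketch need tightening if written out. First, in the case $b \geq b^*$, the inequality $D_1(b)b - \int_0^b D_1 \geq D_1(b^*)b^* - \int_0^{b^*}D_1$ ``combined with $\gamma \leq 1$'' does not give a lower bound on the cost (an upper bound on $\gamma$ bounds the cost from above); what you need is the feasibility bound $\gamma \geq I_1/D_1(b)$ together with $\int_{b^*}^{b}D_1(s)\,ds \leq D_1(b)(b-b^*)$, which is presumably what ``accounting for the volume actually needed'' means. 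Second, since the hypothesis only asserts existence of \emph{some} $b^*$ with $D_1(b^*)=I_1$ (not minimality), in the case $b < b^*$ you should note explicitly that feasibility forces $D_1(b)=I_1$ and $\gamma=1$, and that the claimed optimal cost is unchanged because $D_1 \equiv I_1$ on $[b,b^*]$, so this case yields equality rather than a contradiction.
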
  
We provide a proof of Proposition~\ref{prop:single_opt_cont} in Appendix~\ref{app:single_opt_cont}. 
Proposition~\ref{prop:single_opt_cont} implies that if the supply curve $D_1(b)$ is given by a continuous function, then the allocation $(\opt)$  given in  Proposition~\ref{prop:single_opt_cont} is an optimal allocation. To see this, note that if the supply curve $D_1(b)$ is given by a continuous function, then there exists a bid value $b^*$ such that
$$D_1(b^*) = I_1.$$

If the the supply curve $D_1(b)$ is not continuous, and there does not exist a bid value $b^*$ such that $D_1(b^*) = I_1$, then we define the  bid value $b^*$ such that  
\begin{equation}\label{eq:opt_b_1}
  D_1(b^*) \geq I_1
\end{equation}  
and
\begin{equation}\label{eq:opt_b_2}
  D_1(b) < I_1, \quad 0 \leq b < b^*.
\end{equation}
In the next subsection we study optimal allocations for this case. To do that we use the following notation. We  let 
$$ D^{(-)}(b^*) = \lim_{b \uparrow b^*} D_1(b) < D_1(b^*).$$

\subsection{Pure Bidding Strategies}

Note that the optimization problem given by Eq.~\eqref{eq:opt_problem} considers allocations under which each campaign $i \in \setN$ uses (as most) a single bid value $b_{i,j}$ for targeting group $j \in \setA_i$.  We refer such an allocation  as a  pure bidding strategy. Let $\setSone$ be the set of all pure bidding strategies. We then have the following result.

\begin{lemma}\label{lemma:single_opt_pure}
  Consider the special case where a single campaign $i=1$ gets impressions from a single targeting group $j=1$, and  $I_1$ is the total number of impressions that the campaign is required to obtain.
Furthermore, let the bid value $b^*$ be as given by Eq.~\eqref{eq:opt_b_1}~and~\eqref{eq:opt_b_2}.
Then the pure bidding strategy given by
$$b^*_{1,1} = b^* \mbox{ and } \gamma^*_{1,1} = \frac{I_1}{D_1(b^*)}$$
is an optimal pure bidding strategy.
Furthermore the cost under the optimal pure strategy is given by
$$ I_1 b^* -  \gamma^*_{1,1} \int_0^{b^*} D_1(b) db.$$
\end{lemma}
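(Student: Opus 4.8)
The plan is to reduce the optimization over all pure strategies to a one–dimensional problem and then exploit the monotonicity structure of the resulting cost. A pure strategy for this special case is simply a pair $(b,\gamma)=(b_{1,1},\gamma_{1,1})$ subject to $\gamma D_1(b)\ge I_1$, $0\le\gamma\le 1$, and $b\ge 0$, with cost
$$C(b,\gamma)=\gamma\Bsbl D_1(b)b-\int_0^{b}D_1(s)\,ds\Bsbr.$$
First I would optimize over $\gamma$ for fixed $b$, and then over $b$.

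For fixed $b$, integration by parts (as already used in Section~\ref{sec:supply_curves}) gives $D_1(b)b-\int_0^{b}D_1(s)\,ds=\int_0^{b}s\,dD_1(s)\ge 0$, since $D_1$ is non-decreasing. Hence $C(b,\gamma)$ is non-decreasing in $\gamma$, so the optimal $\gamma$ is the smallest value consistent with the constraints, namely $\gamma=I_1/D_1(b)$; this also forces $D_1(b)\ge I_1$ in order to have $\gamma\le 1$. Because $D_1$ is non-decreasing and $D_1(b)<I_1$ for $b<b^*$ by Eq.~\eqref{eq:opt_b_2}, every feasible $b$ must satisfy $b\ge b^*$. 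The problem thus reduces to minimizing
$$g(b)=I_1\,\frac{D_1(b)b-\int_0^{b}D_1(s)\,ds}{D_1(b)}$$
over $b\ge b^*$, and evaluating $g(b^*)$ reproduces exactly the claimed cost with $\gamma^*_{1,1}=I_1/D_1(b^*)$.

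The crux is to show that $g$ is non-decreasing on $[b^*,\infty)$. Writing $\mu$ for the Lebesgue–Stieltjes measure of $D_1$, so that $D_1(b)=\mu([0,b])$ and $\int_0^{b}s\,dD_1(s)=\int_{[0,b]}s\,d\mu(s)$, I would recognize $g(b)/I_1$ as the conditional mean $\bar s(b)=\int_{[0,b]}s\,d\mu(s)/\mu([0,b])$, which is well defined since $\mu([0,b])\ge D_1(b^*)\ge I_1>0$. For $b^*\le b_1<b_2$, splitting $[0,b_2]=[0,b_1]\cup(b_1,b_2]$ and using $\bar s(b_1)\le b_1$ together with $s>b_1$ on $(b_1,b_2]$ shows that $\bar s(b_2)$ is a weighted average of $\bar s(b_1)$ and a quantity that is at least $b_1\ge\bar s(b_1)$, hence $\bar s(b_2)\ge\bar s(b_1)$. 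Therefore $g$ is non-decreasing and attains its minimum over $[b^*,\infty)$ at $b=b^*$, establishing optimality of $(b^*,I_1/D_1(b^*))$ and the stated cost.

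The main obstacle I anticipate is this monotonicity step: one must handle the ratio of the two non-decreasing functions $D_1(b)b-\int_0^{b}D_1(s)\,ds$ and $D_1(b)$, whose quotient is not obviously monotone, and be careful with the Stieltjes integration by parts and with a possible atom of $\mu$ at $0$ (i.e.\ $D_1(0)>0$). The conditional-mean viewpoint resolves this cleanly, since enlarging the averaging window $[0,b]$ can only add mass at points no smaller than the current average; the remaining work is routine bookkeeping to confirm $g(b^*)=I_1 b^*-\gamma^*_{1,1}\int_0^{b^*}D_1(s)\,ds$.
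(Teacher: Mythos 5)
Your proof is correct, and it supplies substantially more than the paper does: the paper gives no formal proof of Lemma~\ref{lemma:single_opt_pure}, only the one-sentence remark that $b^*$ as defined by Eqs.~\eqref{eq:opt_b_1} and~\eqref{eq:opt_b_2} is ``the lowest bid value to obtain the $I_1$ impressions,'' which asserts rather than proves that bidding as low as feasibility permits is optimal. Your route makes this rigorous. The reduction is the natural one --- eliminate $\gamma$ first, since the cost is a non-negative quantity times $\gamma$, so the smallest feasible $\gamma = I_1/D_1(b)$ is optimal, which in turn forces $D_1(b) \geq I_1$ and hence $b \geq b^*$ --- and the real content is your key lemma that the normalized cost $g(b)/I_1 = \bar s(b) = \int_{[0,b]} s\, d\mu(s)/\mu([0,b])$ is non-decreasing on $[b^*,\infty)$. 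This is exactly the step hidden inside the paper's one-liner: both the numerator $D_1(b)b - \int_0^b D_1(s)\,ds$ and the denominator $D_1(b)$ are non-decreasing, but a ratio of non-decreasing functions need not be monotone, and it is your ``new mass enters only at points no smaller than the current average'' argument that closes this gap (in auction terms: the average second price paid per impression won cannot decrease as the bid increases). It is also worth noting that the lemma genuinely needs its own argument of this kind and cannot be read off from the mixed-strategy lower bound of Proposition~\ref{prop:single_opt_cost}, since the optimal pure cost $I_1 b^* - \gamma^*_{1,1}\int_0^{b^*} D_1(b)\,db$ strictly exceeds that bound whenever $\gamma^*_{1,1} < 1$; your proof correctly works within the class of pure strategies. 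The final evaluation $g(b^*) = I_1 b^* - \gamma^*_{1,1}\int_0^{b^*} D_1(s)\,ds$ matches the claimed cost, and your attention to the Stieltjes integration by parts and a possible atom of $\mu$ at $0$ is sound, since such an atom contributes mass to $\mu([0,b])$ but nothing to $\int_{[0,b]} s\,d\mu(s)$ and so does not disturb the averaging argument.
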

The result of Lemma~\ref{lemma:single_opt_pure} follow from the fact that the bid value  $b^*$ be as given by Eq.~\eqref{eq:opt_b_1}~and~\eqref{eq:opt_b_2} is the lowest bid value to obtain the $I_1$ impressions required by the campaign. Note that if we have that $D_j(b^*) = I_1$, then the result of Lemma~\ref{lemma:single_opt_pure} is identical to the result of Proposition~\ref{prop:single_opt_cont}.


\subsection{Mixed Bidding Strategies}\label{ssec:single_mixed}
We next consider a more general class of bidding strategies, where a campaign can use several bid values for a given targeting group. We refer to such strategies as mixed bidding strategies.
For a given mixed bidding strategy $s$, let $\setp_{1,1}$ be the bid values that the campaign $1$ uses for targeting group $1$, and let $\gamma_{1,1}(b)$, $b \in \setp_{1,1}$,  be the fraction of bid requests from targeting group $1$ for which the campaign uses bid value $b$, where we have the constraint that
$$\sum_{b \in \setp_{1,1}} \gamma_{1,1}(b) \leq 1.$$
Let $\setS$ be the set of all such mixed bidding strategies.

The cost $C(s)$ of a mixed bidding strategy $s \in \setS$ to obtain
$$\sum_{b \in \setp_{1,1}} \gamma_{1,1}(b) D_1(b)$$
impressions is  given by
$$C(s) = \sum_{b \in \setp_{1,1}} \gamma_{1,1}(b) \left [ D_1(b) b - \int_0^b D_1(x)dx \right ].$$

We have the following result.

\begin{proposition}\label{prop:single_opt_cost}
Consider the special case where a single campaign $i=1$ gets impressions from a single targeting group $j=1$, and $I_1$ is the total number of impressions that the campaign is required to obtain.
Let $s \in \setS$ given  mixed bidding strategy such that
$$\sum_{b \in \setp_{1,1}} \gamma_{1,1}(b) D_1(b) = I_1.$$
Then we have
$$ C(s) \geq I_1 b^* - \int_0^{b^*} D_1(b) db,$$
where the bid value $b^*$ is as given by Eq.~\eqref{eq:opt_b_1}~and~\eqref{eq:opt_b_2}.
\end{proposition}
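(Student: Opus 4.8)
The plan is to reduce the statement to a single ``supporting-line'' inequality that holds for each individual bid value, and then to sum this inequality against the weights $\gamma_{1,1}(b)$ using the two hypotheses $\sum_{b}\gamma_{1,1}(b)D_1(b)=I_1$ and $\sum_{b}\gamma_{1,1}(b)\le 1$. The attractive feature of this route is that it never uses continuity, differentiability, or convexity of $D_1$; only the facts that $D_1$ is non-negative and non-decreasing are needed. Geometrically, $D_1(b)b-\int_0^b D_1(x)\,dx$ is the area trapped between the supply curve and the horizontal line at height $D_1(b)$, and the claimed lower bound $I_1 b^*-\int_0^{b^*}D_1(x)\,dx$ is the corresponding area at the target level $I_1$; the per-bid inequality formalizes the intuition that paying the ``uniform marginal price'' $b^*$ for all $I_1$ impressions is cheapest.

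First I would establish the key per-bid estimate: for every $b\ge 0$,
\[
D_1(b)\,b-\int_0^{b}D_1(x)\,dx \;\ge\; b^* D_1(b)-\int_0^{b^*}D_1(x)\,dx .
\]
Subtracting $b^*D_1(b)$ from both sides and combining the integrals, this is equivalent to
\[
D_1(b)\,(b-b^*)\;\ge\;\int_{b^*}^{b}D_1(x)\,dx ,
\]
which follows purely from monotonicity in two cases. If $b\ge b^*$, then $D_1(x)\le D_1(b)$ for $x\in[b^*,b]$, so $\int_{b^*}^{b}D_1(x)\,dx\le D_1(b)(b-b^*)$. If $b< b^*$, then $D_1(x)\ge D_1(b)$ for $x\in[b,b^*]$, so $\int_{b}^{b^*}D_1(x)\,dx\ge D_1(b)(b^*-b)$, which is the same statement after negation. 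Note that the defining properties of $b^*$ in Eq.~\eqref{eq:opt_b_1} and Eq.~\eqref{eq:opt_b_2} are not even invoked here; they only fix which constant we compare against.

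Next I would multiply this per-bid estimate by $\gamma_{1,1}(b)\ge 0$, sum over $b\in\setp_{1,1}$, write $W=\sum_{b\in\setp_{1,1}}\gamma_{1,1}(b)\le 1$, and apply the equality constraint $\sum_{b}\gamma_{1,1}(b)D_1(b)=I_1$:
\begin{align*}
C(s) &\ge b^*\sum_{b\in\setp_{1,1}}\gamma_{1,1}(b)D_1(b)-\Big(\int_0^{b^*}D_1(x)\,dx\Big)\sum_{b\in\setp_{1,1}}\gamma_{1,1}(b)\\
&= b^* I_1 - W\int_0^{b^*}D_1(x)\,dx .
\end{align*}
Finally, since $0\le W\le 1$ and $\int_0^{b^*}D_1(x)\,dx\ge 0$ (because $D_1\ge 0$), we have $-W\int_0^{b^*}D_1(x)\,dx\ge -\int_0^{b^*}D_1(x)\,dx$, and therefore $C(s)\ge I_1 b^*-\int_0^{b^*}D_1(x)\,dx$, which is the desired bound.

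The only delicate points are getting the direction of the per-bid inequality right in the regime $b<b^*$, where the supply curve lies \emph{above} (not below) the level $D_1(b)$, and verifying that the unused allocation mass $1-W$ helps rather than hurts the final bound; it helps precisely because $D_1\ge 0$ forces $\int_0^{b^*}D_1\ge 0$. I expect the main step to be conceptual rather than computational: recognizing the per-bid supporting-line inequality and seeing that it holds from monotonicity alone. Once it is in hand, the summation is immediate and each hypothesis (the equality constraint and the $\le 1$ budget on allocations) enters exactly where it is needed.
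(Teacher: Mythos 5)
Your proof is correct, and it takes a noticeably cleaner route than the paper's. The paper establishes this statement as Proposition~\ref{prop:single_mixed_strategies} in Appendix~\ref{app:mixed_strategies}: it first invokes two reduction lemmas (Lemma~\ref{lemma:mixed_I_1} and Lemma~\ref{lemma:mixed_1}) to argue that one may assume without loss of generality both the equality constraint and full allocation mass $\sum_{b}\gamma_{1,1}(b)=1$, then partitions the bid values into $B^{(-)}=\{b:D_1(b)<I_1\}$ and $B^{(+)}=\{b:D_1(b)>I_1\}$, treats separately the degenerate case where $D_1$ vanishes on $B^{(-)}$, and inside each case applies the same monotonicity estimates on $\int_{b}^{b^*}D_1(x)\,dx$ and $\int_{b^*}^{b}D_1(x)\,dx$ that you use, finally cancelling terms by means of the normalization $\sum_b \gamma_{1,1}(b)=1$. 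You fold that entire case analysis into a single per-bid supporting-line inequality
\[
D_1(b)\,b-\int_0^{b}D_1(x)\,dx \;\ge\; b^{*}D_1(b)-\int_0^{b^{*}}D_1(x)\,dx ,
\]
valid for every $b\ge 0$ by monotonicity alone, and you dispose of the unused allocation mass $1-W$ at the very end using only $\int_0^{b^*}D_1(x)\,dx\ge 0$, rather than via the reduction lemmas. This buys three things: the argument is shorter; it does not depend on Lemma~\ref{lemma:mixed_I_1} and Lemma~\ref{lemma:mixed_1}, whose own proofs carry side hypotheses and ``otherwise $s$ is not optimal'' detours that your route never needs; and it actually proves the stronger family of bounds $C(s)\ge I_1 c-\int_0^{c}D_1(x)\,dx$ for every constant $c\ge 0$, of which the stated bound is the best member (maximized at $c=b^*$, by the defining properties in Eq.~\eqref{eq:opt_b_1} and Eq.~\eqref{eq:opt_b_2}). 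What the paper's longer route buys in exchange is that its reduction lemmas record structural facts about near-optimal mixed strategies (exhausting the allocation mass, meeting the impression target exactly) which are reused in the multi-group extension, Proposition~\ref{prop:mixed_strategies}; your argument does not produce those by-products, but as a proof of this proposition it is complete and self-contained.
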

We provide a proof for Proposition~\ref{prop:single_opt_cost} in Appendix~\ref{app:single_opt_cost}. Proposition~\ref{prop:single_opt_cost} provides a lower-bound on the cost obtained by any mixed bidding strategy.

The next result shows that the bound of Proposition~\ref{prop:single_opt_cont} is tight. 
\begin{proposition}\label{prop:single_opt_alloc}
  Consider the special case where a single campaign $i=1$ gets impressions from a single targeting group $j=1$, and $I_1$ is the total number of impressions that the campaign is required to obtain.
Let the bid value $b^*$ be as given by Eq.~\eqref{eq:opt_b_1}~and~\eqref{eq:opt_b_2}, and suppose that $D^{(-)}(b^*) > 0$.
Furthermore, let the mixed bidding strategy $s(b_1) \in \setS$, $0 \leq b_1 < b^*$, be given by $\setp_1(s) = \{b_1,b^*\}$, 
as well as 
$$\gamma_{1,1}(b_1) = \frac{D_1(b^*) -I_1}{D_1(b^*) - D_1(b_1)} \; \mbox{ and } \;
\gamma_{1,1}(b^*) = 1- \gamma_{1,1}(b_1).$$
Then we have that
$$\lim_{b_1 \uparrow b^*} C(s(b_1)) =  I_1 b^* - \int_0^{b^*} D_1(b) db.$$
\end{proposition}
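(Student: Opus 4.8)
The plan is to prove this in two stages: first verify that $s(b_1)$ is a legitimate member of $\setS$ obtaining exactly $I_1$ impressions for every $b_1 \in [0,b^*)$, so that Proposition~\ref{prop:single_opt_cost} applies to it; then write $C(s(b_1))$ out in closed form and let $b_1 \uparrow b^*$. The whole argument is a direct calculation driven by the explicit form of the two weights, and the only genuinely delicate point is an algebraic cancellation in the limit together with the justification that the relevant one-sided limits behave as expected.

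First I would check feasibility. With $\gamma_{1,1}(b_1) = \frac{D_1(b^*)-I_1}{D_1(b^*)-D_1(b_1)}$ and $\gamma_{1,1}(b^*) = \frac{I_1 - D_1(b_1)}{D_1(b^*)-D_1(b_1)}$, both weights are nonnegative and sum to one: nonnegativity of $\gamma_{1,1}(b_1)$ uses $D_1(b^*)\ge I_1$ from Eq.~\eqref{eq:opt_b_1}, nonnegativity of $\gamma_{1,1}(b^*)$ uses $D_1(b_1) < I_1$ from Eq.~\eqref{eq:opt_b_2}, and the denominator is strictly positive since $b_1 < b^*$ forces $D_1(b_1) < I_1 \le D_1(b^*)$. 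Substituting into $\gamma_{1,1}(b_1)D_1(b_1) + \gamma_{1,1}(b^*)D_1(b^*)$ and simplifying the numerator, the cross products $D_1(b^*)D_1(b_1)$ cancel and one is left with $I_1\big(D_1(b^*)-D_1(b_1)\big)$ over $D_1(b^*)-D_1(b_1)$, i.e. exactly $I_1$. Hence $s(b_1)\in\setS$ meets the hypothesis of Proposition~\ref{prop:single_opt_cost}.

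Next I would evaluate the cost. By the definition of $C(s)$,
\[
C(s(b_1)) = \gamma_{1,1}(b_1)\Big[D_1(b_1)b_1 - \int_0^{b_1}D_1(x)dx\Big] + \gamma_{1,1}(b^*)\Big[D_1(b^*)b^* - \int_0^{b^*}D_1(x)dx\Big].
\]
As $b_1 \uparrow b^*$ I would use three facts: $b_1 \to b^*$; $D_1(b_1) \to D^{(-)}(b^*)$ by the very definition of the left limit; and $\int_0^{b_1}D_1(x)dx \to \int_0^{b^*}D_1(x)dx$ by continuity of the indefinite integral of the bounded, monotone (hence integrable) function $D_1$ on $[0,b^*]$. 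Since $D^{(-)}(b^*) < D_1(b^*)$, the two weights converge to the finite limits $\frac{D_1(b^*)-I_1}{D_1(b^*)-D^{(-)}(b^*)}$ and $\frac{I_1-D^{(-)}(b^*)}{D_1(b^*)-D^{(-)}(b^*)}$. Placing the two resulting terms over the common denominator $D_1(b^*)-D^{(-)}(b^*)$, the numerator simplifies: the cross $b^*$-terms $D_1(b^*)D^{(-)}(b^*)b^*$ cancel, the surviving $b^*$-terms combine to $I_1\big(D_1(b^*)-D^{(-)}(b^*)\big)b^*$, and the integral terms combine to $-\big(D_1(b^*)-D^{(-)}(b^*)\big)\int_0^{b^*}D_1(x)dx$. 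The common factor $D_1(b^*)-D^{(-)}(b^*)$ then cancels, leaving $I_1 b^* - \int_0^{b^*}D_1(x)dx$, as claimed.

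The main obstacle is purely bookkeeping in this cancellation step: one must track that the products involving $D^{(-)}(b^*)\!\int$, $D_1(b^*)\!\int$, and the two $b^*$ products combine to isolate precisely the factor $D_1(b^*)-D^{(-)}(b^*)$, which then divides out. A secondary point worth flagging is the role of the hypothesis $D^{(-)}(b^*)>0$: it is not actually needed for the limit identity itself (there, only $D^{(-)}(b^*)<D_1(b^*)$ is used, to keep the denominators positive), but it guarantees $\int_0^{b^*}D_1(x)dx>0$. Combined with $\gamma^*_{1,1}=I_1/D_1(b^*)<1$, this is exactly what makes the limiting cost $I_1 b^* - \int_0^{b^*}D_1(x)dx$ strictly smaller than the optimal pure-strategy cost of Lemma~\ref{lemma:single_opt_pure}, thereby demonstrating that pure strategies are suboptimal when the supply curve jumps over $I_1$.
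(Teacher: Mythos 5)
Your proof is correct and follows essentially the same route as the paper's: verify that the two weights are feasible and deliver exactly $I_1$ impressions, write out the two-term cost, and pass to the limit $b_1 \uparrow b^*$. The only (cosmetic) difference is order of operations — the paper first rearranges the cost via the feasibility identity so the error terms vanish by boundedness, whereas you take limits of each factor (using $D_1(b_1)\to D^{(-)}(b^*)$ and the jump $D^{(-)}(b^*)<D_1(b^*)$) and then cancel algebraically; both computations are valid, and your closing remark about the role of $D^{(-)}(b^*)>0$ is a correct observation.
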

We provide a proof for Proposition~\ref{prop:single_opt_alloc} in Appendix~\ref{app:single_opt_alloc}. Proposition~\ref{prop:single_opt_alloc} is that it implies that it is sufficient to consider mixed bidding strategies that use two bid values $b_1$ and $b^*$, $b_1 < b^*$, in order to obtain a (near) optimal cost.

\subsection{Cost Bound for Optimal Pure Strategy}
The above results imply that using mixed bidding strategies can lead to a strictly lower overall cost compared with pure strategies. The following result provides a bound on this cost difference.

\begin{proposition}\label{prop:single_cost_bound}
  Consider the special case where a single campaign $i=1$ gets impressions from a single targeting group $j=1$, and $I_1$ is the total number of impressions that the  campaign is required to obtain. Furthermore, let the bid value $b^*$  be as given by Eq.~\eqref{eq:opt_b_1}~and~\eqref{eq:opt_b_2}, and let $s^* \in \setSone$ be the optimal pure bidding strategy as given by Lemma~\ref{lemma:single_opt_pure}. Then we have that
\begin{eqnarray*}
  && C(s^*) -  \left [ I_1 b^* - \int_0^{b^*} D_1(b) db \right ] \\
  &\leq& (1 - \gamma^*_{1,1}) \int_0^{b^*} D_1(b^*) db \\
&\leq& \frac{D_1(b^*) - D^{(-)}_1(b^*)}{D_1(b^*)}  \int_0^{b^*} D_1(b) db,
\end{eqnarray*}
where $  D^{(-)}(b^*) = \lim_{b \uparrow b^*} D_1(b)$.
\end{proposition}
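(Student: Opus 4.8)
The plan is to handle the two displayed bounds separately. The first turns out to be an identity, and the second reduces to comparing two scalars, so the whole argument is elementary once the closed form from Lemma~\ref{lemma:single_opt_pure} is invoked.

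First I would substitute the cost of the optimal pure strategy, $C(s^*) = I_1 b^* - \gamma^*_{1,1}\int_0^{b^*} D_1(b)\,db$ from Lemma~\ref{lemma:single_opt_pure}, into the left-hand side $C(s^*) - [\, I_1 b^* - \int_0^{b^*} D_1(b)\,db \,]$. The two copies of $I_1 b^*$ cancel, and collecting the remaining integral terms gives exactly $(1-\gamma^*_{1,1})\int_0^{b^*} D_1(b)\,db$. Thus the first line holds with equality (the integrand being $D_1(b)$), and no estimation is needed at this stage.

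For the second inequality I would observe that $\int_0^{b^*} D_1(b)\,db \geq 0$, since $D_1$ is nonnegative, so it is a common nonnegative factor on both sides and it suffices to establish the scalar bound $1 - \gamma^*_{1,1} \leq \frac{D_1(b^*) - D^{(-)}_1(b^*)}{D_1(b^*)}$. Substituting $\gamma^*_{1,1} = I_1/D_1(b^*)$ rewrites the left side as $\frac{D_1(b^*) - I_1}{D_1(b^*)}$, and since $D_1(b^*) > 0$ by \eqref{eq:opt_b_1} the claim collapses to the single inequality $D^{(-)}_1(b^*) \leq I_1$.

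The one genuinely substantive point, then, is to verify $D^{(-)}_1(b^*) \leq I_1$, which I would read off directly from the defining property \eqref{eq:opt_b_2} of $b^*$: every $b$ with $0 \leq b < b^*$ satisfies $D_1(b) < I_1$, and taking the left limit $b \uparrow b^*$ preserves the (non-strict) inequality, so $D^{(-)}_1(b^*) = \lim_{b \uparrow b^*} D_1(b) \leq I_1$. I expect no real obstacle here; the only subtlety worth flagging is that the left limit of quantities each strictly below $I_1$ need only be $\leq I_1$, not $< I_1$, which is precisely the form the bound requires. Chaining the identity from the first step with this scalar comparison then yields the stated two-step inequality.
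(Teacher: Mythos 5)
Your proposal is correct and is essentially the paper's own argument: the paper gives no separate proof, stating only that the proposition "follows from Lemma~\ref{lemma:single_opt_pure}," and your calculation---substituting $C(s^*) = I_1 b^* - \gamma^*_{1,1}\int_0^{b^*} D_1(b)\,db$ from that lemma to obtain the first bound as an identity, then reducing the second bound to the scalar inequality $D^{(-)}_1(b^*) \leq I_1$, which follows by taking the left limit in Eq.~\eqref{eq:opt_b_2}---is precisely the omitted computation. Your reading of the integrand $D_1(b^*)$ in the middle term as a typo for $D_1(b)$ is also the right call, since under the literal reading the second inequality of the chain would in general fail.
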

Proposition~\ref{prop:single_cost_bound} follows from  Lemma~\ref{lemma:single_opt_pure}. Proposition~\ref{prop:single_cost_bound} states that the smaller the height (jump) $\Bbl D_1(b^*) - D^{(-)}_1(b^*) \Bbr$ 
of the discontinuity at $b^*$, the closer the performance of the optimal pure strategy will be to the lower-bound of Proposition~\ref{prop:single_opt_cost}.

\subsection{Discussion}
The results for case where a single campaign gets impressions from a single targeting group provide important insight into the structure of an optimal bidding strategy.
In particular, the results suggest that studying optimal policies under the assumption that the supply curve is continuous provides correct structure of an optimal bidding strategy.
More precisely, Proposition~\ref{prop:single_opt_cost} suggests that the optimal cost in the case of continuous supply curves provides the correct expression for the (tight) lower-bound of the cost of any mixed strategy for the case where the supply curves are not necessarily continuous.
In addition, the optimal value $b^*$ obtained in Proposition~\ref{prop:single_opt_cont} corresponds to the optimal bid value of a pure strategy as given in Lemma~\ref{lemma:single_opt_pure}, as well as one of the bid values of the general bid strategy given in Proposition~\ref{prop:single_opt_alloc}.

In the following we show that the results and insights obtained for the case of a single campaign getting impressions from a single targeting group carry over to the general case where multiple campaigns getting impressions from several targeting groups. To do that, we proceed as follows.
We first characterize in Section~\ref{sec:opt_solution} the structure of an optimal allocation $(\opt)$ under the assumption that the supply curves are given by continuous function. Using this result,  we propose in Section~\ref{sec:algorithm} an algorithm to compute an allocation $(\opt)$. In Section~\ref{sec:results} we show that this allocation is an optimal bidding strategy when the supply curves are given by continuous functions. In addition, we derive for this case a lower-bound on the optimal cost of any mixed bidding strategy, as well as mixed bidding strategies that either achieve the lower-bound, or can get arbitrarily close to it.




\section{Structural Properties}\label{sec:opt_solution}

In this section we characterize an optimal  solution $(\opt)$ for the optimization problem given by Eq.~\eqref{eq:opt_problem} under the simplifying assumption that the supply curves $D_j(b)$, $j \in \setT$, are given by continuous functions.


To do this, we use the following notation.
Let $(\opt)$ be a solution to the the optimization given by Eq.~\eqref{eq:opt_problem}. For each campaign $i \in \setN$  let the set $\setA_i(\optbg)$ be given by 
$$\setA_i(\optbg) = \{j \in \setA_i | \gamma^*_{i,j} D_j(b^*_{i,j}) > 0\}.$$
The set $\setA_i(\optbg)$ contains all the targeting groups $j$ from which a campaign receives a positive number  $\gamma^*_{i,j} D_j(b^*_{i,j})$ of impressions under the solution $(\opt)$. 

Similarly, for each targeting group $j \in \setT$ let the set $\setB_j(\optbg)$ be given by
$$\setB_j(\optbg) = \{i \in \setB_j |  \gamma^*_{i,j} D_j(b^*_{i,j}) > 0 \}.$$

When the have the result that there always exists an optimal solution $(\opt)$ for the optimization problem  given by Eq.~\eqref{eq:opt_problem} that consists of components
$$\setCp = (\setCpc, \setCpt)$$
where $\setCpc$ is a set of campaigns, and $\setCpt$ is a set of targeting groups, such that for each campaign $i \in \setCpc$ we have that
$$ b^*_{i,j} = p^*, \qquad j \in \setA_i(\optbg).$$

\begin{proposition}\label{prop:decomposition}
Suppose that the supply curves $D_j(b)$, $j \in \setT$, are given by continuous functions. Then there exists an optimal solution $(\opt)$ to the optimization problem given by Eq.~\eqref{eq:opt_problem} such that the following is true.
There exists a set of bid values $\setP$ such that with every bid value $p^* \in \setP$ we can associate a  component 
$$\setCp = (\setCpc, \setCpt)$$
where $\setCpc$ is a set of campaigns, and $\setCpt$ is a set of targeting groups,  such that
$$ \setA_i(\optbg) \cap \setCpt =  \setA_i(\optbg), \quad i \in \setCpc,$$
and
$$ \setB_j(\optbg) \cap \setCpc =  \setB_j(\optbg), \quad j \in \setCpt,$$
as well as 
$$ b^*_{i,j} = p^*, \qquad i \in \setCpc, j \in  \setA_i(\optbg).$$
In addition, we have that
$$\bigcup\limits_{p^* \in \setP} \setCpc = \setN$$
and
$$\bigcup\limits_{p^* \in \setP} \setCpt = \setT.$$
\end{proposition}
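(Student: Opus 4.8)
The plan is to show that \emph{some} optimal solution $(\opt)$ satisfies two local optimality (``equalization'') properties, and that these properties force the bipartite ``impression graph'' on $\setN \cup \setT$ to use a single bid value on each connected component; the components $\setCp=(\setCpc,\setCpt)$ are then taken to be exactly these connected components. Throughout I write $C_j(b)=D_j(b)b-\int_0^b D_j(x)\,dx$ for the cost of bidding $b$ in group $j$, and I form the graph $G$ on $\setN\cup\setT$ whose edges join $i$ and $j$ precisely when $\gamma^*_{i,j}D_j(b^*_{i,j})>0$, i.e.\ when $j\in\setA_i(\optbg)$ (equivalently $i\in\setB_j(\optbg)$). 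The first property is that each campaign uses one bid value across its active groups. Note the marginal cost of an impression at bid $b$ equals $b$, since $\frac{d}{db}C_j(b)=D_j'(b)\,b$ while the impression count $D_j(b)$ has derivative $D_j'(b)$. Writing the Lagrangian of Eq.~\eqref{eq:opt_problem} with multipliers $\lambda_i\ge 0$ for the impression constraints and $\mu_j\ge 0$ for the supply constraints, stationarity in $b_{i,j}$ on an active edge gives $\gamma^*_{i,j}D_j'(b^*_{i,j})\,(b^*_{i,j}-\lambda_i)=0$, hence $b^*_{i,j}=\lambda_i$ for all $j\in\setA_i(\optbg)$. Equivalently this is a one-line exchange argument: a campaign drawing impressions from two groups at bids $p_1<p_2$ could shift a marginal impression from the $p_2$-group to the $p_1$-group, strictly lowering cost while preserving feasibility.

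The second property ties campaigns together through shared groups. Substituting $b^*_{i,j}=\lambda_i$ into the stationarity condition in $\gamma_{i,j}$, namely $C_j(b^*_{i,j})-\lambda_i D_j(b^*_{i,j})+\mu_j=0$, the $D_j(\lambda_i)\lambda_i$ terms cancel and leave $\mu_j=\int_0^{\lambda_i}D_j(b)\,db$ on every active edge $(i,j)$. Thus if campaigns $i_1,i_2$ are both active in a common group $j$, then $\int_0^{\lambda_{i_1}}D_j(b)\,db=\int_0^{\lambda_{i_2}}D_j(b)\,db$. Since the edge is active we have $D_j(\lambda_{i_1})>0$, and $D_j$ is non-decreasing, so assuming $\lambda_{i_1}\le\lambda_{i_2}$ yields $\int_{\lambda_{i_1}}^{\lambda_{i_2}}D_j(b)\,db\ge D_j(\lambda_{i_1})(\lambda_{i_2}-\lambda_{i_1})\ge 0$, with equality only if $\lambda_{i_1}=\lambda_{i_2}$. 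Hence campaigns sharing an active group must use identical bid values; the strict positivity $D_j(\lambda_{i})>0$ on active edges is exactly what makes this step work.

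Combining the two properties, $\lambda_i=\lambda_{i'}$ whenever $i,i'$ lie in the same connected component of $G$, so $b^*_{i,j}$ is constant on each component; call this value $p^*$. I then define $\setCpc$ and $\setCpt$ to be the campaigns and targeting groups of the component associated with $p^*$, and let $\setP$ collect these values. The containments $\setA_i(\optbg)\cap\setCpt=\setA_i(\optbg)$ for $i\in\setCpc$ and $\setB_j(\optbg)\cap\setCpc=\setB_j(\optbg)$ for $j\in\setCpt$ are immediate, since they merely assert that each component is closed under the edges of $G$; and $\bigcup_{p^*\in\setP}\setCpc=\setN$, $\bigcup_{p^*\in\setP}\setCpt=\setT$ hold because every vertex lies in some component. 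The lone indexing nuisance is that two distinct components might share the same value $p^*$; I would handle this by merging their vertex sets (the conclusion only constrains the unions over $\setP$) or by a harmless perturbation.

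The main obstacle is that a continuous supply curve need not be differentiable, so the derivative-based stationarity relations above are not literally available. The fix I would use is to observe that the per-group cost as a function of impressions, $\Phi_j(x)=C_j\!\big(D_j^{-1}(x)\big)$, is convex (its slope $D_j^{-1}(x)$ is non-decreasing), and that $\gamma_{i,j}\,\Phi_j(x_{i,j}/\gamma_{i,j})$, with $x_{i,j}$ the impressions on edge $(i,j)$, is its perspective and hence jointly convex. Rewriting Eq.~\eqref{eq:opt_problem} in the variables $(x_{i,j},\gamma_{i,j})$ therefore gives a genuinely convex program, and the two equalization conclusions follow from subgradient optimality conditions with $b^*_{i,j}$ playing the role of a subgradient of $\Phi_j$ rather than a classical derivative. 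Equivalently, and this is the route I would actually write up to stay elementary and robust to flat pieces where $D_j=0$, I would replace every appeal to derivatives by the finite exchange arguments indicated above, invoking only monotonicity, right-continuity, and continuity of $D_j$; continuity is precisely what guarantees a bid producing any required impression level exists, as already noted after Proposition~\ref{prop:single_opt_cont}.
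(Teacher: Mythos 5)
Your proposal is correct in substance but follows a genuinely different route from the paper. The paper never invokes Lagrangian duality or convexity theory: it proves your two equalization properties by bare-hands exchange arguments (Propositions~\ref{prop:q_j}~and~\ref{prop:p_i} in the appendix). There, if two campaigns active in a common group, or one campaign active in two groups, operate at different supply levels, a strictly cheaper feasible allocation is constructed by moving both bids to a common intermediate value whose existence follows from the intermediate value theorem (this is exactly where continuity enters); in the shared-group case the fractions $\gamma$ are re-split proportionally so that $\sum_i \gamma_{i,j} \leq 1$ is preserved. Corollaries~\ref{cor:b_j}~and~\ref{cor:b_i} then yield an optimal solution in which each campaign uses a single bid value, and the paper's proof of Proposition~\ref{prop:decomposition} finishes with an iterative closure construction that propagates one bid value through each connected component of the active bipartite graph --- the same component structure you identify. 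Your convex reformulation in the variables $(x_{i,j},\gamma_{i,j})$ via perspective functions is a legitimate and arguably more illuminating alternative: it exhibits the common bid value as the multiplier $\lambda_i$ of the impression constraint, and the identity $\mu_j=\int_0^{\lambda_i}D_j(b)\,db$ on active edges delivers the shared-group equalization in one stroke, where the paper needs a separate and more delicate exchange. What the paper's route buys is self-containedness: no constraint qualification, no subdifferential calculus, and no need to verify that the reformulated cost is exact.

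Three loose ends in your write-up would need to be closed. First, the conclusion ``$b^*_{i,j}=\lambda_i$'' is literally false when $D_j$ has a flat piece at a \emph{positive} level through $b^*_{i,j}$ (not only where $D_j=0$, the only case you mention): stationarity in its subgradient form gives $D_j(b^*_{i,j})=D_j(\lambda_i)$, and you must add the step --- easy but necessary --- that resetting every active bid to $\lambda_i$ changes neither impressions nor cost, which is what produces the optimal solution whose \emph{existence} the proposition asserts; this is precisely the distinction the paper draws between Proposition~\ref{prop:q_j} (equal supply levels) and Corollary~\ref{cor:b_j} (equal bids in \emph{some} optimal solution). Second, KKT necessity requires a constraint qualification; the constraints of your reformulation are linear, but the perspective objective is extended-valued at $\gamma_{i,j}=0$, so you should restrict attention to the active support of the given optimum or verify a relative-interior condition --- the paper's bare feasibility assumption does not hand you this for free. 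Third, your fallback ``finite exchange'' route is only actually sketched for the one-campaign-two-groups property; the shared-group property is the harder exchange (it requires re-splitting the $\gamma$'s, as in the paper's proof of Proposition~\ref{prop:q_j}), so as written the convex route is not optional in your proposal but essential.
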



\section{Algorithm to Compute $(\opt)$}\label{sec:algorithm}
Using the results of Section~\ref{sec:opt_solution}, we present in this section an algorithm to compute a  solution $(\opt)$ for the optimization problem given by Eq.~\eqref{eq:opt_problem}. In the next section, we show that the solution $(\opt)$  is an optimal solution for the case where the supply curves are given by continuous functions. In addition, we derive a lower-bound on the optimal cost of any bidding strategy, as well as mixed bidding strategies that either achieve the lower-bound, or can get arbitrarily close to it. 

\subsection{Algorithm}\label{ssec:algo_def}

Using the result of Proposition~\ref{prop:decomposition}, we derive an algorithm that  computes an allocation $(\opt)$ that consists of  components  as given in Proposition~\ref{prop:decomposition}. To do that, the algorithm recursively computes components $\setDp$,
where each component consists of a set of campaigns $\setDpc \subseteq \setN$ and a set of targeting groups $\setDpt \subseteq \setT$. Using these components, we then construct an allocation $(\opt)$ as given in Proposition~\ref{prop:decomposition}.

More precisely, let
$$ \setC_c = \setN \mbox{ and } \setC_t = \setT$$
be the initial set of campaigns and targeting groups, and let
$$\setp = \emptyset.$$
We then split the set $(\setC_c,\setC_t) = (\setN,\setT)$ into two subsets $(\setC^1_c,\setC^1_t)$ and $(\setC^2_c,\setC^2_t)$ as follows.

We first compute the unique bid value $p$ such that
\begin{equation}\label{eq:price1}
  \sum_{j \in \setC_t} D_j(p) \geq \sum_{i \in \setC_c} I_i
\end{equation}  
and
\begin{equation}\label{eq:price2}
  \sum_{j \in \setC_t} D_j(b) < \sum_{i \in \setC_c} I_i, \qquad  0 \leq b <p.
\end{equation}  

Using the bid value $p$, we next compute an optimal solution to the optimization problem $ALLOC(\setC_c,\setC_t,p)$ given by 
\begin{equation}\label{eq:opt_alloc}
\begin{aligned}
& \underset{\alloct}{\text{minimize}}
& & \sum_{i \in \setC_c} \bigg( I_i - \sum_{j \in A_i \cap \setC_t} t_{i,j} D_j(p) \bigg)^2\\
& \text{subject to}
& & \sum_{i \in \setB_j \cap \setC_c} t_{i,j} \leq 1, \; j \in \setC_t, \\
& & & 0 \leq t_{i,j}, \; i \in \setC_c, j \in \setC_t.\\
\end{aligned}
\end{equation}
Note that the optimization problem $ALLOC(\setC_c,\setC_t,p)$ is a convex (quadratic) optimization problem, and can easily been solved using standard algorithms (solvers) of  quadratic programming.

If the minimization problem given by Eq.~\eqref{eq:opt_alloc} has an optimal solution $\optt$ with cost zero, then we stop the recursion, and return the component $\setDp$ where 
$$\alloct = \optt$$
is equal to the optimal solution $\optt$ for the   optimization problem $ALLOC(\setC_c,\setC_t,p)$, and $p$ is the bid value that we used to in the optimization problem $ALLOC(\setC_c,\setC_t,p)$.  In addition, we add the bid value $p$ to the set $\setp$ and let 
$$ \setp = \setp \cup \{p\}.$$

On the other hand, if the  minimization problem given by Eq.~\eqref{eq:opt_alloc} has an optimal solution $\optt$ with a strictly positive cost, then we split the set  $(\setC_c,\setC_t)$ into two sets s $(\setC^1_c,\setC^1_t)$ and $(\setC^2_c,\setC^2_t)$ as follows. We set
\begin{equation}\label{eq:split1}
  \setC_c^{(1)} = \left \{ i \in \setC_c \left | \bigg( I_i - \sum_{j \in \setA_i \cap \setC_t} t^*_{i,j} D_j(p) \bigg)^2 > 0 \right . \right \},
\end{equation}
and
\begin{equation}\label{eq:split2}
  \setC_t^{(1)} = \union_{i \in \setC_c^{(1)}} \setA_i.
\end{equation}
Furthermore, we set
\begin{equation}\label{eq:split3}
  \setC_c^{(2)} =  \setC_c \backslash  \setC_c^{(1)}
\mbox { and } 
 \setC_t^{(2)} =  \setC_t \backslash  \setC_t^{(1)}.
\end{equation}
For each of the two components we then recursively apply the above step until all recursions stop.

More formally we define the algorithm as follows. Let the function
$$p = PRICE(\setC_c,\setC_t)$$
return the unique price $p^*$ as defined by Eq.~\eqref{eq:price1}~and~Eq.~\eqref{eq:price2}.
Furthermore let the function
$$(\optt,done) = ALLOC(\setC_c,\setC_t,p)$$
return an optimal solution to the optimization problem given by Eq~\eqref{eq:opt_alloc},
as well as a  boolean value $done = TRUE$
if the optimal solution $\optt$ for the optimization problem  by Eq~\eqref{eq:opt_alloc} has a cost equal to 0, i.e. we have that
$$\sum_{i \in \setC_c} \bigg( I_i - \sum_{j \in A_i \cap \setC_t} t^*_{i,j} D_j(p) \bigg)^2 = 0,$$
and a boolean value $done = FALSE$ otherwise.
Finally let the function
$$((\setCcone,\setCtone),(\setCctwo,\setCttwo)) = SPLIT(C_c, C_t, t^*, p)$$
return the sets $(\setCcone,\setCtone)$ and  $(\setCcone,\setCtone)$, as given by Eq.~\eqref{eq:split1}~-~\eqref{eq:split3}.

\begin{algorithm}
    \SetKwInOut{Input}{Input}
    \SetKwInOut{Output}{Output}

    $p$ = \textit{PRICE}($C_c, C_t$)\;
    $(\optt,done)$ = \textit{ALLOC}($C_c, C_t, p$)\;

    \eIf{done}{
      $\setC_{out} = \setC_{in} \cup \{(C_c, C_t, t^*,p)\}$\;
      $\setp_{out} = \setp_{in} \cup \{p\})$\;
        return $(\setC_{out}, \setp_{out})$\;
    }{
        $((\setCcone,\setCtone),(\setCctwo,\setCttwo)) = SPLIT(C_c, C_t, t^*, p)$\;
        $(\setC_{out,1}, \setp_{out,1}) = REC(\setCcone, \setCtone, \setC_{in}, \setp_{in})$\;
        $(\setC_{out,2}, \setp_{out,2}) = REC(\setCctwo, \setCttwo, \setC_{in}, \setp_{in})$\;
        $\setC_{out} = \setC_{out,1} \cup  \setC_{out,2}$\;
      $\setp_{out} = \setp_{out,1} \cup \setp_{out,2} $\;
        return $(\setC_{out}, \setp_{out})$\;
    }
\caption{$(\setC_{out},\setp_{out})$ = \textit{REC} $(\setC_c, \setC_t, \setC_{in}, \setp_{in})$ }\label{algo:rec}
\end{algorithm}

 Using these functions, the pseudo-code for the recursion step
$$(\setC_{out},\setp_{out}) = REC(\setC_c, \setC_t, \setC_{in}, \setp_{in})$$
 is then given by Algorithm~\ref{algo:rec}. To simplify the notation, it is assumed that the set $\setA_i$, $i \in \setN$, and $\setB_j$, $j \in \setT$, are available to the function  $REC(\setC_c, \setC_t, \setC_{in}, \setp_{in})$ given in Algorithm~\ref{algo:rec} as global (globally defined) variables.

 Using the recursive function $ REC(\setC_c, \setC_t, \setC_{in}, \setp_{in})$, the algorithm to compute the set $\{\setDp\}_{p \in \setp}$ of components is given by
\begin{equation}\label{eq:algo}
  (\setC_{out},\setp) = REC(\setN,\setT, \emptyset, \emptyset)
\end{equation}
where $\setp$ is the set of bid values of the components $\setDp$, and 
$$ \setC_{out} = \{\setDp\}_{p \in \setp}$$
is the set of components obtained by the algorithm.

\subsection{Construction of Allocation $(\opt)$}\label{ssec:construction_alloc}

Using the set $\setp$ of bid values, and the components $\setDp\}$, $p \in \setp$, obtained by the algorithm given by Eq~\eqref{eq:algo}, we construct a  solution $(\opt)$ as given by Proposition~\ref{prop:decomposition} as follows. Let
$$ \setP = \setp,$$
and for each component  $\setDp$, $p \in \setp$, create a component $\setCp$ with $p^* = p$,
and
$$\setCpc = \setDpc \mbox{ and } \setCpt = \setDpt.$$
Finally using the component $\setDp$ with $p = p^*$, the allocation  for component $\setCp$, $p^* \in \setP$, is given by
$$b^*_{i,j} = p^*, \qquad i \in \setCpc, j \in \setCpt,$$
as well as
$$\gamma^*_{i,j} = t_{i,j}, \qquad i \in \setCpc, j \in \setCpt$$
and
$$\gamma^*_{i,j} = 0, \qquad i \in \setCpc, j \notin \setCpt.$$

\section{Results}\label{sec:results}
In this section we analyze the allocation $(\opt)$ obtained by the algorithm in Section~\ref{sec:algorithm}, and present our main results.  In particular  we show in the next subsection that if the supply curves are given by contiguous function, then the allocation $(\opt)$ obtained by the algorithm is an optimal solution to the optimization problem given by Eq.~\eqref{eq:opt_problem}. In Subsection~\ref{ssec:opt_cost} we use the allocation $(\opt)$ obtained by the algorithm to derive a lower-bound for the cost of any mixed bidding strategy for the the optimization problem  given by Eq.~\eqref{eq:opt_problem}, and derive a sequence of mixed bidding strategies that achieves a cost that is equal, or arbitrarily close, to this lower-bound.

We use the following notation to state our results. Let the  allocation $(\opt)$ with components $\setCp$, $p^* \in \setP$,
be the solution to the  allocation obtained by the algorithm in Section~\ref{sec:algorithm}. Then  for all bid values $p^* \in \setP$
let
$$D_{p^*}(b) = \sum_{j \in \setCpt} D_j(b), \qquad  b \geq 0,$$
and
$$I_{p^*} =  \sum_{i \in \setCpc} I_i.$$

\subsection{Continuous Supply Curves}\label{ssec:opt_cont}
We first analyze the allocation $(\opt)$ obtained by the algorithm presented in Section~\ref{sec:algorithm} for the case where the supply curves are given by continuous functions, and show that in this case  allocation $(\opt)$ is an optimal allocation.

\begin{proposition}\label{prop:opt_cont}
Let $(\opt)$ with components $\setCp$, $p^* \in \setP$, be the allocation obtained by the algorithm in Section~\ref{sec:algorithm}. 
If the supply curves $D_j(b)$ are given by continuous functions, then $(\opt)$ is an optimal solution for the optimization problem given by Eq.~\eqref{eq:opt_problem}, with cost
$$\sum_{p^* \in \setP} \left [ I_{p^*} p^* - \int_0^{p^*} D_{p^*}(b) db \right ].$$
\end{proposition}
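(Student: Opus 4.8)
The plan is to establish two matching bounds: that the allocation $(\opt)$ returned by the algorithm is feasible and attains the stated cost, and that no feasible solution of Eq.~\eqref{eq:opt_problem} can achieve a smaller cost. Throughout I would write $g_j(b) = D_j(b)b - \int_0^b D_j(x)\,dx = \int_0^b x\,dD_j(x)$ for the per-group cost, and for each campaign $i$ and targeting group $j$ let $p^*_i$ and $p^*_j$ denote the bid value $p^*$ of the component $\setCp$ to which they are assigned by the construction of Section~\ref{ssec:construction_alloc}.

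\textbf{Achievability.} First I would record feasibility: at a leaf of the recursion the termination test $done = TRUE$ makes the quadratic objective of Eq.~\eqref{eq:opt_alloc} vanish term by term, so $\sum_{j\in\setCpt}\gamma^*_{i,j}D_j(p^*) = I_i$ for each $i\in\setCpc$ (the demand constraints hold with equality), while the capacity constraints $\sum_{i}\gamma^*_{i,j}\le 1$ are exactly those of Eq.~\eqref{eq:opt_alloc}. Summing $\gamma^*_{i,j}g_j(p^*)$ over $i\in\setCpc$ and $j\in\setCpt$ and substituting $\sum_{j}\gamma^*_{i,j}D_j(p^*)=I_i$ gives the contribution $p^* I_{p^*} - \sum_{j\in\setCpt} s_j\int_0^{p^*}D_j(b)\,db$, where $s_j=\sum_{i\in\setCpc}\gamma^*_{i,j}$. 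The decisive point is that continuity of $D_j$ forces the defining inequalities Eq.~\eqref{eq:price1}~and~\eqref{eq:price2} of $PRICE(\setCpc,\setCpt)$ to hold with equality at $p^*$, i.e.\ $D_{p^*}(p^*)=I_{p^*}$; comparing with the total $\sum_{j}s_j D_j(p^*)=I_{p^*}$ obtained at the leaf yields $\sum_{j}(1-s_j)D_j(p^*)=0$, so $s_j=1$ whenever $D_j(p^*)>0$ (and $\int_0^{p^*}D_j=0$ otherwise). Hence each component contributes exactly $I_{p^*}p^* - \int_0^{p^*}D_{p^*}(b)\,db$, and summing over $\setP$ gives the claimed cost.

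\textbf{Lower bound.} For the matching lower bound I would avoid any convexity hypothesis and argue by a pointwise ``dual'' inequality. For every bid value $b$ and every $\lambda\ge 0$,
$$g_j(b)-\lambda D_j(b)=\int_0^b (x-\lambda)\,dD_j(x)\ \ge\ \int_0^\lambda (x-\lambda)\,dD_j(x)=-\int_0^\lambda D_j(x)\,dx,$$
since the integrand $(x-\lambda)$ is negative precisely on $[0,\lambda)$ and $D_j$ is non-decreasing. Given any feasible $(\alloc)$, I would apply this with $\lambda=p^*_i$ to each term $\gamma_{i,j}g_j(b_{i,j})$, sum, and invoke the constraints of Eq.~\eqref{eq:opt_problem}: the demand constraints (with $p^*_i\ge 0$) give $\sum_{i,j}\gamma_{i,j}p^*_i D_j(b_{i,j})\ge\sum_i p^*_i I_i$, while the capacity constraints give $\sum_{i,j}\gamma_{i,j}\int_0^{p^*_i}D_j\le\sum_j\int_0^{p^*_j}D_j$. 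Together these yield
$$\sum_{i,j}\gamma_{i,j}g_j(b_{i,j})\ \ge\ \sum_{i\in\setN} p^*_i I_i-\sum_{j\in\setT}\int_0^{p^*_j}D_j(b)\,db=\sum_{p^*\in\setP}\Big[I_{p^*}p^*-\int_0^{p^*}D_{p^*}(b)\,db\Big],$$
which equals the achieved cost, proving optimality.

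\textbf{Main obstacle.} The step needing genuine work is the structural fact used in the capacity estimate: for every group $j$ and every campaign $i\in\setB_j$ that can target it, the component prices satisfy $p^*_i\le p^*_j$ (so $\int_0^{p^*_i}D_j\le\int_0^{p^*_j}D_j$), with equality attained by some $i\in\setB_j$. I would prove this by induction on the recursion tree of Algorithm~\ref{algo:rec}. At each split, $\setC_t^{(1)}=\bigcup_{i\in\setC_c^{(1)}}\setA_i$ collects every group reachable by a deficient campaign, so any group remaining in the lower-price branch $\setC_t^{(2)}$ is unreachable from the higher-price branch; hence a campaign targeting $j$ can only lie in $j$'s own branch or in a branch that subsequently receives a no-larger price, giving $p^*_i\le p^*_j$, while the relation $s_j=1$ established above guarantees that some campaign of $j$'s own component (with price $p^*_j$) belongs to $\setB_j$. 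Verifying that this invariant persists through the recursion, and treating groups with $D_j(p^*_j)=0$ separately (where $\int_0^{p^*_j}D_j=0$ makes the estimate trivial), is the crux; the remaining inequalities follow immediately from monotonicity of $D_j$ and the constraints of Eq.~\eqref{eq:opt_problem}.
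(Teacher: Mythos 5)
Your proposal is correct, and its lower-bound half takes a genuinely different route from the paper's. The paper proves Proposition~\ref{prop:opt_cont} by verifying (via an induction over components ordered by increasing price) that the algorithm's output satisfies the sufficient optimality conditions of Proposition~\ref{prop:sufficient_cond_app}; those conditions in turn rest on a per-component lower bound for arbitrary strategies (Proposition~\ref{prop:mixed_strategies}, proved by its own induction over targeting groups) plus an exchange argument showing that shifting $\Delta_I$ impressions between components, or into the zero-supply groups, can never pay. You replace that entire apparatus by the single pointwise inequality $g_j(b)-\lambda D_j(b)\ge -\int_0^\lambda D_j(x)\,dx$ (which is valid: $\bigl(b-\lambda\bigr)D_j(b)-\int_\lambda^b D_j(x)\,dx\ge 0$ for $b\ge\lambda$ and $\ge 0$ for $b<\lambda$ by monotonicity alone), applied with the component prices $p^*_i$ as dual multipliers, so that the demand and capacity constraints of Eq.~\eqref{eq:opt_problem} finish the lower bound in two lines. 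This is shorter, exposes the LP-duality/complementary-slackness structure that is implicit in the paper, and—because the inequality needs only right-continuity and monotonicity—it also delivers the general-supply-curve bound of Proposition~\ref{prop:opt_cost} essentially for free (apply it to every bid value $b\in\setp_{i,j}$ of a mixed strategy), whereas the paper re-runs the exchange argument there. Your achievability half (demand equalities from $done=TRUE$ in Eq.~\eqref{eq:opt_alloc}, $D_{p^*}(p^*)=I_{p^*}$ from continuity and Eqs.~\eqref{eq:price1}--\eqref{eq:price2}, hence $\sum_i\gamma^*_{i,j}=1$ whenever $D_j(p^*)>0$) matches the paper's in substance.

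The caveat is that both routes stand on the same structural facts about the recursion, and that is where the real remaining work sits in your plan. Your invariant $p^*_i\le p^*_j$ for every $i\in\setB_j$ is exactly the combination of Lemma~\ref{lemma:algo_properties} (a deficient campaign drags all of its groups into the high-price branch, so a group left in the low-price branch is unreachable from the high-price branch) with the two unnumbered lemmas of Appendix~D asserting that every component generated from the branch $(\setCctwo,\setCttwo)$ has price $q\le p$ while every component generated from $(\setCcone,\setCtone)$ has price $q>p$. These price-ordering statements are not automatic: the paper proves them by induction along the recursion tree, propagating feasibility of the $ALLOC$ subproblems at price $p$ through an ordering of the child components, and Lemma~\ref{lemma:algo_properties} itself comes from the first-order conditions of the quadratic program. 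Your sketch correctly locates this as the crux and gives the right mechanism, but it does not discharge it; a complete write-up would need to reproduce roughly that induction, so the net effect of your approach is to relocate the technical work into the structural lemmas rather than eliminate it.
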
  
We provide a proof for Proposition~\ref{prop:opt_cont} in Appendix~\ref{app:opt_cont}.

\subsection{General Supply Curves}\label{ssec:opt_cost}
We next consider the general case where the supply curves are not necessarily given by continuous functions. For this case, we  consider mixed strategies in addition to pure strategy. Under a mixed strategy  campaign $i$ can use more than one bid value when bidding for impression of a targeting group $j \in \setA_i$. More precisely, for a mixed strategy $s$ we let $\setp_{i,j}$ be the set of bid values that campaign $i$  uses at targeting group $j$, and let $\gamma_{i,j}(b)$, $b \in \setp_{i,j}(s)$, be the fraction of bid requests at targeting group $j$ for which campaign $i$ uses the bid value $b \in \setp_{i,j}$. Let $\setS$ be the set of all such mixed strategies.

In the following we say that a mixed strategy $s \in \setS$  is a solution to the optimization problem given by Eq.~\ref{eq:opt_problem}, if we have that
$$\sum_{j \in \setA_i} \sum_{b \in \setp_{i,j}} \gamma_{i,j}(b) D_j(b) \geq I_i, \qquad i \in \setN, $$
and
$$\sum_{i \in \setB_j} \sum_{b \in \setp_{i,j}} \gamma_{i,j}(b) \leq 1, \qquad j \in \setT.$$

The cost $C(s)$ of a mixed strategy that is a solution for the optimization problem Eq.~\ref{eq:opt_problem} is given by
$$C(s) =  \sum_{i=1}^{N_c}\left [\sum_{j=1}^{N_r} \sum_{b \in \setp_{i,j}}\gamma_{i,j}(b) \left ( D_j(b)b - \int_0^{b}D_{j}(b)db\right )\right ].$$

We then have the following result.
\begin{proposition}\label{prop:opt_cost}
  Let $s \in \setS$ be a mixed strategy that is a solution to the optimization problem given by Eq.~\ref{eq:opt_problem}.
Then we have that
$$ C(s) \geq \sum_{p^* \in \setP} \left [ I_{p^*} p^* - \int_0^{p^*} D_{p^*}(b) db \right ].$$
\end{proposition}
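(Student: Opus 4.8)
The plan is to reduce the multi-campaign, multi-targeting-group lower bound to the single-targeting-group bound of Proposition~\ref{prop:single_opt_cost}, applied once to each component $\setCp$, $p^* \in \setP$. The key structural facts I would exploit are (i) that the components partition both $\setN$ and $\setT$ (from Proposition~\ref{prop:decomposition} and the construction in Section~\ref{ssec:construction_alloc}), so the cost $C(s)$ splits as a sum over components, and (ii) that within a component the price $p^*$ is defined by Eq.~\eqref{eq:price1}~and~\eqref{eq:price2} as the threshold at which the aggregate supply $D_{p^*}(b) = \sum_{j \in \setCpt} D_j(b)$ first meets the aggregate demand $I_{p^*} = \sum_{i \in \setCpc} I_i$. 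The idea is that each component behaves exactly like a single ``aggregate campaign'' bidding on a single ``aggregate targeting group'' with supply curve $D_{p^*}$ and demand $I_{p^*}$, so the single-group bound gives the desired term.

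First I would fix a mixed strategy $s \in \setS$ that is feasible for Eq.~\eqref{eq:opt_problem}, and rewrite its cost by grouping the campaigns according to which component they belong to; since every targeting group $j \in \setT$ lies in exactly one $\setCpt$ and every campaign in exactly one $\setCpc$, and since under a feasible $s$ a campaign only spends on groups in $\setA_i \subseteq \setCpt$ for its own component, the double sum defining $C(s)$ decomposes cleanly as $C(s) = \sum_{p^* \in \setP} C_{p^*}(s)$, where $C_{p^*}(s)$ collects all the cost terms from campaigns in $\setCpc$ bidding on groups in $\setCpt$. Next, for a fixed component I would define an aggregate supply curve $D_{p^*}(b)$ and aggregate demand $I_{p^*}$ as in the statement, and observe that the total impressions the strategy obtains within the component is at least $I_{p^*}$ (summing the per-campaign feasibility constraints over $i \in \setCpc$). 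The cost $C_{p^*}(s)$ is a sum of per-group terms $\gamma_{i,j}(b)[D_j(b)b - \int_0^b D_j(x)dx]$; I would argue that, because the integrand $D_j(b)b - \int_0^b D_j$ is itself the cost of buying $D_j(b)$ units at price $b$ and these costs are additive across the disjoint groups, the aggregate cost within the component is bounded below by the cost of an equivalent single-curve mixed strategy against $D_{p^*}$ that buys the same total number $\geq I_{p^*}$ of impressions. Applying Proposition~\ref{prop:single_opt_cost} with $D_1 \leftarrow D_{p^*}$, $I_1 \leftarrow I_{p^*}$, and the threshold $b^* \leftarrow p^*$ (which by Eq.~\eqref{eq:price1}~and~\eqref{eq:price2} is precisely the $b^*$ of that proposition for the aggregate curve) then yields $C_{p^*}(s) \geq I_{p^*} p^* - \int_0^{p^*} D_{p^*}(b)\,db$, and summing over $p^* \in \setP$ gives the claim.

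The main obstacle I expect is the middle step: relating the component cost $C_{p^*}(s)$, which is a sum over \emph{individual} targeting groups each with its own curve $D_j$ and its own per-bid fractions $\gamma_{i,j}(b)$, to a single mixed strategy on the \emph{aggregate} curve $D_{p^*}$. The subtlety is that a campaign may split its allocation across several groups and several bid values, and the feasibility constraint $\sum_{i \in \setB_j}\sum_b \gamma_{i,j}(b) \leq 1$ is per-group rather than aggregate, so one must check that aggregating preserves both the total impression count and a valid bound on cost. I would handle this by noting that the per-group cost function $b \mapsto D_j(b)b - \int_0^b D_j(x)\,dx = \int_0^b x\,dD_j(x)$ is exactly the integral of price against the supply increment, so summing over groups gives $\int_0^b x\,dD_{p^*}(x)$, and the convexity-free argument underlying Proposition~\ref{prop:single_opt_cost} (that buying below the threshold $p^*$ is never cheaper per impression once the required volume must be met) applies verbatim to the aggregated curve. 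Care is needed to confirm that the aggregate fractions remain bounded appropriately and that no cross-component spending can lower the bound, but these follow from the disjointness guaranteed by Proposition~\ref{prop:decomposition}.
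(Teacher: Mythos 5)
Your opening step---treating each component $\setCp$ as a single aggregate campaign with demand $I_{p^*}$ facing the aggregate curve $D_{p^*}(b)=\sum_{j\in\setCpt}D_j(b)$, and invoking the single-group bound at the threshold $p^*$---is indeed how the paper proceeds; this is exactly the role of Proposition~\ref{prop:mixed_strategies} (which, note, cannot be obtained just by ``summing the integrals,'' since a mixed strategy may use different bid values and fractions on different groups; the paper proves it by an induction over the groups comparing per-group thresholds with the aggregate threshold). The genuine gap is in your reduction of $C(s)$ to a sum of within-component costs. You assert that under a feasible $s$ each campaign $i\in\setCpc$ only spends on groups in $\setA_i\subseteq\setCpt$, but this inclusion is false: Proposition~\ref{prop:decomposition} only guarantees $\setA_i(\optbg)\subseteq\setCpt$, i.e.\ the groups that the \emph{optimal allocation} actually uses lie inside the component, not the full admissible set $\setA_i$. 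Because targeting criteria overlap, a campaign in $\setCpc$ may legitimately bid on groups belonging to a component with a \emph{higher} bid value $q^*>p^*$, or on groups in $\setT_0$ that the optimal allocation leaves unused; the disjointness coming from the algorithm (Lemma~\ref{lemma:algo_properties}) is one-sided and only excludes groups of \emph{lower}-priced components. Consequently, for an arbitrary feasible mixed strategy, neither your clean decomposition $C(s)=\sum_{p^*\in\setP}C_{p^*}(s)$ nor your claim that each component internally supplies at least $I_{p^*}$ impressions is valid.

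This cross-component leakage is precisely what the bulk of the paper's proof is devoted to, and it does not ``follow from disjointness.'' The paper orders the components by bid value and argues by induction starting from the highest price: if a strategy diverts $\Delta_I$ impressions away from component $\setCp$, the per-component lower bound drops by at most $\Delta_I p^*$ (a calculation using the threshold $p'$ with $D_{p^*}(p')\geq I_{p^*}-\Delta_I$), whereas acquiring those impressions elsewhere costs at least $\Delta_I p^*$ if they are taken from $\setT_0$ (using $D_0(p^*)=0$, so that supply there only exists at prices above $p^*$), and at least $\Delta_I q^*>\Delta_I p^*$ if they are taken from a higher-priced component $\setCq$. Without this argument, or a substitute for it, your proof only covers strategies that happen to respect the component structure, which is strictly smaller than the class of feasible mixed strategies the proposition quantifies over.
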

We provide a proof for Proposition~\ref{prop:opt_cost} in Appendix~\ref{app:opt_cost}. 
Similar to the case of a single campaign and a single targeting group of Section~\ref{sec:single_target}, Proposition~\ref{prop:opt_cost} states that the expression for the lower-bound of the cost of any mixed strategy $s$ is equal to the optimal cost in Proposition~\ref{prop:opt_cont} for the case where the supply curves are given by continuous functions. In addition, Proposition~\ref{prop:opt_cost} that the allocation $(\opt)$ obtained by the algorithm of Section~\ref{sec:algorithm} can be used to compute the lower-bound on the cost of any mixed strategy. This is an important result as it provides an algorithm to compute a lower-bound of the cost of any mixed bidding strategy. We discuss in in more details in Section~\ref{sec:implementation} why this result is important in practice.

We next construct a sequence of mixed strategies that either obtains a cost that is equal to the lower-bound given in Proposition~\ref{prop:opt_cost}, or gets arbitrarily close to the lower-bound. More precisely, the next result constructs for each component $\setCp$, $p^* \in \setP$, of the solution $(\opt)$ obtained by the algorithm in Section~\ref{sec:algorithm} a sequence of mixed bidding strategies under which the cost converges to the lower-bound for this component which is given by
$$I_{p^*} p^*   - \int_0^{p^*} D_{p^*}(b)db.$$

\begin{proposition}\label{prop:opt_alloc}
  Let $(\opt)$ with components $\setCp$, $p^* \in \setP$, be the allocation obtained by the algorithm in Section~\ref{sec:algorithm}.
Then for every component $\setCp$, $p^* \in \setP$, the following is true.
  Let $j \in \setCpt$ be a targeting group that belongs to the component $\setCp$, and for all campaigns $i \in \setB_j(\opt)$ let
  $$\gamma_{i,j} = \frac{ \gamma^*_{i,j}}{\sum_{m \in \setB_j(\opt)} \gamma^*_{m,j}}.$$
  Furthermore, let $s_{p^*}(b_1)$ be a mixed bidding strategy for the component $\setCp$ such that for all campaigns $i \in \setCpc$ and $j \in \setCpt$ we have that
  $$\setp_{i,j} = \{b_1,p^*\}, \qquad j \in \setA_i(\opt),$$
as well as 
  $$\gamma_{i,j}(b_1) = \frac{ \gamma_{i,j} D_1(p^*) -\gamma^*_{i,j} D_1(p^*)}{D_1(p^*) - D_1(b_1)}, \qquad j \in \setA_i(\opt),$$
and 
$$ \gamma_{i,j}(p^*) = \gamma_{i,j} - \gamma_{1,1}(b_1), \qquad j \in \setA_i(\opt).$$
Finally, let  $C_{p^*}(s_{p^*}(b_1))$ be the cost of strategy $s_{p^*}(b_1)$ on component $\setCp$.

If $D^{(-)}_{p^*}(p^*) > 0$, 
then we have that
$$\lim_{b_1 \uparrow b^*} C_{p^*}(s_{p^*}(b_1))  =  I_{p^*} p^*   - \int_0^{p^*} D_{p^*}(b)db.$$
\end{proposition}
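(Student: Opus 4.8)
The plan is to treat this as the multi-campaign, multi-targeting-group lift of Proposition~\ref{prop:single_opt_alloc}, using the decomposition of Proposition~\ref{prop:decomposition} to isolate a single component $\setCp$ and then collapsing it onto the aggregate supply curve $D_{p^*}(b)=\sum_{j\in\setCpt}D_j(b)$ and aggregate demand $I_{p^*}=\sum_{i\in\setCpc}I_i$. First I would record that $p^*$ is exactly the threshold bid value for this aggregate pair: since the component $\setCp$ is emitted by the algorithm at the recursion step where the current sets equal $\setCpc,\setCpt$ and the returned price is $p^*$, Eq.~\eqref{eq:price1} and Eq.~\eqref{eq:price2} give $D_{p^*}(p^*)\geq I_{p^*}$ and $D_{p^*}(b)<I_{p^*}$ for $0\le b<p^*$. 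Thus $(D_{p^*},I_{p^*},p^*)$ plays the role of $(D_1,I_1,b^*)$ in the single-campaign theory, with $b^*$ replaced by $p^*$ (so the limit in the statement should read $b_1\uparrow p^*$).

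Next I would verify that the constructed strategy $s_{p^*}(b_1)$ is feasible and meets every demand exactly. The central algebraic fact is that, for each pair $i\in\setCpc$, $j\in\setA_i(\opt)$, the two-point allocation delivers
\[
\gamma_{i,j}(b_1)D_j(b_1)+\gamma_{i,j}(p^*)D_j(p^*)=\gamma^*_{i,j}D_j(p^*),
\]
independently of $b_1$: substituting $\gamma_{i,j}(p^*)=\gamma_{i,j}-\gamma_{i,j}(b_1)$ together with the stated value of $\gamma_{i,j}(b_1)$ makes the $D_j(b_1)$ terms cancel. Summing over $j$ and invoking the cost-zero property of the $ALLOC$ step (Eq.~\eqref{eq:opt_alloc}), namely $\sum_{j}\gamma^*_{i,j}D_j(p^*)=I_i$, shows campaign $i$ obtains exactly $I_i$ impressions for every $b_1$. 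Feasibility then reduces to $\gamma_{i,j}(b_1)\geq 0$ (immediate from $\gamma_{i,j}\ge\gamma^*_{i,j}$, which holds by the normalization, together with $D_j$ nondecreasing), $\gamma_{i,j}(p^*)\ge 0$, and the group-wise mass $\sum_{i\in\setB_j(\opt)}\gamma_{i,j}=1\le 1$; the nonnegativity of $\gamma_{i,j}(p^*)$ and well-posedness of the denominators hold once $b_1$ is close enough to $p^*$, which is where the hypothesis $D^{(-)}_{p^*}(p^*)>0$ (and the presence of a genuine jump at $p^*$) enters.

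The last and main step is the limit of the cost. Writing $c_j(b)=D_j(b)b-\int_0^b D_j(x)\,dx$ and regrouping the component cost by targeting group gives
\[
C_{p^*}(s_{p^*}(b_1))=\sum_{j\in\setCpt}\Big[\beta_j(b_1)c_j(b_1)+\beta_j(p^*)c_j(p^*)\Big],
\]
where $\beta_j(b)=\sum_{i\in\setB_j(\opt)}\gamma_{i,j}(b)$ and $\beta_j(b_1)+\beta_j(p^*)=1$. As $b_1\uparrow p^*$ I would use $D_j(b_1)\to D^{(-)}_j(p^*)$ and $\int_0^{b_1}D_j\to\int_0^{p^*}D_j$ (continuity of the integral of a bounded nondecreasing function), so that $c_j(b_1)\to D^{(-)}_j(p^*)p^*-\int_0^{p^*}D_j$ while $c_j(p^*)=D_j(p^*)p^*-\int_0^{p^*}D_j$. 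The weight $\beta_j(b_1)$ converges to $(1-\sum_i\gamma^*_{i,j})D_j(p^*)/(D_j(p^*)-D^{(-)}_j(p^*))$, and the resulting convex combination of $D^{(-)}_j(p^*)$ and $D_j(p^*)$ collapses exactly to $\sum_{i\in\setB_j(\opt)}\gamma^*_{i,j}D_j(p^*)$; hence each group's cost tends to $p^*\sum_i\gamma^*_{i,j}D_j(p^*)-\int_0^{p^*}D_j$. Summing over $j\in\setCpt$ and using $\sum_j\gamma^*_{i,j}D_j(p^*)=I_i$ once more yields the claimed limit $I_{p^*}p^*-\int_0^{p^*}D_{p^*}(b)\,db$.

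I expect the main obstacle to be exactly this collapse: showing that the jump of $D_{p^*}$ at $p^*$ contributes nothing to the limiting cost, i.e. that placing a vanishing mass at $p^*$ and the remaining mass just below $p^*$ recovers the ``continuous'' cost. The bookkeeping is more delicate than in the single-campaign case because the per-group weights $\beta_j(b_1)$ differ across $j$ (through the group-dependent quantities $\sum_i\gamma^*_{i,j}$ and the group-dependent jump sizes), so the argument is \emph{not} literally a single two-point strategy applied to $D_{p^*}$; the cancellation must be carried out group by group and only then summed. A secondary obstacle is confirming feasibility uniformly for $b_1$ near $p^*$ (in particular ruling out a blow-up of $\gamma_{i,j}(b_1)$ for groups with no jump at $p^*$), and reconciling the notational slips in the statement ($D_1$ in place of $D_j$, the stray $\gamma_{1,1}$, and $b^*$ in place of $p^*$).
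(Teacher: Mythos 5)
Your proposal is correct and follows essentially the same route as the paper's proof: the same two-bid-value construction, the same algebraic cancellation showing $\gamma_{i,j}(b_1)D_j(b_1)+\gamma_{i,j}(p^*)D_j(p^*)=\gamma^*_{i,j}D_j(p^*)$ for every $b_1$ (hence exact demand satisfaction), and the same limiting argument in which the residual cost terms vanish as $b_1\uparrow p^*$, yielding $I_{p^*}p^*-\int_0^{p^*}D_{p^*}(b)\,db$. Your per-group regrouping via the weights $\beta_j$ is only a bookkeeping variant of the paper's per-pair computation, and the caveats you flag (the $D_1$, $\gamma_{1,1}$, $b^*$ notational slips, and the possible blow-up of $\gamma_{i,j}(b_1)$ for groups with no jump at $p^*$ when $\sum_i\gamma^*_{i,j}<1$) are genuine issues that the paper's own proof silently ignores, so your treatment is if anything the more careful one.
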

We provide a proof for Proposition~\ref{prop:opt_alloc} in Appendix~\ref{app:opt_alloc}. In the proof we show that the the allocation $s_{p^*}(b_1)$ is a feasible solution for component $\setCp$ in the sense that for $ j \in \setCpt$ we have
$$\sum_{i \in \setB_j(\optbg)} \Bsbl \gamma_{i,j}(b_1) +  \gamma_{i,j}(p^*) \Bsbr = 1,$$
and for  $i \in \setCpc$ we have
$$\sum_{j \in \setA_i(\optbg)} \Bsbl \gamma_{i,j}(b_1) D_j(b_1) +  \gamma_{i,j}(p^*) D_j(p^*) \Bsbr = I_i.$$

\subsection{Cost Bound for Pure Strategy $(\opt)$}\label{ssec:cost_bound}
Finally we provide a bound on much we loose by using the pure strategy $(\opt)$ by the algorithm in Section~\ref{sec:algorithm} instead of a mixed strategy.

\begin{corollary}\label{prop:cost_bound}
  Let $(\opt)$ be the the allocation  obtained by the algorithm in Section~\ref{sec:algorithm}. Furthermore, let $C(\opt)$ the cost the allocation $(\opt)$. Then we have that
\begin{eqnarray*}
&&  C(\opt) -  \sum_{p \in \setP} \left [ I_{p^*} p^* - \int_0^{p^*} D_{p^*}(b) db \right ] \\
&\leq& \sum_{p^* \in \setP} \sum_{j \in \setCpt} \frac{D_j(p^*) - D^{(-)}_j(p^*)}{D_j(p^*)}  \int_0^{p^*} D_j(b) db.
\end{eqnarray*}
\end{corollary}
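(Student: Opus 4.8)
The plan is to mirror the single–group argument of Proposition~\ref{prop:single_cost_bound}, carried out component–by–component via the decomposition of Proposition~\ref{prop:decomposition}. First I would exploit the structure of the allocation $(\opt)$ returned by the algorithm: every bid value inside a component equals the common price, $b^*_{i,j}=p^*$ for $i\in\setCpc,\, j\in\setA_i(\opt)$, and the families $\{\setCpt\}_{p^*\in\setP}$ and $\{\setCpc\}_{p^*\in\setP}$ partition $\setT$ and $\setN$ respectively. This lets me rewrite $C(\opt)=\sum_{i,j}\gamma^*_{i,j}\big[D_j(b^*_{i,j})b^*_{i,j}-\int_0^{b^*_{i,j}}D_j(b)db\big]$ as a sum over components and, within each component, as a sum over targeting groups $j\in\setCpt$. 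Collecting for each such $j$ the total used fraction $\Gamma_j:=\sum_{i\in\setB_j(\opt)}\gamma^*_{i,j}\le 1$, I obtain $C(\opt)=\sum_{p^*\in\setP}\sum_{j\in\setCpt}\Gamma_j\big[D_j(p^*)p^*-\int_0^{p^*}D_j(b)db\big]$.

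Next I would evaluate the per-component lower bound. Since the algorithm marks a component \emph{done} only when $ALLOC$ attains cost zero, every campaign is met exactly: $I_i=\sum_{j\in\setA_i(\opt)}\gamma^*_{i,j}D_j(p^*)$ for $i\in\setCpc$. Summing over $i\in\setCpc$ and reorganizing by targeting group (using $\setB_j(\opt)\cap\setCpc=\setB_j(\opt)$) gives $I_{p^*}=\sum_{j\in\setCpt}\Gamma_j D_j(p^*)$, so the bound of Proposition~\ref{prop:opt_cost} for this component equals $I_{p^*}p^*-\int_0^{p^*}D_{p^*}(b)db=\sum_{j\in\setCpt}\big[\Gamma_j D_j(p^*)p^*-\int_0^{p^*}D_j(b)db\big]$. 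Subtracting from the cost expression, the $\Gamma_j D_j(p^*)p^*$ terms cancel and I get the exact per-component gap $\sum_{j\in\setCpt}(1-\Gamma_j)\int_0^{p^*}D_j(b)db$; summing over $p^*\in\setP$ yields the exact value of the left-hand side of the corollary. This reproduces, group by group, the first inequality of Proposition~\ref{prop:single_cost_bound}.

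Finally, to pass to the stated bound it suffices to prove, for each $j\in\setCpt$, the pointwise estimate $1-\Gamma_j\le\frac{D_j(p^*)-D_j^{(-)}(p^*)}{D_j(p^*)}$, equivalently $\Gamma_j D_j(p^*)\ge D_j^{(-)}(p^*)$; multiplying by the nonnegative weight $\int_0^{p^*}D_j(b)db$ and summing over $j$ and over components then gives exactly the right-hand side. This is the direct analogue of the second inequality in Proposition~\ref{prop:single_cost_bound}, where $\gamma^*_{1,1}D_1(b^*)=I_1\ge D_1^{(-)}(b^*)$ followed from $b^*$ being the minimal bid meeting the demand.

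The hard part will be this last step: showing that the algorithm fills each used targeting group $j$ of a component to at least its left-limit supply $D_j^{(-)}(p^*)$. Unlike the single-group case, the price $p^*$ is minimal only for the \emph{aggregate} demand $\sum_{j\in\setCpt}D_j(b)$ (Eq.~\eqref{eq:price1}--\eqref{eq:price2}), which after taking the left limit yields merely the aggregate inequality $\sum_{j\in\setCpt}\Gamma_j D_j(p^*)=I_{p^*}\ge\sum_{j\in\setCpt}D_j^{(-)}(p^*)$ — the per-group inequality does not follow from this alone, and because the weights $\int_0^{p^*}D_j(b)db$ differ across groups the aggregate estimate cannot simply be substituted. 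I would therefore need to use the irreducibility of a \emph{done} component — that it survives $SPLIT$ (Eq.~\eqref{eq:split1}--\eqref{eq:split3}) — to argue that no used group can sit below its left-limit level without contradicting either the minimality of $p^*$ on the component or the zero-cost feasibility of $ALLOC$; this structural reduction from the aggregate to the per-group estimate (and, if necessary, a tie-breaking specification of the degenerate $ALLOC$ solution) is the crux of the argument.
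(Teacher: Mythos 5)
Your reduction is correct as far as it goes, and it is the natural route: using the component structure of $(\opt)$ and the fact that the algorithm declares a component done only when $ALLOC$ attains cost zero, the left-hand side of the corollary equals exactly $\sum_{p^*\in\setP}\sum_{j\in\setCpt}(1-\Gamma_j)\int_0^{p^*}D_j(b)\,db$, where $\Gamma_j=\sum_{i\in\setB_j(\optbg)}\gamma^*_{i,j}$, so the corollary is equivalent to the per-group inequality $\Gamma_j D_j(p^*)\ge D_j^{(-)}(p^*)$ for every $j\in\setCpt$. (The paper itself gives no written proof of this corollary; it is presented as the component-wise analogue of Proposition~\ref{prop:single_cost_bound}, so your bookkeeping makes explicit what the paper leaves implicit.) However, your proposal stops at precisely the step that carries all the content: you flag the per-group inequality as ``the crux'' and sketch possible attacks, but do not prove it. That is a genuine gap, not a routine omission.

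Moreover, the step you defer cannot be completed from the stated properties of the algorithm, so the sketched attacks would fail. The quadratic program $ALLOC$ in Eq.~\eqref{eq:opt_alloc} typically has many zero-cost optima, and nothing in Algorithm~\ref{algo:rec} selects among them. Concretely, take one campaign with $I_1=10$ and two targeting groups with $D_1(b)\equiv 5$ (continuous) and $D_2(b)=0$ for $b<1$, $D_2(b)=10$ for $b\ge 1$. Then $p^*=1$, and $t_{1,1}=0$, $t_{1,2}=1$ is a zero-cost optimum that the algorithm may legitimately return. For this output the cost is $10$ while the lower bound is $5$, yet the right-hand side of the corollary is $0$ (group $1$ has no jump at $p^*$, and $\int_0^{p^*}D_2(b)\,db=0$); your per-group inequality fails for $j=1$ since $\Gamma_1 D_1(p^*)=0<5=D_1^{(-)}(p^*)$. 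The irreducibility argument you hope for cannot rescue this: $SPLIT$ is never invoked when the $ALLOC$ cost is zero, and the minimality of $p^*$ yields only the aggregate inequality you already noted. What is actually needed is the tie-breaking rule you mention parenthetically --- a selection of the $ALLOC$ optimum under which every group $j\in\setCpt$ is filled to at least $D_j^{(-)}(p^*)$ --- together with a proof that such an optimum always exists. This is exactly where the multi-group case differs from Proposition~\ref{prop:single_cost_bound}: there $\gamma^*_{1,1}=I_1/D_1(b^*)$ is pinned down uniquely, and $I_1\ge D_1^{(-)}(b^*)$ is immediate from the minimality of $b^*$, whereas here the allocation across groups is not unique and the needed inequality can genuinely be violated.
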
  
Similar to Section~\ref{sec:single_target}, Proposition~\ref{prop:cost_bound} states that the smaller the height $\Bbl D_j(p^*) - D^{(-)}_j(p^*)\Bbr$  of a discontinuity at $p^*$ of the supply curve of targeting groups in $\setCpt$, $p^* \in \setP$, the closer the performance of pure strategy $(\opt)$ will be to the lower-bound of  Proposition~\ref{prop:opt_cost}.

\section{Practical Considerations}\label{sec:implementation}
The focus of this paper is to analyze the optimization problem given by Eq.~\eqref{eq:opt_problem}, and derive an algorithm to compute an optimal allocation $(\opt)$. In Section~\ref{sec:online_algo} we discussed how a solution to the  optimization problem given by Eq.~\eqref{eq:opt_problem} can be used to implement an online bidding algorithm. In this section we briefly discuss some additional aspects of how the results obtained in the paper are relevant for understanding, and implementing, bidding algorithms on practical systems.

\subsection{Benchmark}
An important result of our analysis is Proposition~\ref{prop:opt_cost} which provides a tight lower-bound on the cost of any mixed strategy. We note that this lower-bound can be easily computed using the algorithm given in Section~\ref{sec:algorithm}. This lower-bound can be used to benchmark the performance of other online bidding algorithms. In addition, Proposition~\ref{prop:cost_bound} can be used to determine the cost difference of the allocation $(\opt)$ obtained by the algorithm in Section~\ref{sec:algorithm} and any  mixed  bidding strategies.
Note that the allocation $(\opt)$ is a pure bidding strategy that is (slightly) easier to compute and implement in practice. If the cost difference is not too large, then using  allocation $(\opt)$ might be preferred over using a mixed strategy.

\subsection{Structural Properties}\label{ssec:imp_decomposition}
Proposition~\ref{prop:decomposition}, and Proposition~\ref{prop:opt_cont}~and~\ref{prop:opt_alloc}, state that an optimal bidding strategy ``decomposes'' the problem given by Eq.~\eqref{eq:opt_problem} into disjoint components consisting of a set of campaigns and targeting group; an an optimal allocation can be obtained by computing an optimal solution for each component individually. In addition, for each component there exists a single bid value $p^*$ that is used by all campaigns in the component to bid for impressions. This means than rather having to decide on a bid value for each admissible campaign and targeting group pair in a given component, it suffices to compute a single bid value $[^*$ for each component. This result is also important for designing  online algorithms that dynamically update bid values and allocations as bid requests arrive as this result allows to significantly reduce the search space for optimal bid values, i.e. the online algorithm has to tune/learn only a single bid value for each component.  

\subsection{Mixed Bidding Strategies}
An important result of our analysis is to show that in the case (which is the norm in practice) where supply curves are not necessarily given by continuous functions, then pure bidding strategies might not be optimal. Moreover, the result in Proposition~\ref{prop:opt_alloc} states it is sufficient to consider mixed strategies that use with at most 2 bid values for each targeting group, where one if the bid values is equal to the bid valued obtained $p^*$ by the algorithm of Section~\ref{sec:algorithm}, and then second bid value is chosen so that it is close to $p^*$. This implies that (near) optimal mixed bidding strategies are relatively simple to compute and implement.

\subsection{Dynamic Bidding Strategies}
Rather than pre-computing the bid values and allocations at the beginning of a decision period, one can also consider the approach where bid values and allocations are dynamically updated during a decision period. As noted in the previous subsection, the results on the structural property of an optimal allocation can be used in this case to significantly reduce the search space. Analyzing this approach is future research.

\subsection{Estimating Supply Curves}
The online algorithm of Section~\ref{sec:online_algo} assumes that (a good estimate of) the supply curves are available at the beginning of a decision period. A discussion of how to estimate the supply curves is outside the scope of this paper, and this question has been addressed in related work such as~\cite{cui}. The result of the experimental case study in~\cite{cui}, as well as experiments that we carried out, suggest that the supply curves can be estimate quite well using historical data. In particular, using historical data over the past few days, the supply curve for decision periods of a few minutes, or even an hour, can be well estimated (predicted). One reason for this is that each targeting group consists of bid requests that come from several applications.   While the supply curve of each application may have random fluctuations (noise) over a decision period of a few minutes or an hour, the aggregated supply curves tend to ``smooth out'' the noise of individual targeting groups and are easier to estimate/predict.
We also note that small estimation errors can be ``corrected'' over a decision period by using the allocation obtained by the algorithm as a starting point, and dynamically make small adjustments to the bid value and allocations. A detailed analysis and discussion of this approach is outside the scope of this paper, and future research. 

We note that the supply curves used in the optimization problem should evolve from one decision period to another. That is, for each decision period one would use the newest estimates/predictions of the supply curves that is available based on historical data.


\subsection{Curse of Dimensionality}
In the case where a bid request type consists of a small number of  variables/properties such as for example the location, gender, and age of the end user, then targeting groups as given by Definition~\ref{def:targeting_groups} can be easily computed/determined. We note that in current practical applications, where the number of properties the bid request type is typically small (around 3-5 properties). However if the bid request types consists of a large number of properties (more than 20 properties) then the number of targeting groups can become too large to be computed and/or stored. This problem corresponds to the well-known ``curse-of-dimensionality''. Fortunately, in this case it is possible to compute in this case ``approximate'' optimal solution by using ``approximate targeting groups'' that can be efficiently computed. This ``approximate'' optimal solution maintains the structural properties on an optimal solution for the optimization problem given by Eq.~\eqref{eq:opt_problem}. The discussion/derivation of such approximate solutions is outside the scope of this paper. We note that in order to be able to derive such approximate optimal solution, it is crucial to first characterize the exact optimal solution to the  the optimization problem given by Eq.~\eqref{eq:opt_problem}.

\section{Conclusions}\label{sec:conclusions}
We studied the optimal bidding strategies for a DSP that represents several ad campaigns with overlapping targeting criteria. Using the concept of targeting groups and supply curves, we formally modelled the problem as an optimization problem, and characterize optimal bidding strategies.

While the focus of the paper is the  formal analysis of optimal bidding strategies, we discussed in Section~\ref{sec:online_algo} how the solution obtained by the algorithm of Section~\ref{sec:algorithm}  can be used to implement an online bidding algorithm. In addition, we discussed in Section~\ref{sec:implementation}  several aspects of the formal analysis, and the online algorithm of Section~\ref{sec:online_algo}, that are relevant for the implementation of bidding algorithms in practice. In collaboration with a DSP, we have implemented and evaluated the online  algorithm  of Section~\ref{sec:online_algo} using historical data obtain from actual ad exchanges, and actual campaigns that the DSP was representing.  Our experiments showed that the online algorithm (using the allocation $(\opt)$ obtained by the algorithm of Section~\ref{sec:algorithm}) allows the DSP to significantly reduce the cost of getting the required impressions for the ad campaigns it represents.

There are several interesting direction for future research based on the results in this paper. In particular, the question of how to approximate an optimal bidding strategy in the case where the number of targeting groups becomes too large in order to be used on a practical system (curse of dimensionality). In addition, an interesting question is how to tune/dynamically change the bid values and allocations during a decision period in order to account for small changes, or estimation errors, in the supply curves. Preliminary research  suggests that the results obtained in this paper can be used to address these two problems.

\newpage
\bibliographystyle{plain}
\bibliography{sample-bibliography}{}

\appendix
\newpage
\section{Properties of Mixed Strategies}\label{app:mixed_strategies}
In this appendix we derive  properties of mixed strategies that we use in our analysis.

\subsection{Mixed Strategies for A Single Targeting Group}
We first consider the case of Section~\ref{sec:single_target} where a single campaign $i=1$ gets impressions from a single targeting group $j=1$, and $I_1$ is the total number of impressions that the campaign  is required to obtain.
Furthermore, let $s \in \setS$ be a mixed strategy as defined in Section~\ref{ssec:single_mixed}. 

Our first result states that for this case it suffices to consider mixed bidding strategies that obtain exactly the required number of impressions from the targeting group. 

\begin{lemma}\label{lemma:mixed_I_1}
Consider the special case where a single campaign $i=1$ gets impressions from a single targeting group $j=1$, and let $I_1$ be the total number of impressions that the campaign  is required to obtained.  
If $s \in \setS$ is a mixed bidding strategy such that   
$$\sum_{b \in \setp(s)}  \gamma_{1,1}(b) D_1(b) > I_1,$$
then there exists a mixed bidding strategy $s' \in \setS$
with
$$\sum_{b \in \setp'_{1,1}}  \gamma'_{1,1}(b) D_1(b) = I_1,$$
that leads to a strictly lower cost, and  we have that
$$C(s') < C(s).$$
\end{lemma}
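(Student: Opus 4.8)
The plan is to show that whenever a mixed strategy over-delivers impressions, we can strictly reduce the cost by "shaving off" the excess supply from the highest bid value used. The intuition is that the highest bid value contributes impressions at the highest marginal price, so reducing the weight on that bid is where we save the most.

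First I would let $s \in \setS$ be a mixed strategy with bid values $\setp_{1,1} = \{b_1, b_2, \ldots, b_K\}$ (ordered so that $b_1 < b_2 < \cdots < b_K$) and weights $\gamma_{1,1}(b_k)$, satisfying $\sum_k \gamma_{1,1}(b_k) D_1(b_k) > I_1$. Let $\Delta = \sum_k \gamma_{1,1}(b_k) D_1(b_k) - I_1 > 0$ be the excess. I would construct $s'$ by keeping all bid values and weights identical except at the largest bid $b_K$: I reduce its weight from $\gamma_{1,1}(b_K)$ to $\gamma'_{1,1}(b_K) = \gamma_{1,1}(b_K) - \delta$, choosing $\delta > 0$ so that the impressions drop by exactly $\Delta$. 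Since the impression contribution of $b_K$ is $\gamma_{1,1}(b_K) D_1(b_K)$, I set $\delta = \Delta / D_1(b_K)$, which is well-defined provided $D_1(b_K) > 0$. I should check that $\delta \le \gamma_{1,1}(b_K)$: if the required reduction exceeds the weight on $b_K$, I would instead zero out $b_K$ entirely and continue shaving the next-highest bid, iterating downward until the excess is exhausted. The feasibility constraint $\sum_b \gamma'_{1,1}(b) \le 1$ continues to hold because I am only decreasing weights.

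The key computation is the cost difference. Writing the per-impression cost structure, the cost of $s'$ differs from that of $s$ only through the reduced weight on $b_K$, so
\begin{equation*}
C(s) - C(s') = \delta \left[ D_1(b_K) b_K - \int_0^{b_K} D_1(x)\,dx \right].
\end{equation*}
I would then argue this bracket is strictly positive: it equals $\int_0^{b_K} \big(D_1(b_K) - D_1(x)\big)\,dx + \text{(boundary terms)}$, and more directly, since $D_1$ is non-decreasing, $D_1(b_K) b_K - \int_0^{b_K} D_1(x)\,dx = \int_0^{b_K}\big(D_1(b_K) - D_1(x)\big)dx \ge 0$, with strict positivity whenever $b_K > 0$ and $D_1$ is not identically equal to $D_1(b_K)$ on $(0,b_K)$. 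The cleaner observation is that this bracket is exactly the cost of acquiring $D_1(b_K)$ impressions at bid $b_K$, which is nonnegative, and strictly positive as long as the bid $b_K > 0$ delivers impressions at positive marginal price. Hence $C(s') < C(s)$.

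The main obstacle I anticipate is the edge case where the highest-bid shaving would require $\delta > \gamma_{1,1}(b_K)$, or where $b_K = 0$ (so its cost contribution is zero and removing it saves nothing). In the latter degenerate case, bidding $0$ should yield $D_1(0)$ impressions at zero cost, so the over-delivery from $b_K = 0$ could be removed for free rather than at a strict saving — I would need to handle this by requiring that the excess be shaved from the lowest bid with strictly positive cost, and argue that at least one such bid exists whenever the total cost is positive and $\Delta > 0$. A careful statement would isolate this by noting that if all the "shavable" excess comes from a zero-cost bid, we can remove it at no cost increase and then the genuine excess (if any remains) sits on positive-cost bids, where the strict inequality applies. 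Making the iterative downward-shaving argument airtight in these boundary situations is where the bookkeeping will demand the most care.
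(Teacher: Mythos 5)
Your proposal takes essentially the same approach as the paper: the paper's own proof is only a one-sentence sketch stating that reducing the allocations $\gamma_{1,1}(b)$ until exactly $I_1$ impressions are obtained leads to a strictly lower cost (the detailed derivation is explicitly omitted), and your highest-bid shaving argument, with the explicit cost difference $C(s)-C(s') = \delta\left[D_1(b_K)b_K - \int_0^{b_K}D_1(x)\,dx\right]$, is a concrete instantiation of exactly that idea. The degenerate cases you flag (e.g., $b_K=0$, or $D_1$ constant on $[0,b_K]$, where the bracket vanishes and only $C(s')\le C(s)$ can be guaranteed) are a genuine boundary issue with the lemma's strict inequality that the paper's sketch silently ignores, so your treatment is, if anything, more careful than the paper's.
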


This result can been proved by showing that reducing the allocations $\gamma_{1,1}(b)$, $b\in \setp_{1,1}$, of strategy $s$ until the exact number of impressions is obtained will lead to a strictly  lower cost compared with $C(s)$. We omit here a detailed derivation.

The next result provides a condition under which it suffices to consider mixed strategies such that
$$\sum_{b \in \setp}  \gamma_{1,1}(b)  = 1.$$

\begin{lemma}\label{lemma:mixed_1}
Consider the special case where a single campaign $i=1$ gets impressions from a single targeting group $j=1$, and let $I_1$ be the total number of impressions that the campaign is required to obtain.  
Suppose that  for a mixed bidding strategy $s \in \setS$ we have that
$$\sum_{b \in \setp}  \gamma_{1,1}(b)  < 1,$$
and  there exists bid values $b_1, b_2 \in \setp_{1,1}$ such that
$$0 < D_1(b_1) < D_1(b_2).$$
Then there exists a mixed bidding strategy $s' \in \setS$ with
$$\sum_{b \in \setp'_{1,1}}  \gamma'_{1,1}(b) D_1(b) = I_1$$
and
$$\sum_{b \in \setp'_{1,1}}  \gamma'_{1,1}(b)  = 1,$$
that leads to a strictly lower cost, and we have that
$$C(s') < C(s).$$
\end{lemma}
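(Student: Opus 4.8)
The plan is to exploit the budget slack to transfer impression-mass from the higher-supply bid $b_2$ to the lower-supply bid $b_1$, using the fact that a lower bid is strictly more cost-efficient per impression. Write $c(b) = D_1(b)b - \int_0^b D_1(x)\,dx$ for the cost contribution of bid $b$, so that $C(s) = \sum_{b\in\setp_{1,1}} \gamma_{1,1}(b)\,c(b)$ and the impressions collected at bid $b$ are $\gamma_{1,1}(b) D_1(b)$. Note $b_1 < b_2$, since $D_1$ is non-decreasing and $D_1(b_1) < D_1(b_2)$.

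The key structural fact I would prove first is a monotonicity of the average price: if $0 < D_1(b_1) < D_1(b_2)$, then $\frac{c(b_1)}{D_1(b_1)} < \frac{c(b_2)}{D_1(b_2)}$. Integration by parts gives $c(b) = \int_0^b x\,dD_1(x)$, the total payment for the $D_1(b)$ impressions won at bid $b$, so $c(b)/D_1(b)$ is their average price. The additional impressions won when the bid is raised from $b_1$ to $b_2$ are all obtained at (second-)prices strictly above $b_1$, while those won at $b_1$ are obtained at prices at most $b_1$; hence the average strictly increases. Quantitatively, from $c(b_2)-c(b_1) \geq b_1\big(D_1(b_2)-D_1(b_1)\big)$ and $c(b_1)\leq b_1 D_1(b_1)$ one gets $c(b_2)D_1(b_1)-c(b_1)D_1(b_2) \geq \big(D_1(b_2)-D_1(b_1)\big)\big(b_1 D_1(b_1)-c(b_1)\big) \geq 0$, with strictness because the new mass sits strictly above $b_1$.

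With this in hand, the construction is a mass-shift. I would decrease $\gamma_{1,1}(b_2)$ by $\delta > 0$ and increase $\gamma_{1,1}(b_1)$ by $\delta\,D_1(b_2)/D_1(b_1)$. This keeps the impression count unchanged, increases the total allocation by $\delta\big(D_1(b_2)/D_1(b_1)-1\big) > 0$, and changes the cost by $\delta\,D_1(b_2)\big(\frac{c(b_1)}{D_1(b_1)} - \frac{c(b_2)}{D_1(b_2)}\big) < 0$. I would raise $\delta$ until either the slack $1 - \sum_b \gamma_{1,1}(b)$ is consumed, giving $\sum_b \gamma'_{1,1}(b) = 1$, or $\gamma_{1,1}(b_2)$ reaches $0$; in either case the cost has strictly dropped. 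Finally, applying Lemma~\ref{lemma:mixed_I_1} trims any impression surplus down to exactly $I_1$ without increasing the cost.

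The hard part will be driving the total allocation to exactly $1$ while preserving exactly $I_1$ impressions. If one shift exhausts $\gamma_{1,1}(b_2)$ before removing all slack, I would repeat the argument with any remaining pair of bids of distinct positive supply. The only residual case is when all surviving mass sits at a single supply level $D_1(b_1)$ with $I_1 < D_1(b_1)$ and slack still present; this is resolved either by shifting onto a still lower positive-supply bid, or by padding the allocation with the bid value $0$, which (under the natural convention $D_1(0)=0$ that a zero bid wins nothing) fills the budget at no cost and no impressions. Since every such step only lowers the cost, the resulting $s'$ satisfies $\sum_b \gamma'_{1,1}(b) = 1$, collects exactly $I_1$ impressions, and obeys $C(s') < C(s)$, the strict inequality coming from the first beneficial shift.
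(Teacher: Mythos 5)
Your core mechanism is sound and is, at bottom, the same one the paper uses: you shift mass from the high-supply bid $b_2$ to the low-supply bid $b_1$ at the impression-preserving ratio $D_1(b_2)/D_1(b_1)$, financed by the slack $1-\sum_b \gamma_{1,1}(b)$; the paper performs the same rebalancing in a single step, enlarging the pair's total mass to $\gamma$ (so that the overall allocation sums to one) and choosing the split $\Delta_\gamma$ so that the pair's impression count is unchanged. Your average-price inequality $c(b_1)/D_1(b_1)<c(b_2)/D_1(b_2)$, with $c(b)=D_1(b)b-\int_0^b D_1(x)\,dx$, is correct (including the strictness argument via the mass on $(b_1,b_2]$ sitting strictly above $b_1$), and it isolates cleanly \emph{why} the shift lowers cost, where the paper instead verifies the cost drop by a direct chain of inequalities. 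Your incremental version even sidesteps a feasibility point the paper glosses over: when $\gamma D_1(b_1)$ already exceeds the pair's impressions, the paper's $\Delta_\gamma$ would force $\gamma-\Delta_\gamma<0$, whereas your shift simply stops when $\gamma_{1,1}(b_2)$ is exhausted.

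The genuine gap is the endgame ordering. You first drive the total allocation to $\sum_b\gamma_{1,1}(b)=1$ by shifting and padding, and only \emph{then} invoke Lemma~\ref{lemma:mixed_I_1} to trim the impression surplus down to $I_1$. But Lemma~\ref{lemma:mixed_I_1} trims precisely by reducing allocations, so the trimming step destroys the equality $\sum_b\gamma'_{1,1}(b)=1$ you just established; as written, your procedure cannot deliver both conclusions of the lemma simultaneously whenever $s$ over-delivers impressions. The repair is to reverse the order: trim to exactly $I_1$ impressions first, then run your shifts, which preserve the impression count exactly while raising the total mass to one (this is also how the lemma is meant to interact with Lemma~\ref{lemma:mixed_I_1} in the paper, whose own construction keeps impressions equal to those of $s$, so exactness must be arranged beforehand). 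Separately, your padding step at bid $0$ quietly adds the hypothesis $D_1(0)=0$, which Assumption~\ref{ass:supply_curves} does not grant. To be fair, some such condition is genuinely needed — if $D_1(0)>I_1$, then any allocation with total mass one wins at least $D_1(0)>I_1$ impressions, so the two displayed equalities in the lemma are jointly unsatisfiable — but you should surface this as an explicit assumption rather than a convention, since it is exactly the point where the statement, taken literally, can fail.
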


\begin{proof}

  We prove the result by constructing a mixed bidding strategy $s' \in \setS$ that leads to a strictly lower cost than the given strategy $s$

  Let
  $$ \gamma_{1,1}(b_1) D_1(b_1) + \gamma_{1,1}(b_2) D_1(b_2) = I,$$
  i.e. $I$ is the amount of impressions that are obtained by bid values $b_1$ and $b_2$ combined. 

  Let $\gamma > 0$ be such that 
  $$\gamma + \sum_{\in \setp_{1,1} \backslash\{b_1,b_2\}} \gamma_{1,1}(b) = 1.$$
  As by assumption we have that
  $$\sum_{b \in \setp}  \gamma_{1,1}(b)  < 1,$$
  it follows that
  $$\gamma > \gamma_{1,1}(b_1) + \gamma_{1,1}(b_2).$$
  
  Let $\Delta_\gamma$ be such that
  $$ \Delta_\gamma D_1(b_1) + (\gamma -\Delta_\gamma) D_1(b_2) = I.$$
  Note that such a $\Delta_\gamma$ exists as we have that
  $$\gamma_{1,1}(b_1) D_1(b_1) + \gamma_{1,1}(b_2) D_1(b_2) = I,$$
  as well as
  $$0 < D_1(b_1) < D_1(b_2)$$
  and
  $$\gamma_{1,1}(b_1) +  \gamma_{1,1}(b_2) < \gamma.$$
  Furthermore, these properties imply that
    $$ \Delta_\gamma > \gamma_{1,1}(b_1)$$
  and
  $$  (\gamma -\Delta_\gamma) <  \gamma_{1,1}(b_2).$$
 
   Using these definitions, we construct the mixed bidding  strategy $s' \in \setS$ as follows. We set 
  $$\setp'_{1,1} = \setp_{1,1},$$
  and let
   $$\gamma'_{1,1}(b_1)  = \Delta_\gamma,$$
  and
  $$\gamma'_{1,1}(b^*)  = \gamma - \Delta_\gamma.$$ 
  For all $b \in \setp'_{1,1}$, $b \neq b_1,b_2$, we set
  $$ \gamma'_{1,1}(b)  = \gamma_{1,1}(b),$$
  and the new strategy $s'$ differs from strategy $s$ only the the allocations for the bid values $b_1$ and $b_2$.

  The difference in the cost for the two strategies in then given by
  \begin{eqnarray*}
    && C(s) - C(s') \\
    &=& \Bbl \gamma_{1,1}(b_1) - \Delta_\gamma \Bbr
    \left [ D_1(b_1)b_1 - \int_{0}^{b_1} D_1(b)db \right ] \\
    && + \Bbl \gamma_{1,1}(b_1) - \gamma + \Delta_\gamma \Bbr
    \left [ D_1(b_2)b_2 - \int_{0}^{b-2} D_1(b)db \right ] \\
    &=& \gamma_{1,1}(b_1) \left [ D_1(b_1)b_1 - \int_{0}^{b_1} D_1(b)db \right ] \\
    && + \Bbl \gamma_{1,1}(b_2) - \gamma \Bbr
    \left [ D_1(b_2)b_2 - \int_{0}^{b_2} D_1(b)db \right ] \\
    &&  - \Delta_\gamma  \left [ D_1(b_1)b_1 -  D_1(b_2)b_2 - \int_{b_1}^{b_2} D_1(b)db \right ] \\
    &>&  \gamma_{1,1}(b_1) \left [ D_1(b_1)b_1 - \int_{0}^{b_1} D_1(b)db \right ] \\
    && + \Bbl \gamma_{1,1}(b_2) - \gamma \Bbr
    \left [ D_1(b_2)b_2 - \int_{0}^{b_2} D_1(b)db \right ] \\
    && - \Delta_\gamma  \left [ D_1(b_1)b_1 - \int_{0}^{b_1} D_1(b)db \right ].
  \end{eqnarray*}

  Using the fact that
  $$ \Delta_\gamma D_1(b_1) + (\gamma -\Delta_\gamma) D_1(b_2) = I,$$
  we obtain that
  \begin{eqnarray*}
    && C(s) - C(s') \\
    &>& \gamma_{1,1}(b_1) \left [ D_1(b_1)b_1 - \int_{0}^{b_1} D_1(b)db \right ] \\
    && + \gamma_{1,1}(b_2) 
    \left [ D_1(b_2)b_2 - \int_{0}^{b_2} D_1(b)db \right ] \\
    && + \gamma \int_{0}^{b_2} D_1(b)db \\
    && - I + \Delta_\gamma  \int_{0}^{b_1} D_1(b)db.
  \end{eqnarray*}   

  Using the fact that
  $$   \gamma_{1,1}(b_1) D_1(b_1) +   \gamma_{1,1}(b_2)  D_1(b_2) = I_1,$$
  we obtain that
  \begin{eqnarray*}
    && C(s) - C(s') \\
    &>& - \gamma_{1,1}(b_1) \int_{0}^{b_1} D_1(b)db \\
    && - \gamma_{1,1}(b_2)  \int_{0}^{b_2} D_1(b)db  \\
    && + \gamma \int_{0}^{b_2} D_1(b)db \\
    && + \Delta_\gamma  \int_{0}^{b_1} D_1(b)db.
  \end{eqnarray*}   

  Finally, using the fact that
  $$\gamma_{1,1}(b_1) +  \gamma_{1,1}(b_2) < \gamma,$$
  we obtain that
    \begin{eqnarray*}
    && C(s) - C(s') \\
      &>& \Delta_\gamma  \int_{0}^{b_1} D_1(b)db \\
      &>& 0.
  \end{eqnarray*}   
This establishes the result of the proposition.
\end{proof}

The next results is the one of Proposition~\ref{prop:single_opt_cost}.

\begin{proposition}\label{prop:single_mixed_strategies}
Consider the special case where a  single campaign $i=1$ gets impressions from a single targeting group $j=1$, and let $I_1$ be the total number of impressions that the campaign is required to obtain.
Let $C(s)$ be the cost of a given  mixed bidding strategy $s \in \setS$ such that
$$\sum_{b \in \setp} \gamma_{1,1}(b) D_1(b) = I_1.$$
Then we have that
$$ C(s) \geq I_1 b^* - \int_0^{b^*} D_1(b) db,$$
where the bid value $b^*$ is as given by Eq.~\eqref{eq:opt_b_1}~and~\eqref{eq:opt_b_2}.
\end{proposition}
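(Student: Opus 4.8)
The plan is to reduce the statement to a single pointwise inequality satisfied by the per-bid cost and then aggregate over the bid values used by $s$. Writing the cost contributed by a single bid value $b$ as $g(b) = D_1(b)b - \int_0^b D_1(x)\,dx$, the mixed-strategy cost is $C(s) = \sum_{b \in \setp_{1,1}} \gamma_{1,1}(b)\, g(b)$, so it suffices to bound each $g(b)$ from below by a linear function of $D_1(b)$ whose slope is $b^*$.

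The key step is to establish the pointwise ``supporting line'' bound
\[
g(b) \;\geq\; b^* D_1(b) - \int_0^{b^*} D_1(x)\,dx, \qquad b \geq 0,
\]
which I will call $(\star)$. Rearranging, $(\star)$ is equivalent to $D_1(b)\,(b - b^*) \geq \int_{b^*}^{b} D_1(x)\,dx$, and I would verify it using only the monotonicity of $D_1$, split into two regimes. When $b \geq b^*$ we have $D_1(x) \leq D_1(b)$ for $x \in [b^*,b]$, so $\int_{b^*}^{b} D_1 \leq D_1(b)(b-b^*)$; when $b < b^*$ we have $D_1(x) \geq D_1(b)$ for $x \in [b,b^*]$, so $\int_{b}^{b^*} D_1 \geq D_1(b)(b^*-b)$, which is the same inequality after flipping the orientation of the integral. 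Note that $(\star)$ holds for \emph{every} $b$ with no reference to the generalized inverse of $D_1$, so the jump of $D_1$ at $b^*$ causes no difficulty; this is essentially a tangent-from-below to the convex cost-versus-impressions relation, whose slope at the target level $I_1$ is exactly $b^*$, which is what makes $b^*$ the correct multiplier.

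With $(\star)$ in hand, I multiply by $\gamma_{1,1}(b)\geq 0$ and sum over $b\in\setp_{1,1}$ to get
\[
C(s) \;\geq\; b^* \sum_{b \in \setp_{1,1}} \gamma_{1,1}(b)\, D_1(b) \;-\; \Big(\sum_{b \in \setp_{1,1}} \gamma_{1,1}(b)\Big)\int_0^{b^*} D_1(x)\,dx.
\]
The hypothesis $\sum_{b} \gamma_{1,1}(b) D_1(b) = I_1$ turns the first term into $I_1 b^*$. For the second term I use the feasibility constraint $\sum_{b} \gamma_{1,1}(b) \leq 1$ together with $\int_0^{b^*} D_1(x)\,dx \geq 0$ (since $D_1 \geq 0$), giving $\big(\sum_{b} \gamma_{1,1}(b)\big)\int_0^{b^*} D_1 \leq \int_0^{b^*} D_1$. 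Substituting yields $C(s) \geq I_1 b^* - \int_0^{b^*} D_1(x)\,dx$, as claimed.

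I expect the only genuinely delicate point to be the pointwise bound $(\star)$ and recognizing $b^*$ as the right slope; the aggregation is bookkeeping. The one subtlety in that bookkeeping is that feasibility only gives $\sum_b \gamma_{1,1}(b) \leq 1$ rather than $=1$, which is precisely why the non-negativity of $\int_0^{b^*} D_1$ is needed to discard the slack without weakening the bound.
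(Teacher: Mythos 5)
Your proof is correct, and it takes a genuinely different --- and leaner --- route than the paper's. The paper proves this proposition by case analysis: it first invokes Lemma~\ref{lemma:mixed_I_1} and Lemma~\ref{lemma:mixed_1} to argue that one may assume without loss of generality that $\sum_{b} \gamma_{1,1}(b) = 1$ in addition to $\sum_{b} \gamma_{1,1}(b) D_1(b) = I_1$ (when some bid below $b^*$ has positive supply), uses that normalization to distribute the constant $-b^* I_1 + \int_0^{b^*} D_1(b)\,db$ across the bid values, and then applies regime-dependent monotonicity bounds, $\int_b^{b^*} D_1(x)\,dx \geq D_1(b)(b^*-b)$ on $B^{(-)}$ and $\int_{b^*}^{b} D_1(x)\,dx \leq D_1(b)(b-b^*)$ on $B^{(+)}$; a second, separate case ($D_1 \equiv 0$ below $b^*$) is then handled with its own chain of inequalities involving the slack $\Delta_\gamma = 1 - \sum_b \gamma_{1,1}(b)$. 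Your supporting-line inequality $(\star)$ is exactly the same monotonicity estimate, but stated uniformly for all $b \geq 0$, and your aggregation never requires the normalization: the hypothesis $\sum_b \gamma_{1,1}(b) D_1(b) = I_1$ produces the $I_1 b^*$ term, and non-negativity of $\int_0^{b^*} D_1(x)\,dx$ absorbs the slack in $\sum_b \gamma_{1,1}(b) \leq 1$. This collapses the paper's two cases and two auxiliary lemmas into one displayed inequality plus bookkeeping. A further payoff of your argument is worth noting: $(\star)$ and the aggregation never use the defining property of $b^*$ from Eq.~\eqref{eq:opt_b_1}~and~\eqref{eq:opt_b_2}, so your proof actually shows $C(s) \geq I_1 c - \int_0^{c} D_1(x)\,dx$ for \emph{every} $c \geq 0$; the specific $b^*$ is simply the value that makes this family of bounds tight (as Proposition~\ref{prop:single_opt_alloc} confirms). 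What the paper's longer route buys is the two reduction lemmas themselves, which isolate standalone structural facts about mixed strategies (over-delivery is never optimal; unused allocation mass can be exploited), but as a proof of this particular proposition your argument is the cleaner one and equally rigorous.
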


\begin{proof}
  Let $s \in \setS$ be the mixed bidding strategy as given in the statement of the proposition, and let the set $B^*(I_1)$ be the set of all bid values $b \in \setp_{1,1}$ such that
  $$D_1(b) = D_1(b^*) = I_1.$$
  Note that if we have that
  $$B^*(I_1) = \setp_{1,1},$$
  then we have by the definition of the cost function $C(s)$ that
  $$ C(s) = I_1 b^* - \int_0^{b^*} D_1(b) db;$$
  and the result of the proposition follows.  
  Therefore we assume in the following that
  $$B^*(I_1) \neq \setp_{1,1}.$$
  For this case, let  the sets $B^{(-)}$ and $B^{(+)}$ be given by
  $$B^{(-)} = \{ b \in \setp_{1,1} | D_1(b) < I_1\}$$
  and
  $$B^{(+)} = \{ b \in \setp_{1,1} | D_1(b) > I_1\}.$$

  We first consider the case where there exists a bid value $b_1 \in B^{(-)}$ such that
  $$D_1(b_1) > 0.$$
  Using Lemma~\ref{lemma:mixed_I_1}~and~\ref{lemma:mixed_1}, without loss of generality we can assume in this case that
  $$\sum_{b \in \setp} \gamma_{1,1}(b) D_1(b) = I_1$$
  and
  $$\sum_{b \in \setp} \gamma_{1,1})(b)= 1,$$
  as otherwise the bidding strategy $s$ is not an optimal bidding strategy.
  It then follows that
  \begin{eqnarray*}
    && \sum_{b \in B^{(-)}} \gamma(b) D_1(b) + \sum_{b \in B^{(+)}} \gamma(b) D_1(b) \\
    &=& I_1 \left [ \sum_{b \in B^{(-)}} \gamma(b)  + \sum_{b \in B^{(+)}} \gamma(b)  \right ].
  \end{eqnarray*}
  Using these results, we obtain that
  \begin{eqnarray*}
    && C(s) -  b^* I_1 + \int_0^{b^*} D_1(b) db  \\
    &=& \sum_{b \in B^{(-)}} \gamma(b) \Bsbl \Bbl b D_1(b) - b^* I_1 \Bbr 
    + \int_b^{b^*} D_1(b)  db  \Bsbr \\
    &+&  \sum_{b \in B^{(+)}} \gamma(b) \Bsbl \Bbl b D_1(b) - b^* I_1 \Bbr
    - \int_{b^*}^b  D_1(b) db  \Bsbr \\
    &\geq& \sum_{b \in B^{(-)}} \gamma(b) \Bsbl  \Bbl b D_1(b) - b^* I_1 \Bbr
    +  D_1(b)(b^* - b)  \Bsbr \\
    &+&  \sum_{b \in B^{(+)}} \gamma(b) \Bsbl \Bbl b D_1(b) - b^* I_1 \Bbr
    -  D_1(b) (b - b^*)   \Bsbr \\
    &=& \sum_{b \in B^{(-)}} \gamma(b) \Bsbl b^* \Bbl D_1(b) - I_1 \Bbr \Bsbr \\
    &+&  \sum_{b \in B^{(+)}} \gamma(b) \Bsbl b^* \Bbl D_1(b) - I_1 \Bbr  \Bsbr \\
    &=& 0.
  \end{eqnarray*}
  This implies that
  $$ C(s) -  b^* I_1 + \int_0^{b^*} D_1(b) db \geq 0,$$
  and the result of the proposition follows for the case where there exists a bid value $b_1 \in B^{(-)}$ such that
  $$D_1(b_1) > 0.$$

  It remains to consider the case where we have that
  $$D_1(b) = 0, \qquad b \in \setB^{(-)}.$$
  Without loss of generality we can assume in this case that
  $$\setB^{(-)} = \emptyset,$$
  and we have that
  $$D_1(b) \geq I_1, \qquad b \in \setp.$$
  Using Lemma~\ref{lemma:mixed_I_1}, without loss of generality we can furthermore assume in this case that 
  $$\sum_{b \in \setp} \gamma_{1,1}(b) D_1(b) = I_1.$$
  Let
  $$ 0 \leq \Delta_\gamma = 1 - \sum_{b \in  \setp} \gamma_{1,1}(b).$$
  It then follows that
  \begin{eqnarray*}
    && C(s) -  b^* I_1 + \int_0^{b^*} D_1(b) db  \\
    &=& \sum_{b \in  B^{(+)}} \gamma(b) \Bsbl \Bbl b D_1(b) - b^* I_1 \Bbr 
    - \int_{b^*}^{b} D_1(b)  db  \Bsbr \\
    && - \Delta_\gamma  \Bsbl b^* I_1 - \int_0^{b^*}  D_1(b) db  \Bsbr \\
    &\geq&  \sum_{b \in B^{(+)}} \gamma(b) \Bsbl \Bbl b D_1(b) - b^* I_1 \Bbr
    - D_1(b) (b - b^*)  \Bsbr \\
    && - \Delta_\gamma  \Bsbl b^* I_1 - \int_0^{b^*}  D_1(b) db \Bsbr   \\
    &=&  \sum_{b \in B^{(+)}} \gamma(b) (D_1(b) - I_1) b^*   \\
    && - \Delta_\gamma  \Bsbl b^* I_1 - \int_0^{b^*}  D_1(b) db  \Bsbr \\
     &\geq&  \sum_{b \in B^{(+)}} \gamma(b) (D_1(b) - I_1) b^*   \\
    && - \Delta_\gamma  \Bsbl b^* I_1 - \int_0^{b^*}  D_1(b) db  \Bsbr \\
    &\geq&  \sum_{b \in B^{(+)}} \gamma(b) (D_1(b) - I_1) b^*   \\
    && - \Delta_\gamma  b^* I_1  \\
    &=& b^* \sum_{b \in B^{(-)}} \gamma(b) D_1(b)\\
    &&  - I_1 b^*. 
  \end{eqnarray*}
  Using the fact that
  $$\sum_{b \in \setp} \gamma_{1,1}(b) D_1(b) = I_1,$$
  we obtain that
  $$C(s) -  b^* I_1 + \int_0^{b^*} D_1(b) db \geq 0.$$
  The result of the proposition then follows.
  
\end{proof}

\subsection{Mixed Strategies on a Set of Targeting Groups}
Next we consider a set  $\setT = \{1,....,k\}$ of targeting groups, and provide a lower bound on the cost $C(s)$ of any mixed strategy $s \in \setS$ to get a total of $I_t$ impressions from these targeting groups. More precisely, we have the following result.

\begin{proposition}\label{prop:mixed_strategies}
  Let   $\setT = \{1,....,k\}$ be a set of targeting groups, and let
  $$D_t(b) = \sum_{j=1}^k D_j(b), \qquad b \geq 0.$$
Furthermore, let $b_t$ be the bid value such that
$$\sum_{j=1}^k D_j(b_t) \geq I_t$$
and
$$\sum_{j=1}^k D_j(b) < I_t, \qquad 0 \leq b < b_t.$$
Then for any mixed strategy $s \in \setS$ that obtains a total of $I_t$ impressions from the set of targeting groups $\setT$ we have that
$$C(s) \geq I_t b_t - \int_0^{b_t} D_b(b)db.$$
\end{proposition}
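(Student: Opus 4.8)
My plan is to reduce the statement to a single pointwise inequality comparing the cost of bidding at an arbitrary value $b$ against a linearized ``reference'' valuation evaluated at the threshold $b_t$, and then to aggregate this inequality over all bid values and all targeting groups used by $s$. Write the mixed strategy $s$ as a family of bid-value sets $\setp_j$ together with fractions $\gamma_j(b)$, $b \in \setp_j$, for each group $j \in \setT$, subject to the per-group budget constraint $\sum_{b \in \setp_j} \gamma_j(b) \le 1$ and the total-impressions constraint $\sum_{j=1}^k \sum_{b \in \setp_j} \gamma_j(b) D_j(b) = I_t$. Its cost is $C(s) = \sum_{j} \sum_{b \in \setp_j} \gamma_j(b)\big[D_j(b) b - \int_0^b D_j(x)\,dx\big]$.

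The first and central step is to establish, for every group $j$ and every bid value $b \ge 0$, the pointwise bound
$$D_j(b)\, b - \int_0^b D_j(x)\,dx \ \ge\ D_j(b)\, b_t - \int_0^{b_t} D_j(x)\,dx.$$
This is equivalent to $D_j(b)(b - b_t) \ge \int_{b_t}^b D_j(x)\,dx$, i.e. to $\int_{b_t}^b \big[D_j(b) - D_j(x)\big]\,dx \ge 0$, which follows from monotonicity of $D_j$ (Assumption~\ref{ass:supply_curves}) by checking the cases $b \ge b_t$ and $b < b_t$ separately: in both, the sign of the integrand and the orientation of the integral match. This is the technical heart of the argument; it is elementary and, importantly, uses only that the supply curves are non-decreasing, with no continuity assumed.

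The second step is routine aggregation. Multiplying the pointwise bound by $\gamma_j(b) \ge 0$ and summing over $b \in \setp_j$ and over $j$, the leading terms collect via the total-impressions constraint into $b_t I_t$, so that
$$C(s) \ \ge\ b_t I_t - \sum_{j=1}^k \left(\sum_{b \in \setp_j} \gamma_j(b)\right)\int_0^{b_t} D_j(x)\,dx.$$
Since each $\int_0^{b_t} D_j(x)\,dx \ge 0$ and each per-group fraction satisfies $\sum_{b \in \setp_j} \gamma_j(b) \le 1$, the subtracted sum is at most $\sum_{j=1}^k \int_0^{b_t} D_j(x)\,dx = \int_0^{b_t} D_t(x)\,dx$, yielding $C(s) \ge I_t b_t - \int_0^{b_t} D_t(x)\,dx$, which is the claim.

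I do not anticipate a serious obstacle beyond getting the pointwise inequality to cover both $b \ge b_t$ and $b < b_t$ uniformly. It is worth noting that the aggregation never invokes the defining relations of $b_t$: the bound in fact holds for any threshold value, and the particular $b_t$ in the statement is simply the one maximizing $I_t b - \int_0^b D_t(b)\,db$ (its derivative $I_t - D_t(b)$ changes sign exactly at the stated crossing point), hence the value giving the tightest lower bound. This streamlines, and parallels, the single-group argument of Proposition~\ref{prop:single_mixed_strategies}, while handling the additivity over the groups in $\setT$ through the per-group budget constraints.
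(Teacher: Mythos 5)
Your proof is correct, and it takes a genuinely different --- and substantially shorter --- route than the paper's. The paper first splits the strategy by targeting group, writing $I_j(s)$ for the impressions $s$ obtains from group $j$, invokes the single-group bound of Proposition~\ref{prop:single_mixed_strategies} to get a per-group lower bound $I_j(s)\,b_j(s) - \int_0^{b_j(s)}D_j(b)\,db$ with a per-group threshold $b_j(s)$, and then proves by induction on the number of groups that these per-group bounds can be merged into the aggregate bound; the induction step orders the thresholds, uses $b(n,s) \le b_{n+1}(s)$, and runs estimates involving the left limits $D^{(-)}_{n+1}(b_{n+1}(s))$. You bypass the per-group decomposition, the single-group proposition, and the induction entirely: your pointwise inequality
$$D_j(b)\,b - \int_0^b D_j(x)\,dx \;\ge\; D_j(b)\,b_t - \int_0^{b_t} D_j(x)\,dx,$$
which indeed holds for every bid $b$ and every reference value by monotonicity alone, reduces the proposition to linear bookkeeping: the total-impressions constraint collects the leading terms into $I_t b_t$, and the per-group constraints $\sum_{b}\gamma_j(b)\le 1$ together with $D_j \ge 0$ cap the subtracted integrals by $\int_0^{b_t}D_t(x)\,dx$. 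Your route buys elementarity and a bit of extra generality: no left limits, no ordering of thresholds, the single-group case is recovered as $k=1$ rather than assumed, and it becomes transparent that the stated $b_t$ is simply the maximizer of $b \mapsto I_t b - \int_0^b D_t(x)\,dx$, so the claimed bound is the tightest member of a whole family of valid bounds. What the paper's route buys is structural information that it reuses elsewhere: the per-group thresholds and the ``merging'' step (shifting impressions between groups cannot lower the aggregate bound) are exactly the devices redeployed in the proofs of Proposition~\ref{prop:sufficient_cond_app} and Proposition~\ref{prop:opt_cost}, so its heavier machinery is amortized over the rest of the appendix. A final small point in your favor: your aggregation goes through verbatim if the strategy obtains at least $I_t$ impressions (since $b_t \ge 0$), which is the form in which the bound is actually invoked later.
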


\begin{proof}
Let $s \in \setS$ be a mixed strategy that obtains $I_t$ impressions from the set  $\setT = \{1,....,k\}$ of targeting groups, and let
$$I_j(s), \qquad j=1,...,k,$$
be the number of impressions that strategy $s$ obtains from targeting group $j$.

Note that in order to prove the result of the proposition, it suffices to show
that for $n=1,2,...,k$, the cost for the strategy $s$ to get a total of
$$I(n,s) = \sum_{j=1}^n I_j(s)$$
impressions from the first $n$ targeting groups is lower-bounded by
$$ I(n,s) b(n,s) - \int_0^{b(n,s)} \sum_{j=1}^n D_j(b)db,$$
where the bid value $b(n,s)$ is such that
$$\sum_{j=1}^n D_j(b(n,s)) \geq I(n,s)$$
and
$$\sum_{j=1}^n D_j(b) < I(n,s), \qquad 0 \leq b < b(n,s).$$

To prove this result, we proceed as follows.
First we note that by Proposition~\ref{prop:single_mixed_strategies} we have for each targeting group $j=1,...,k,$ that the cost for getting $I_j(s)$ impressions from targeting group $j$ is lower-bounded by
$$ I_j(s) b_j(s) - \int_0^{b_j(s)} D_j(b)db,$$
where the bid value $b_j(s)$ is such that
$$D_j(bs) \geq I_j(s)$$
and
$$D_j(b) < I_j(s), \qquad 0 \leq b < b_j(s).$$
We first show that the result is true for $n=2$. Let $C(2,s)$ be the cost for the strategy to get $I(2,s)$ impressions from the first two targeting groups. Without loss of generality, we can assume that
$$ b_1(s) \leq b_2(s).$$
Note that this implies that
$$ b_1(s) \leq b(2,s) \leq b_2(s).$$

Using Proposition~\ref{prop:single_mixed_strategies} we have that
\begin{eqnarray*}
    && C(2,s) - I(2,s)b(2,s) + \int_0^{b(2,s)} \Bsbl D_1(b) + D_2(b) \Bsbr db\\
    &\geq& I_1(s) b_1(s) - \int_0^{b_1(s)} D_1(b) db \\
    && + I_2(s) b_2(s) -  \int_0^{b_2(s)} D_2(b) db \\
  && - I(2,s) b(2,s) +  \int_0^{b(2,s)} \Bsbl D_1(b) + D_2(b) \Bsbr db
\end{eqnarray*}
It then follows that
\begin{eqnarray*}
    && C(2,s) - I(2,s)b(2,s) + \int_0^{b(2,s)} \Bsbl D_1(b) + D_2(b) \Bsbr db\\
    &\geq&  I_1(s) b_1(s) +  I_2(s) b_2(s)  - (I_1(s) + I_2(s))b(2,s) \\
      && + \int_{b_1(s)}^{b(2,s)} D_1(b) db \\
      && - \int_{b(2,s)}^{b_2(s)} D_2(b) db \\
     &\geq&   - I_1(s) (b(2,s) - b_1(s)) \\
     && +  I_2(s) (b_2(s)  -b(2,s)) \\
      && + D_1(b_1(s))(b(2,s) - b_1(s))\\
    &&  - D^{(-)}_2(b_2(s))(b_2(s) - b(2,s)) \\
    &=& +  \Bsbl D_1(b_1(s))- I_1(s) \Bsbr (b(2,s) - b_1(s)) \\
     && +  \Bsbl I_2(s) - D^{(-)}_2(b_2(s)) \Bsbr (b_2(s)  -b(2,s)) \\
  \end{eqnarray*}
As by assumption we have that
$$ b_1(s) \leq b(2,s) \leq b_2(s)$$
as well as
$$  D_1(b_1(s)) > I_1(s)$$
and
$$D_2(b) < I_2(s), \qquad 0 \leq b < b_2(s).$$
it follows that
$$ I_2(s) \geq  D^{(-)}_2(b_2(s))$$
and
$$C(2,s) \geq I(2,s)b(2,s) - \int_0^{b(2,s)} \Bsbl D_1(b) + D_2(b) \Bsbr.$$
This establishes the result for the case of $n=2$.

We can then prove the result by induction. Assume that the result is true for $n$, $n=1,...,k-1$. For this, let the function $D(n,b)$ be given by
$$D(n,b) = \sum_{j=1}^n D_j(b).$$
The result is then also true for $n+1$ as we have that
    \begin{eqnarray*}
    && C(n+1,s) - I(n+1,s)b(n+1,s) + \int_0^{b(n+1,s)} D(n+1,b) db\\
      &\geq&  I(n,s)b(n,s) + \int_0^{b(n,s)} D(n,b) db\\
    && + I_{n+1}(s) b_{n+1}(s) -  \int_0^{b_{n+1}(s)} D_{n+1}(b) db \\
    && - I(n+1,s)b(n+1,s) + \int_0^{b(n+1,s)} D(n+1,b) db\\
    &=&  I(n,s) b_1(s) +  I_{n+1}(s) b_{n+1}(s)  -  I(n+1,s)b(n+1,s) \\
      && + \int_{b(n,s)}^{b(n+1,s)} D(n,b) db \\
      && - \int_{b(n+1,s)}^{b_{n+1}(s)} D_{n+1}(b) db \\
     &\geq&   - I(n,s) (b(n+1,s) - b_1(n,s)) \\
     && +  I_{n+1}(s) (b_{n+1}(s)  -b(n+1,s)) \\
      && + D(n,b(n,s))(b(n+1,s) - b(n,s))\\
    &&  - D^{(-)}_{n+1}(b_{n+1}(s))(b_{n+1}(s) - b(n,s)) \\
    &=&   \Bsbl D(n,b(n,s))- I(n,s) \Bsbr (b(n+1,s) - b(n,s)) \\
      && +  \Bsbl I_{n+1}(s) - D^{(-)}_{n+1}(b_{n+1}(s)) \Bsbr  (b_{n+1}(s)  -b(n+1,s)).  
    \end{eqnarray*}  
As by assumption we have that
$$ b(n,s) \leq b(n+1,s) \leq b_{n+1}(s),$$
as well
$$D(n,b(n,s)) > I(n,s)$$
and
$$D_{n+1}(b) < I_{n+1}(s), \qquad 0 \leq b < b_{n+1}(s),$$
it follows that
$$ I_{n+1}(s) \geq  D^{(-)}_{n+1}(b_{n+1}(s))$$
and
$$C(n+1,s) \geq I(n+1,s)b(n+1,s) - \int_0^{b(n+1,s)} D(n+1,b) db.$$
This establishes the induction step. The result of the proposition then follows.
\end{proof}


\section{Proofs for Section~\ref{sec:single_target}}\label{app:single_target}
In this appendix we prove  Proposition~\ref{prop:single_opt_cont}-\ref{prop:single_opt_alloc} of Section~\ref{sec:single_target}.

\subsection{Proof of Proposition~\ref{prop:single_opt_cont}}\label{app:single_opt_cont}

We start out by proving Proposition~\ref{prop:single_opt_cont} in a more general setting. We have the following result.

\begin{proposition}
Consider the special case where  a single campaign $i=1$ gets  impressions from a single targeting group $j=1$, and let $I_1$ be the total number of impressions that the campaign  is required to obtain.
If there exists a bid value $b^*$ such that
$$D_1(b^*) = I_1,$$
then an optimal bidding strategy in the set of mixed bidding strategies $\setS$ is a pure bidding strategy given by
$$\gamma^*_{1,1}(b^*) = 1.$$
Furthermore, the optimal cost is given by
$$I_1 b^* - \int_0^{b^*} D_1(b) db.$$
\end{proposition}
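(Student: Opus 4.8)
The plan is to combine the lower bound already established for mixed strategies with an explicit achievability argument. First I would pin down the bid value $b^*$. The hypothesis only asserts the existence of \emph{some} $b^*$ with $D_1(b^*) = I_1$, whereas the lower-bound result of Proposition~\ref{prop:single_mixed_strategies} is stated for the threshold value defined by Eq.~\eqref{eq:opt_b_1}~and~\eqref{eq:opt_b_2}. I would observe that since $D_1$ is non-decreasing and right-continuous, the threshold $\tilde b = \inf\{b : D_1(b) \geq I_1\}$ satisfies $D_1(\tilde b) \geq I_1$ (right-continuity makes the infimum attained), while $\tilde b \leq b^*$ together with monotonicity gives $D_1(\tilde b) \leq D_1(b^*) = I_1$, hence $D_1(\tilde b) = I_1$. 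Consequently $D_1$ is flat at level $I_1$ on $[\tilde b, b^*]$, so $I_1 b^* - \int_0^{b^*} D_1(b)\,db = I_1 \tilde b - \int_0^{\tilde b} D_1(b)\,db$; the cost expression is therefore independent of which such $b^*$ is used, and I may identify $b^*$ with the threshold value of Eq.~\eqref{eq:opt_b_1}~and~\eqref{eq:opt_b_2}.

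Next I would establish the lower bound for every feasible mixed strategy. Given $s \in \setS$ obtaining at least $I_1$ impressions, there are two cases. If $s$ obtains strictly more than $I_1$ impressions, Lemma~\ref{lemma:mixed_I_1} produces a strategy $s'$ obtaining exactly $I_1$ with $C(s') < C(s)$; otherwise $s$ already obtains exactly $I_1$. In either case there is a strategy obtaining exactly $I_1$ whose cost does not exceed $C(s)$, and Proposition~\ref{prop:single_mixed_strategies} bounds that cost below by $I_1 b^* - \int_0^{b^*} D_1(b)\,db$. Hence $C(s) \geq I_1 b^* - \int_0^{b^*} D_1(b)\,db$ for every feasible $s \in \setS$.

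Finally I would verify achievability. The pure strategy $\gamma^*_{1,1}(b^*) = 1$ lies in $\setS$, obtains $D_1(b^*) = I_1$ impressions (so it is feasible), and by the definition of the cost functional has cost exactly $D_1(b^*) b^* - \int_0^{b^*} D_1(b)\,db = I_1 b^* - \int_0^{b^*} D_1(b)\,db$. Since this matches the lower bound, the pure strategy attains the infimum over $\setS$ and is optimal, with the stated optimal cost.

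I do not expect a serious obstacle, since the heavy lifting is already done by Proposition~\ref{prop:single_mixed_strategies}. The only points requiring care are the reconciliation of the hypothesized $b^*$ with the threshold value of Eq.~\eqref{eq:opt_b_1}~and~\eqref{eq:opt_b_2}, and the fact that feasible strategies obtaining strictly more than $I_1$ impressions cannot undercut the bound — both handled above via monotonicity and right-continuity of $D_1$, and via Lemma~\ref{lemma:mixed_I_1}, respectively.
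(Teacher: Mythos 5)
Your proof is correct and takes essentially the same route as the paper: invoke the mixed-strategy lower bound of Proposition~\ref{prop:single_mixed_strategies}, then observe that the pure strategy $\gamma^*_{1,1}(b^*)=1$ is feasible and attains that bound. The two additional steps you supply --- reconciling the hypothesized $b^*$ with the threshold value of Eq.~\eqref{eq:opt_b_1}~and~\eqref{eq:opt_b_2} via monotonicity and right-continuity, and reducing strategies that over-fulfill $I_1$ to exact ones via Lemma~\ref{lemma:mixed_I_1} --- are details the paper's two-sentence proof leaves implicit, and you handle both correctly.
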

Proposition~\ref{prop:single_opt_cont} states that if there exists a bid value $b^*$ such that
$$D_1(b^*) = I_1,$$
then the optimal bidding strategy is the pure strategy given in the statement of the proposition.

\begin{proof}
  By Proposition~\ref{prop:single_mixed_strategies}, the cost of getting $I_1$ impressions from the targeting group 1 is lower-bounded by
  $$I^*b^* - \int_0^{b^*} D_1(b)db.$$
 Note that the pure strategy given in the statment of the proposition achieves this lower-bound, and hence is an optimal allocation. 
\end{proof}

\subsection{Proof of Proposition~\ref{prop:single_opt_cost}}\label{app:single_opt_cost}
Recall the statement of  Proposition~\ref{prop:single_opt_cost}.
\begin{proposition}
Consider the special case where a single campaign $i=1$ gets impressions from a single targeting group $j=1$, and let $I_1$ be the total number of impressions that that campaign  is required to obtain.
Let $C(s)$ be the cost of a given  mixed bidding strategy $s \in \setS$. Then we have that
$$ C(s) \geq I_1 b^* - \int_0^{b^*} D_1(b) db,$$
where the bid value $b^*$ is as given by Eq.~\eqref{eq:opt_b_1}~and~\eqref{eq:opt_b_2}. 
\end{proposition}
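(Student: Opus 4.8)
The plan is to observe that this statement is, up to the feasibility requirement, identical to Proposition~\ref{prop:single_mixed_strategies} already proved in Appendix~\ref{app:mixed_strategies}, and to reduce to it. Taking $s \in \setS$ to be feasible, i.e. $\sum_{b \in \setp_{1,1}} \gamma_{1,1}(b) D_1(b) \geq I_1$, I would first dispose of the case of exact supply, $\sum_{b \in \setp_{1,1}} \gamma_{1,1}(b) D_1(b) = I_1$: here the bound is precisely the conclusion of Proposition~\ref{prop:single_mixed_strategies}, so nothing further is needed. For a feasible strategy that overshoots, $\sum_{b \in \setp_{1,1}} \gamma_{1,1}(b) D_1(b) > I_1$, I would invoke Lemma~\ref{lemma:mixed_I_1} to produce a strategy $s'$ with $\sum_{b} \gamma'_{1,1}(b) D_1(b) = I_1$ and $C(s') < C(s)$; applying Proposition~\ref{prop:single_mixed_strategies} to $s'$ then yields $C(s) > C(s') \geq I_1 b^* - \int_0^{b^*} D_1(b)\, db$, which is the desired inequality.

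Since the real content resides in Proposition~\ref{prop:single_mixed_strategies}, the second part of the plan records how that bound is obtained, in case a self-contained argument is preferred. After normalizing (via Lemmas~\ref{lemma:mixed_I_1} and~\ref{lemma:mixed_1}) so that $\sum_b \gamma_{1,1}(b) = 1$ and $\sum_b \gamma_{1,1}(b) D_1(b) = I_1$, I would split the support $\setp_{1,1}$ according to whether $D_1(b) < I_1$ or $D_1(b) \geq I_1$. Monotonicity of $D_1$ forces every bid with $D_1(b) < I_1$ to satisfy $b < b^*$ and every bid with $D_1(b) > I_1$ to satisfy $b \geq b^*$. Writing $C(s) - [\,I_1 b^* - \int_0^{b^*} D_1(b)\,db\,]$ term by term as $\sum_b \gamma_{1,1}(b)\big[\, b D_1(b) - b^* I_1 + \int_b^{b^*} D_1(x)\,dx\,\big]$, I would bound the integral from below using $\int_b^{b^*} D_1(x)\, dx \geq D_1(b)(b^* - b)$ for $b < b^*$ and from above using $\int_{b^*}^b D_1(x)\, dx \leq D_1(b)(b - b^*)$ for $b > b^*$. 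Each bracket is then at least $b^*(D_1(b) - I_1)$, and summing with the two normalization identities collapses the whole expression to $b^*(I_1 - I_1) = 0$.

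The step I expect to be the genuine obstacle is the degenerate case in which every bid value with $D_1(b) < I_1$ has zero supply, $D_1(b) = 0$: here Lemma~\ref{lemma:mixed_1} does not apply, so the normalization $\sum_b \gamma_{1,1}(b) = 1$ cannot be assumed and the slack $\Delta_\gamma = 1 - \sum_b \gamma_{1,1}(b)$ must be carried through the estimate explicitly, as is done in the proof of Proposition~\ref{prop:single_mixed_strategies}. The other delicate point is that these integral estimates use only monotonicity of $D_1$ and never continuity, which is exactly what keeps the bound valid and tight at a discontinuity; I would therefore keep the one-sided limit $D^{(-)}_1(b^*)$ in view so that the inequalities survive a jump of $D_1$ at $b^*$.
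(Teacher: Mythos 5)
Your proposal is correct and takes essentially the same route as the paper: the paper proves this statement by reducing it immediately to Proposition~\ref{prop:single_mixed_strategies}, whose proof proceeds exactly as you sketch --- normalization via Lemmas~\ref{lemma:mixed_I_1}~and~\ref{lemma:mixed_1}, the split of the support into bids with $D_1(b) < I_1$ and $D_1(b) > I_1$, monotonicity-based integral bounds collapsing each bracket to $b^*(D_1(b) - I_1)$, and the separate degenerate case carrying the slack $\Delta_\gamma$ explicitly. Your explicit treatment of strategies that overshoot $I_1$ (via Lemma~\ref{lemma:mixed_I_1}) is a minor completeness improvement over the paper's one-line reduction, but not a different approach.
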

This result follows immediately from Proposition~\ref{prop:single_mixed_strategies}.

\subsection{Proof of Proposition~\ref{prop:single_opt_alloc}}\label{app:single_opt_alloc}

Recall the statement of Proposition~\ref{prop:single_opt_alloc}.

\begin{proposition}
  Consider the special case where a single campaign $i=1$ gets impressions from a single targeting group $j=1$, and let $I_1$ be the total number of impressions that the campaign  is required to obtain.
Let the bid value $b^*$ be as given by Eq.~\eqref{eq:opt_b_1}~and~\eqref{eq:opt_b_2} and suppose that
$$D^{(-)}(b^*) > 0.$$
Furthermore, let the mixed bidding strategy $s(b_1) \in \setS$, $0 \leq b_1 < b^*$, be given by
$$\setp_{1,1} = \{b_1,b^*\},$$
and 
$$\gamma_{1,1}(b_1) = \frac{D_1(b^*) -I_1}{D_1(b^*) - D_1(b_1)} \; \mbox{ and } \;
\gamma_{1,1}(b^*) = 1- \gamma_{1,1}(b_1).$$
Then we have that
$$\lim_{b_1 \uparrow b^*} C(s(b_1)) =  I_1 b^* - \int_0^{b^*} D_1(b) db.$$
\end{proposition}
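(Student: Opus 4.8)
The plan is to substitute the two-point strategy directly into the cost functional of Section~\ref{ssec:single_mixed} and evaluate the limit term by term. First I would specialize the cost formula to $\setp_{1,1} = \{b_1,b^*\}$, writing
$$C(s(b_1)) = \gamma_{1,1}(b_1)\left[ D_1(b_1) b_1 - \int_0^{b_1} D_1(b)\, db \right] + \gamma_{1,1}(b^*)\left[ D_1(b^*) b^* - \int_0^{b^*} D_1(b)\, db \right].$$
As a preliminary sanity check I would verify feasibility, namely that the explicit weights satisfy $\gamma_{1,1}(b_1) + \gamma_{1,1}(b^*) = 1$ and $\gamma_{1,1}(b_1) D_1(b_1) + \gamma_{1,1}(b^*) D_1(b^*) = I_1$; both follow from a one-line algebraic identity, and they simultaneously confirm the weights lie in $[0,1]$ since $D_1(b_1) < I_1 \leq D_1(b^*)$ by Eq.~\eqref{eq:opt_b_1}~and~\eqref{eq:opt_b_2}.

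Second, I would push $b_1 \uparrow b^*$ through each ingredient. The integral term is the easy one: $\int_0^{b_1} D_1(b)\,db \to \int_0^{b^*} D_1(b)\,db$, because the indefinite integral of a bounded non-decreasing function is continuous, regardless of whether $D_1$ is continuous at $b^*$. The delicate ingredient is the point value $D_1(b_1) b_1$: here I would invoke the definition $D^{(-)}(b^*) = \lim_{b\uparrow b^*} D_1(b)$ to get $D_1(b_1) \to D^{(-)}(b^*)$, so that $D_1(b_1) b_1 \to D^{(-)}(b^*)\, b^*$. Consequently the weights converge to
$$\gamma_{1,1}(b_1) \to \frac{D_1(b^*)-I_1}{D_1(b^*)-D^{(-)}(b^*)}, \qquad \gamma_{1,1}(b^*) \to \frac{I_1 - D^{(-)}(b^*)}{D_1(b^*)-D^{(-)}(b^*)},$$
which are finite and well-defined precisely because the jump $D_1(b^*) - D^{(-)}(b^*)$ is strictly positive. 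Since each factor converges, the products $\gamma_{1,1}(b_1) g(b_1)$ and $\gamma_{1,1}(b^*) g(b^*)$ converge to the products of the limits.

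Finally I would collect terms. Because the two limiting weights sum to $1$, the two copies of $-\int_0^{b^*} D_1(b)\,db$ merge into a single such term, and the remaining contribution is $b^*$ times the limiting weighted supply, which by the same identity used in the feasibility check equals $I_1$:
$$\frac{(D_1(b^*)-I_1)\,D^{(-)}(b^*) + (I_1 - D^{(-)}(b^*))\,D_1(b^*)}{D_1(b^*)-D^{(-)}(b^*)} = I_1.$$
This yields $\lim_{b_1 \uparrow b^*} C(s(b_1)) = I_1 b^* - \int_0^{b^*} D_1(b)\,db$, as claimed. I expect the only genuinely subtle point to be the asymmetric use of continuity: the integral term passes to the limit by ordinary continuity, while the point value $D_1(b_1)$ must be sent to the \emph{left} limit $D^{(-)}(b^*)$ rather than to $D_1(b^*)$ (the right-continuity of Assumption~\ref{ass:supply_curves} is irrelevant here). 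The hypothesis $D^{(-)}(b^*) > 0$ plays the mild supporting role of guaranteeing $D_1(b_1) > 0$ for $b_1$ near $b^*$, so the construction genuinely uses a nonzero second supply level, and it never threatens the denominator, which is controlled by the strict jump $D^{(-)}(b^*) < D_1(b^*)$.
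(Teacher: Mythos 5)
Your proof is correct, and while it starts from the same point as the paper's proof (substitute the two-point strategy into the cost functional, verify feasibility via $\gamma_{1,1}(b_1)D_1(b_1)+\gamma_{1,1}(b^*)D_1(b^*)=I_1$, and let $b_1\uparrow b^*$), it executes the limit in a genuinely different way. The paper never takes factor-by-factor limits: it uses the feasibility identity to rewrite the cost \emph{exactly} as
$$C(s(b_1)) = I_1 b^* - \int_0^{b^*}D_1(b)\,db + \gamma_{1,1}(b_1)D_1(b_1)(b_1-b^*) + \gamma_{1,1}(b_1)\int_{b_1}^{b^*}D_1(b)\,db,$$
and then observes that the two trailing error terms vanish as $b_1\uparrow b^*$, using only the \emph{boundedness} of $\gamma_{1,1}(b_1)$ and $D_1(b_1)$. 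Your route instead sends each ingredient to its own limit ($D_1(b_1)\to D^{(-)}(b^*)$, and the weights to $\frac{D_1(b^*)-I_1}{D_1(b^*)-D^{(-)}(b^*)}$ and $\frac{I_1-D^{(-)}(b^*)}{D_1(b^*)-D^{(-)}(b^*)}$) and recombines algebraically. What your version buys is an explicit description of the limiting mixture --- the optimal ``split'' of the atom at $b^*$ --- which the paper's proof never exhibits and which is conceptually illuminating. What it costs is a requirement the paper's argument does not have: your limiting weights divide by the jump $D_1(b^*)-D^{(-)}(b^*)$, so you must know it is strictly positive. You assert this but do not derive it; the one-line justification is that this proposition lives in the regime where no bid value satisfies $D_1(b)=I_1$, so Eq.~\eqref{eq:opt_b_1}~and~\eqref{eq:opt_b_2} give $D^{(-)}(b^*)\le I_1 < D_1(b^*)$. (In the excluded boundary case $D_1(b^*)=I_1$ your limiting-weight expressions degenerate to $0/0$, whereas the actual weights are identically $\gamma_{1,1}(b_1)=0$ and the claim is trivial; the paper's bounded-error-term argument handles both cases uniformly precisely because it never needs the weights to converge.) Your reading of the hypothesis $D^{(-)}(b^*)>0$ --- that it guarantees the second supply level is genuinely nonzero rather than protecting any denominator --- matches its role in the paper.
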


\begin{proof}

 To prove Proposition~\ref{prop:single_opt_alloc} we construct a sequence of bidding strategies  $s^{(n)}$ given as follows. 
  Consider a sequence of bid values $b_n$, $n \geq 1$, such that
  $$b_n < b_{n+1}$$
  and
  $$\lim_{n \to \infty} b_n = b^*.$$
  The strategy  $s^{(n)}$ is then given by
  $$\setp^{(n)}_{1,1} = \{ b_n, b^*\},$$
  as well as
  $$ \gamma^{(n)}(b_n) = \frac{D_1(b^*) - I_1}{D_1(b^*) - D_1(b_n)}$$
  and
  $$\gamma^{(n)}(b^*) = 1 -  \gamma^{(n)}(b).$$
  By the assumptions made in the statement of the proposition we have that
  $$ \gamma^{(n)}(b_n) = \frac{D_1(b^*) - I_1}{D_1(b^*) - D_1(b_n)} < 1,$$
  and therefore
  $$ \gamma^{(n)}(b_n) D_1(b_n) + \gamma^{(n)}(b^*)D_1(b^*) = I_1.$$

  Note that the bidding strategy $s^{(n)}$ with $b_n$ is equal to the bidding strategy $s(b_1 = b_n)$ given in the statement of the proposition.
  
  The cost $C(s^{(n)})$ of strategy $s^{(n)}$ is given by
  \begin{eqnarray*}
    C(s^{(n)}) &=&  \gamma^{(n)}(b_n) \left [  D_1(b_n)b_n - \int_0^{b_n} D_1(b)db \right ]\\
   && + (1 -  \gamma^{(n)}(b_n))  \left [  D_1(b^*)b^* - \int_0^{b^*} D_1(b)db \right ] \\
    &=&  \gamma^{(n)}(b_n) D_1(b_n)b_n + (1 -  \gamma^{(n)}(b_n)) D_1(b^*)b^* \\
    && - \int_0^{b^*} D_1(b)db \\
    && +  \gamma^{(n)}(b_n)  \int_{b_n}^{b^*} D_1(b)db\\
    &=&  \gamma^{(n)}(b_n) D_1(b_n)b^* + (1 -  \gamma^{(n)}(b_n)) D_1(b^*)b^* \\
    && + \gamma^{(n)}(b_n) D_1(b_n) (b_n - b^*) \\
    && - \int_0^{b^*} D_1(b)db \\
    && +  \gamma^{(n)}(b_n)  \int_{b_n}^{b^*} D_1(b)db\\
    &=&  I_1 b^* - \int_0^{b^*} D_1(b)db \\
    && + \gamma^{(n)}(b_n) D_1(b_n) (b_n - b^*) \\
    && +  \gamma^{(n)}(b_n)  \int_{b_n}^{b^*} D_1(b)db.
  \end{eqnarray*}
  We then obtain that
   \begin{eqnarray*} 
     && \lim_{n \to \infty}  C(s^{(n)}) \\
     &=&  I_1b^* - \int_0^{b^*} D_1(b)db \\
     &&  + \lim_{b_n \to b^*} \gamma^{(n)}(b_n) D_1(b_n) (b_n - b^*) \\
    && + \lim_{b_n \to b^*}   \gamma^{(n)}(b_n)  \int_{b_n}^{b^*} D_1(b)db \\
    &=&   I_1b^* - \int_0^{b^*} D_1(b)db.
  \end{eqnarray*}
The result of the proposition then follows.

\end{proof}

\section{Properties of an Optimal Allocation}\label{app:opt_solution}

In this appendix we derive properties of an optimal solution $(\opt)$  for the optimization problem given by Eq.~\eqref{eq:opt_problem} for the case where the supply curves are given by continuous functions, and provide the proofs for  results in Section~\ref{sec:opt_solution}. To help the reader we repeat here the results of Section~\ref{sec:opt_solution} in order to keep the flow of the argument.

We proceed as follows. We first provide in Subsection~\ref{app:opt_bids} and Subsection~\ref{app:decomposition} the  for  results in Section~\ref{sec:opt_solution}. Using these results, we derive in Subsection~\ref{ssec:sufficient_cond} sufficient conditions for an allocation $(\opt)$ to be an optimal solution.

\subsection{Properties of the Bid Values of an Optimal Solution}\label{app:opt_bids}

In this subsection we derive structural properties of the bid values $b^*$ of an optimal solution  for the optimization problem given by Eq.~\eqref{eq:opt_problem} for the case where the supply curves are given by continuous functions.
 For this we use the following notation.
Let $(\opt)$ be a solution to the the optimization given by Eq.~\eqref{eq:opt_problem}. Then for each campaign
$$i \in \setN =  \{1,...,N_c\}$$
the set $\setA_i(\optbg)$ is given by 
$$\setA_i(\optbg) = \{j \in \setA_i | \gamma^*_{i,j} D_j(b^*_{i,j}) > 0\}.$$
Or in other words, the set $\setA_i(\optbg)$ contains all the targeting groups $j$ from which campaign receives a positive number  $\gamma^*_{i,j} D_j(b^*_{i,j})$ of impressions under the solution $(\opt)$. 

Similarly, for each targeting group $j=1,...,N_r$ the set $\setB_j(\optbg)$ is given by
$$\setB_j(\optbg) = \{i \in \setB_j |  \gamma^*_{i,j} D_j(b^*_{i,j}) > 0 \}.$$

The next lemma provides a necessary condition for a solution $(\opt)$  to be an optimal solution for the optimization problem given by Eq.~\eqref{eq:opt_problem}.
\begin{lemma}\label{lemma:meet_demand}
If the supply curves $D_j(b)$, $j\in \setT$, are given by continuous functions, then the following is true.
If $(\opt)$ is an optimal solution to the optimization problem given by Eq.~\eqref{eq:opt_problem}, then we have for each campaign $i=1,...,N_c$ that
$$ \sum_{j \in \setA_i(\optbg)} \gamma^*_{i,j} D_j(b^*_{i,j}) = I_i.$$
\end{lemma}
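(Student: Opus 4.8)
The plan is to argue by contradiction via a local perturbation that exploits any slack in the impression constraint. Suppose $(\opt)$ is optimal but some campaign $i_0$ has strict slack, i.e.\ $\sum_{j \in \setA_{i_0}(\optbg)} \gamma^*_{i_0,j} D_j(b^*_{i_0,j}) = I_{i_0} + \Delta$ with $\Delta > 0$. Since the left-hand side is then positive while $I_{i_0} \ge 0$, the set $\setA_{i_0}(\optbg)$ is nonempty; I would fix one group $j_0 \in \setA_{i_0}(\optbg)$, so that $\gamma^*_{i_0,j_0} > 0$ and $D_{j_0}(b^*_{i_0,j_0}) > 0$. The goal is to shed a positive amount of impressions from this campaign–group pair, remaining feasible, while strictly lowering the cost, thereby contradicting optimality.

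\textbf{Perturbation step.} I would lower the bid $b^*_{i_0,j_0}$ to some value $b' < b^*_{i_0,j_0}$. Because $D_{j_0}$ is continuous (this is where the continuity hypothesis enters), the contribution $\gamma^*_{i_0,j_0} D_{j_0}(b)$ varies continuously in $b$, so by the intermediate value theorem I can pick $b'$ so that campaign $i_0$'s total impressions decrease by a positive amount not exceeding $\Delta$, keeping the total at least $I_{i_0}$ (iterating over several groups if one group cannot absorb enough). Crucially, this perturbation changes nothing else: the allocations $\gamma^*$ are untouched, so the capacity constraints $\sum_{i \in \setB_j} \gamma_{i,j} \le 1$ still hold, and the bids of all other campaigns are separate decision variables, so no other campaign's impression count moves. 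Hence the perturbed point is still feasible for Eq.~\eqref{eq:opt_problem}.

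\textbf{Cost decrease.} The only cost term that changes is the $(i_0,j_0)$ term, and by integration by parts its change equals $-\gamma^*_{i_0,j_0}\int_{b'}^{b^*_{i_0,j_0}} b\, dD_{j_0}(b) \le 0$. I would argue it is \emph{strictly} negative: since the impressions actually dropped we have $D_{j_0}(b') < D_{j_0}(b^*_{i_0,j_0})$, so the Stieltjes measure $dD_{j_0}$ carries positive mass on $(b', b^*_{i_0,j_0}]$, and on that interval the integrand satisfies $b \ge b' > 0$. This gives a feasible point of strictly smaller cost than $(\opt)$, the desired contradiction.

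\textbf{Main obstacle.} The delicate point is precisely the strictness of the cost decrease, which requires $b' > 0$ together with positive supply mass at positive bids. This can fail only in the degenerate situation where the shed impressions are available for free, i.e.\ the supply is concentrated at bid $0$ (so that $D_{j_0}(0) = D_{j_0}(b^*_{i_0,j_0})$): then lowering the bid removes no impressions, and reducing $\gamma^*_{i_0,j_0}$ removes them at zero marginal cost, so no contradiction arises. I would rule this out using the natural convention that a zero bid wins no impressions, $D_j(0) = 0$ for every $j \in \setT$; under it, any $j_0 \in \setA_{i_0}(\optbg)$ has $D_{j_0}(0) = 0 < D_{j_0}(b^*_{i_0,j_0})$, which forces the chosen $b'$ to be strictly positive and the marginal cost to be strictly positive, so the perturbation argument closes.
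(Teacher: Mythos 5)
Your proof is correct in substance, but it takes a genuinely different route from the paper's. The paper's justification (given only as a one-sentence remark after the lemma) perturbs the \emph{allocation} variables rather than the bids: if campaign $i$ has slack, scale down the fractions $\gamma^*_{i,j}$, $j \in \setA_i(\optbg)$, until the constraint is tight. Since the cost is linear in $\gamma_{i,j}$ with nonnegative coefficient $D_j(b^*_{i,j})b^*_{i,j} - \int_0^{b^*_{i,j}} D_j(b)\,db = \int_0^{b^*_{i,j}} b\, dD_j(b)$, and feasibility is trivially preserved (the capacity constraints only get looser), this sheds the slack with a cost reduction proportional to the shed impressions --- no intermediate value theorem is needed and, notably, no continuity of $D_j$ either, whereas your bid-lowering argument genuinely uses the continuity hypothesis. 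What your write-up buys in exchange is an honest treatment of the one obstacle \emph{both} arguments share: strictness of the cost decrease fails exactly when the shed impressions are free, i.e.\ when the supply mass sits at bid $0$. Indeed, without a convention like $D_j(0)=0$ the lemma is false as stated (a constant supply curve $D_1 \equiv c > I_1$ makes every feasible point optimal with cost $0$, including ones with slack); the paper's remark silently assumes this degenerate case away, while you name it and patch it. One small loose end in your version: to guarantee $b' > 0$ you should shed strictly less than the full amount $\gamma^*_{i_0,j_0} D_{j_0}(b^*_{i_0,j_0})$, so that $D_{j_0}(b') > 0$; alternatively, even if $b' = 0$, continuity at $0$ together with $D_{j_0}(0)=0$ still gives $\int_{(0,b^*_{i_0,j_0}]} b\, dD_{j_0}(b) > 0$, so the strict decrease survives either way.
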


Lemma~\ref{lemma:meet_demand} states that if  $(\opt)$ is an optimal solution to the optimization problem given by Eq.~\eqref{eq:opt_problem}, then we have for each campaign $i=1,...,N_c$ that the acquired number of impressions for campaign $i$ meets the required number of impressions $I_i$ of campaign $i$.
Note that if this is not the case, then we can lower the cost for campaign $i$ by decreasing the total number of impressions that are acquired (by lowering the allocation variables $\gamma^*_{i,j}$, $j \in \setA_i$).


\begin{proposition}\label{prop:q_j}
If the supply curves $D_j(b)$, $j\in \setT$, are given by continuous functions, then the following is true.
Let $(\opt)$ be an optimal solution to the optimization problem given by Eq.~\eqref{eq:opt_problem}. Then for each targeting group $j=1,...,N_r$ there exists a bid value $q^*_j$ such that
$$ D_j(b^*_{i,j}) = D_j(q^*_j), \qquad i \in \setB_j(\optbg).$$
\end{proposition}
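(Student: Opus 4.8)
The plan is to argue by contradiction using a local exchange \emph{inside} the single targeting group $j$. Suppose $(\opt)$ is optimal but, contrary to the claim, there are two campaigns $i,i' \in \setB_j(\optbg)$ with $d_i := D_j(b^*_{i,j}) \neq d_{i'} := D_j(b^*_{i',j})$; relabel so that $d_i < d_{i'}$. Because $i,i' \in \setB_j(\optbg)$, both receive a strictly positive number of impressions from $j$, so $\gamma^*_{i,j},\gamma^*_{i',j} > 0$ and $d_i,d_{i'} > 0$. Write $n_i = \gamma^*_{i,j} d_i$ and $n_{i'} = \gamma^*_{i',j} d_{i'}$ for the impressions each draws from $j$, and set $N = n_i + n_{i'}$ and $\Gamma = \gamma^*_{i,j} + \gamma^*_{i',j}$.

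The first step is to collapse the two distinct bid values into a single common one. Since $N/\Gamma$ is a strict convex combination of $d_i$ and $d_{i'}$ we have $d_i < N/\Gamma < d_{i'}$, and because $D_j$ is continuous the intermediate value theorem produces a bid value $q$ with $b^*_{i,j} < q < b^*_{i',j}$ and $D_j(q) = N/\Gamma$; this is the only place where continuity is genuinely needed. I then define a perturbed allocation that agrees with $(\opt)$ everywhere except at the pairs $(i,j)$ and $(i',j)$, where I set both bid values equal to $q$ and take $\tilde\gamma_{i,j} = n_i/D_j(q)$ and $\tilde\gamma_{i',j} = n_{i'}/D_j(q)$.

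Next I would check feasibility, which is immediate: each of the two campaigns still obtains exactly $n_i$ (resp.\ $n_{i'}$) impressions from $j$, so every campaign's demand constraint is untouched, while $\tilde\gamma_{i,j} + \tilde\gamma_{i',j} = N/D_j(q) = \Gamma$, so the fraction used at group $j$ is unchanged and $\sum_{m \in \setB_j}\gamma_{m,j} \le 1$ still holds. Hence the perturbation is again a solution of Eq.~\eqref{eq:opt_problem}.

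Finally, and this is the crux, I would show the cost strictly drops. Subtracting the new cost from the old and using $N = \gamma^*_{i,j}d_i + \gamma^*_{i',j}d_{i'}$ and $\Gamma = \gamma^*_{i,j}+\gamma^*_{i',j}$, the difference collapses to
$$\gamma^*_{i,j}\Big[\int_{b^*_{i,j}}^{q} D_j(b)\,db - d_i\,(q - b^*_{i,j})\Big] + \gamma^*_{i',j}\Big[d_{i'}\,(b^*_{i',j}-q) - \int_{q}^{b^*_{i',j}} D_j(b)\,db\Big].$$
Since $D_j$ is non-decreasing, $D_j \ge d_i$ on $[b^*_{i,j},q]$ and $D_j \le d_{i'}$ on $[q,b^*_{i',j}]$, so both bracketed terms are non-negative; moreover $D_j(q) = N/\Gamma > d_i$ together with continuity forces $D_j$ to strictly exceed $d_i$ on a subinterval of $[b^*_{i,j},q]$, making the first term strictly positive. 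Thus the old cost exceeds the new cost, contradicting optimality of $(\opt)$. Consequently no two campaigns in $\setB_j(\optbg)$ can have different values of $D_j(b^*_{\cdot,j})$, so these values all coincide; taking $q^*_j = b^*_{i_0,j}$ for any $i_0 \in \setB_j(\optbg)$ yields the claim. The main obstacle is precisely this cost computation — verifying the telescoping to two monotonicity-controlled terms — whereas the feasibility check and the reduction from the pairwise statement to the full group are routine.
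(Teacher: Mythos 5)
Your proof is correct and follows essentially the same route as the paper's: a pairwise exchange argument by contradiction, using the intermediate value theorem (the only use of continuity) to merge the two bid values into a single $q$ that preserves both campaigns' impressions and the total fraction allocated at group $j$, then monotonicity of $D_j$ to show the cost strictly drops. If anything, your justification of strictness (continuity plus $D_j(q) > d_i$ forcing $D_j$ to strictly exceed $d_i$ on a subinterval) is more careful than the paper's, which asserts the strict inequality directly from the monotonicity bounds.
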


\begin{proof}
  We prove the lemma by contradiction. That is we assume that $(\alloc)$ is an optimal solution to the optimization problem given by Eq.~\eqref{eq:opt_problem}, and there exists a targeting group $j$, $j=1,...,N_r$, and two campaigns $m,n \in \setB_j(\allocbg)$ such that
$$D_j(b_{m,j}) < D_j(b_{n,j}).$$
Note that this implies that
$$b_{m,j} < b_{n,j}.$$
We then construct a new allocation $(\opt)$ that is a feasible solution to the  optimization problem given by Eq.~\eqref{eq:opt_problem}, and leads to a strictly lower overall cost compared with the allocation $\allocbg)$. Note that proving this result suffices to prove the proposition as this leads to a contradiction with the assumption that $(\allocbg)$ is an optimal allocation.

To construct the new allocation $(\opt)$  we use the following definitions. Let 
$$ I_{m,j} = \gamma_{m,j} D_j(b_{m,j})$$
and
$$ I_{n,j} = \gamma_{n,j} D_j(b_{n,j}).$$
Furthermore, let
$$I_t =  I_{m,j} +  I_{n,j},$$
and  let the bid value $q^*_j$ be such that
$$( \gamma_{m,j} +  \gamma_{n,j}) D_j(q^*_j) = I_t.$$
As we have that
$$D_j(b_{m,j}) < D_j(b_{n,j}),$$
a bid value $q^*_j$ with the above property exists and we have that
$$D_j(b_{m,j}) < D_j(q^*_j) < D_j(b_{n,j})$$
and
$$b_{m,j} < q^*_j < b_{n,j}.$$

Using these definitions, let $(\opt)$ be the allocation that is identical to $(\alloc)$, expect that
$$b^*_{m,j} = q^*_j$$
and
$$b^*_{n,j} = q^*_j,$$
as well as
$$  \gamma^*_{m,j} =  \frac{I_{m,j}}{D_j(q^*_j) }$$
and
$$  \gamma^*_{n,j} =  \frac{I_{n,j}}{D_j(q^*_j) }.$$
Compared with the allocation $(\alloc)$, the new allocation $(\opt)$ only affects (changes) the allocations and bid values of campaign $m$ and $n$ at targeting group $j$.

As by construction we have that
$$  (\gamma_{m,j} +  \gamma_{n,j}) D_j(q^*_j)  = I_{m,j} + I_{n,j},$$
we have that
$$\gamma^*_{m,j} + \gamma^*_{n,j} =  \gamma_{m,j} +  \gamma_{n,j}.$$
Furthermore, by construction we have that
$$\gamma^*_{m,j} D_j(q^*_j) = I_{m,j}.$$
Similarly, we have that
$$\gamma^*_{n,j} D_j(q^*_j) =  I_{n,j}.$$
These two results show that under the allocation $(\opt)$ both campaign $m$ and $n$ receive the required number of impressions $I_{m,j}$, and $I_{n,j}$, respectively, from the targeting group $j$. It then follows that  the allocation $(\opt)$ is again a feasible solution to the optimization problem given by Eq.~\eqref{eq:opt_problem}.

Therefore in order to show that the allocation  $(\opt)$ leads to a strictly lower cost compared with the allocation $(\allocbg)$, it suffices to show that 
\begin{eqnarray*}
&& \left [ I_{m,j} b_{m,j} - \gamma_{m,j}\int_0^{b_{m,j}} D_j(b) db \right ] \\ 
&& + \left [ I_{n,j} b_{n,j} -  \gamma_{n,j}\int_0^{b_{n,j}}D_j(b) db \right ]\\ 
&>&  I_t q^*_j  -   (\gamma^*_{m,j} + \gamma^*_{n,j}) \int_0^{q^*_j}D_j(b) db\\ 
&=&  ( I_{m,j} +  I_{n,j}) q^*_j  -   (\gamma_{m,j} + \gamma_{n,j}) \int_0^{q^*_j}D_j(b) db, 
\end{eqnarray*}
or
\begin{eqnarray*}
&& I_{m,j} (b_{m,j} - q^*_j) - \gamma_{m,j}\int_{b_{m,j}}^{q^*_j} D_j(b) db  \\ 
&&  + I_{n,j} (b_{n,j} - q^*_j) -  \gamma_{n,j}\int_{q^*_j}^{b_{n,j}}D_j(b) db \\ 
&>& 0.
\end{eqnarray*}
We have that
\begin{eqnarray*}
&& I_{m,j} (b_{m,j} - q^*_j) - \gamma_{m,j}\int_{b_{m,j}}^{q^*_j} D_j(b) db  \\ 
&&  + I_{n,j} (b_{n,j} - q^*_j) -  \gamma_{n,j}\int_{q^*_j}^{b_{n,j}}D_j(b) db \\ 
&>& I_{m,j} (b_{m,j} - q^*_j) + \gamma_{m,j}D_j(b_{m,j} (q^*_j - b_{m,j})  \\ 
&&  + I_{n,j} (b_{n,j} - q^*_j) -  \gamma_{n,j}D_j(b_{n,j} (b_{n,j} - q^*_j) \\
&=&  I_{m,j} (b_{m,j} - q^*_j) + I_{m,j} (q^*_j - b_{m,j})  \\ 
&&  + I_{n,j} (b_{n,j} - q^*_j) -I_{n,j} (b_{n,j} - q^*_j) \\
&=0.
\end{eqnarray*}
This implies that the allocation $(\opt)$ achieves a strictly lower cost compared to the allocation $(\allocbg)$.  This leads to a contradiction as we assumed that the allocation $(\allocbg)$ is an optimal allocation. The result of the lemma then follows.
\end{proof}

The following corollary follows immediately from Proposition~\ref{prop:q_j}.

\begin{corollary}\label{cor:b_j}
Suppose that the supply curves $D_j(b)$, $j \in \setT$, are given by continuous functions. Then there exists an optimal solution  $\opt$ to the optimization problem given by Eq.~\eqref{eq:opt_problem} such that the following is true. For each targeting group $j=1,...,N_r$ we have that
$$ b^*_{i,j} =  q^*_j, \qquad i \in \setB_j(\optbg).$$
\end{corollary}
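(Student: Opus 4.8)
The plan is to take the optimal solution $(\optbg)$ whose existence is guaranteed by Proposition~\ref{prop:q_j}, and to modify only its bid values, leaving every allocation fraction $\gamma^*_{i,j}$ untouched, so that every campaign active on a targeting group uses the common bid value $q^*_j$; the goal is then to argue that neither feasibility nor cost is affected. Concretely, for each targeting group $j$ and each $i \in \setB_j(\optbg)$ I would replace $b^*_{i,j}$ by $q^*_j$, where $q^*_j$ is the bid value from Proposition~\ref{prop:q_j} satisfying $D_j(b^*_{i,j}) = D_j(q^*_j)$ for all such $i$. Campaigns with $i \in \setB_j \setminus \setB_j(\optbg)$ contribute zero impressions and zero cost at group $j$, so they may be left unchanged, and the corollary only asserts the property for $i \in \setB_j(\optbg)$ in any case.

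First I would verify feasibility of the modified allocation. Because $D_j(q^*_j) = D_j(b^*_{i,j})$ for every $i \in \setB_j(\optbg)$, the number of impressions $\gamma^*_{i,j} D_j(b^*_{i,j})$ obtained by each campaign from each group is unchanged, so the demand constraints $\sum_{j \in \setA_i} \gamma^*_{i,j} D_j(b^*_{i,j}) \geq I_i$ continue to hold; the capacity constraints $\sum_{i \in \setB_j} \gamma^*_{i,j} \leq 1$ hold because no allocation fraction is modified; and nonnegativity is trivially preserved. Hence the modified allocation is again a solution to Eq.~\eqref{eq:opt_problem}.

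The crux, and the one computation worth isolating, is that the cost is unchanged. Writing $f_j(b) = D_j(b) b - \int_0^b D_j(x)\,dx$ for the per-unit cost at group $j$, I would show $f_j(b^*_{i,j}) = f_j(q^*_j)$ whenever $D_j(b^*_{i,j}) = D_j(q^*_j)$. Assuming without loss of generality $b^*_{i,j} \le q^*_j$, monotonicity of $D_j$ forces $D_j$ to be constant, say equal to $d$, on $[b^*_{i,j}, q^*_j]$, and therefore
$$f_j(q^*_j) - f_j(b^*_{i,j}) = d\,(q^*_j - b^*_{i,j}) - \int_{b^*_{i,j}}^{q^*_j} D_j(b)\,db = d\,(q^*_j - b^*_{i,j}) - d\,(q^*_j - b^*_{i,j}) = 0.$$
Since only the bid values are altered, and only within such flat stretches of the supply curve, every term $\gamma^*_{i,j} f_j(b^*_{i,j})$ of the objective is preserved, so the total cost of the modified allocation equals that of $(\optbg)$.

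Finally I would conclude: the modified allocation is feasible, has the same cost as the optimal $(\optbg)$, hence is itself optimal, and by construction satisfies $b^*_{i,j} = q^*_j$ for all $i \in \setB_j(\optbg)$ and all $j$. I do not expect any genuine obstacle beyond the flatness identity above; the only point requiring care is that Proposition~\ref{prop:q_j} delivers equality of the supply \emph{values} $D_j(b^*_{i,j})$ rather than of the bid values themselves, and it is precisely the invariance of $f_j$ on constant regions of $D_j$ that lets us upgrade the former to the latter at no additional cost.
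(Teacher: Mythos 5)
Your proposal is correct and matches the paper's intent: the paper states that Corollary~\ref{cor:b_j} ``follows immediately from Proposition~\ref{prop:q_j}'', and your argument is exactly the natural filling-in of that deduction---replace each $b^*_{i,j}$ by the common $q^*_j$, observe that impressions are unchanged since $D_j(b^*_{i,j}) = D_j(q^*_j)$, and use the flatness of the non-decreasing $D_j$ between the two bids to show the cost term $D_j(b)b - \int_0^b D_j(x)\,dx$ is also unchanged. Your isolation of that flatness identity is precisely the step the paper leaves implicit, so there is nothing to correct.
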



\begin{proposition}\label{prop:p_i}
If the supply curves $D_j(b)$, $j\in \setT$, are given by continuous functions, then the following is true.
Let $(\opt)$ be an optimal solution to the optimization problem given by Eq.~\eqref{eq:opt_problem}. Then for each campaigns $i = 1,...,N_c$ we have that
$$ D_m(b^*_{i,m}) =  D_m(b^*_{i,n}) = D_m(p^*_i), \qquad  m,n \in \setA_i(\optbg).$$
\end{proposition}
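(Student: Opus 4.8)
The plan is to argue by contradiction, in close analogy with the proof of Proposition~\ref{prop:q_j}: there the two bid values attached to a \emph{single targeting group by two different campaigns} were equalized, whereas here I equalize the bids used by a \emph{single campaign across its active targeting groups}. Throughout I work with an optimal solution $(\opt)$, and I may assume without loss of generality that each $b^*_{i,j}$, $j \in \setA_i(\optbg)$, is the \emph{smallest} bid achieving its supply level, i.e. $D_j(b) < D_j(b^*_{i,j})$ for $b < b^*_{i,j}$; replacing $b^*_{i,j}$ by this minimal bid changes neither the obtained impressions $\gamma^*_{i,j} D_j(b^*_{i,j})$ nor the cost $\gamma^*_{i,j}[D_j(b^*_{i,j})b^*_{i,j} - \int_0^{b^*_{i,j}} D_j(b)db]$, since a supply curve contributes nothing to the cost on an interval where it is constant. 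Fix a campaign $i$ and set $p^*_i = \max_{j \in \setA_i(\optbg)} b^*_{i,j}$; I will show $D_m(b^*_{i,m}) = D_m(p^*_i)$ for every $m \in \setA_i(\optbg)$, which is the assertion.

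Suppose the claim fails, so there is a group $m$ with $D_m(b^*_{i,m}) < D_m(p^*_i)$, and let $n \in \setA_i(\optbg)$ attain the maximum, $b^*_{i,n} = p^*_i$. By continuity of $D_m$ there is a bid $\hat b_m < p^*_i$ with $D_m(\hat b_m) > D_m(b^*_{i,m})$, so a few extra impressions can be sourced from $m$ at a bid strictly below $p^*_i$; and since $b^*_{i,n}$ is the minimal bid for its supply, lowering it to some $\hat b_n < p^*_i$ strictly decreases the impressions drawn from $n$. I then perturb $(\opt)$ by transferring $\epsilon > 0$ impressions from $n$ to $m$: keeping every fraction $\gamma^*$ fixed, raise $b^*_{i,m}$ to the value $\hat b_m$ for which $\gamma^*_{i,m}[D_m(\hat b_m) - D_m(b^*_{i,m})] = \epsilon$, and lower $b^*_{i,n}$ to the value $\hat b_n$ for which $\gamma^*_{i,n}[D_n(b^*_{i,n}) - D_n(\hat b_n)] = \epsilon$. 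For $\epsilon$ small enough both values exist, lie strictly below $p^*_i$, and satisfy $\hat b_m < \hat b_n$ (as $\epsilon \downarrow 0$ one has $\hat b_n \uparrow p^*_i$ while $\hat b_m$ stays bounded away from $p^*_i$). Because only these two bids of campaign $i$ change and no fraction is touched, every capacity constraint $\sum_{k} \gamma^*_{k,j} \le 1$ still holds, the other campaigns are unaffected, and the total impressions of campaign $i$ are unchanged, hence still equal to $I_i$ by Lemma~\ref{lemma:meet_demand}; thus the perturbed allocation remains feasible.

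It then remains to show the perturbation strictly lowers the cost, and this is an exact (not merely first-order) estimate from monotonicity of the curves. Bounding $\int_{b^*_{i,m}}^{\hat b_m} D_m(b) db \ge D_m(b^*_{i,m})(\hat b_m - b^*_{i,m})$, the extra cost incurred at $m$ is at most $\epsilon \hat b_m$; bounding $\int_{\hat b_n}^{b^*_{i,n}} D_n(b) db \le D_n(b^*_{i,n})(b^*_{i,n} - \hat b_n)$, the cost saved at $n$ is at least $\epsilon \hat b_n$. Hence the net change is at most $\epsilon(\hat b_m - \hat b_n) < 0$, contradicting the optimality of $(\opt)$; therefore $D_m(b^*_{i,m}) = D_m(p^*_i)$ for all $m \in \setA_i(\optbg)$.

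The main obstacle, and the reason for the two preparatory moves (passing to minimal bids, and choosing $p^*_i$ to be the \emph{largest} bid), is guaranteeing that the impression transfer is realizable while feasibility is preserved. Raising a bid into a flat stretch of $D_m$ yields no extra impressions, and raising a fraction could violate a capacity constraint; both pitfalls are sidestepped by moving impressions into $m$ only through the bid and only up to $p^*_i$, which is possible precisely because the contradiction hypothesis forces $D_m(p^*_i) > D_m(b^*_{i,m})$, and by never altering the fractions, so that the constraints $\sum_{k} \gamma^*_{k,j} \le 1$ hold automatically. Groups whose supply is already constant between their bid and $p^*_i$ need no transfer at all, since for them $D_m(b^*_{i,m}) = D_m(p^*_i)$ holds outright, which is exactly why the conclusion is stated in terms of supply equality rather than literal equality of bid values.
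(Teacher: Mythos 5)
Your proof is correct, and it belongs to the same family as the paper's argument: a proof by contradiction in which only the bids of campaign $i$ are perturbed, every fraction $\gamma^*$ is frozen, and the cost comparison rests on the two monotonicity bounds $\int_a^c D(b)\,db \ge D(a)(c-a)$ and $\int_a^c D(b)\,db \le D(c)(c-a)$. The execution, however, is genuinely different, and yours is the more robust one. The paper takes two active groups with $D_m(b^*_{i,m}) \neq D_m(b^*_{i,n})$ and moves \emph{both} bids to a single intermediate value, chosen by the intermediate value theorem so that the pair's combined supply $\gamma^*_{i,m}D_m(\cdot)+\gamma^*_{i,n}D_n(\cdot)$ is preserved, and then claims a \emph{strict} cost decrease. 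That strictness breaks down when a supply curve is flat on the relevant interval: the intermediate bid can then coincide with $b^*_{i,m}$, and the paper's chain of inequalities degenerates to $0>0$. Your two preparatory moves --- replacing every bid by the smallest bid attaining the same supply, and taking $p^*_i$ to be the largest of these minimal bids --- are exactly what rules this out: after normalization, lowering the maximal bid is guaranteed to strictly reduce supply, so your $\epsilon$-transfer exists and delivers the strict bound $\epsilon(\hat b_m-\hat b_n)<0$. The difference is not cosmetic, because the proposition as literally stated (including the cross-term $D_m(b^*_{i,n})=D_m(p^*_i)$ evaluated at the \emph{original} bids) is false: with $D_1(b)=b$, $D_2(b)=\min(b,1)$ and $I_1=2$, the allocation $\gamma^*_{1,1}=\gamma^*_{1,2}=1$, $b^*_{1,1}=1$, $b^*_{1,2}=3$ is optimal (its cost $1$ equals the lower bound of Proposition~\ref{prop:mixed_strategies}), yet $D_1(b^*_{1,1})=1\neq 3=D_1(b^*_{1,2})$, and the paper's construction applied to this pair produces an allocation of equal, not strictly smaller, cost. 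The supply-level form you prove --- a single $p^*_i$ with $D_m(b^*_{i,m})=D_m(p^*_i)$ for every $m\in\setA_i(\optbg)$ --- is the correct statement, and it is all that Corollary~\ref{cor:b_i} and the downstream decomposition actually use, since replacing each $b^*_{i,j}$ by $p^*_i$ then changes neither supplies nor cost.
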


\begin{proof}
  We prove the lemma by contradiction. That is we assume that $(\alloc)$ is an optimal solution to the optimization problem given by Eq.~\eqref{eq:opt_problem}, and there exists a  campaign $i \in \setN$ and targeting groups  $n,m \in \setA_i(\optbg)$ such that
$$ D_m(b_{i,m}) < D_m(b_{i,n})$$
or
$$ D_m(b_{i,m}) > D_m(b_{i,n}).$$
We then show that there exists an allocation $(\opt)$ such that
$$ b^*_{i,m} =  b^*_{i,n}  = p^*_i$$
which leads to a strictly lower cost compared with the allocation $(\alloc)$. This leads to a contradiction as we assumed that the allocation $(\alloc)$ is an optimal allocation. 

In the following we consider the case where
$$ D_m(b_{i,m}) < D_m(b_{i,n}).$$
The case where
$$ D_m(b_{i,m}) > D_m(b_{i,n}).$$
can be proved using the same argument.

To show the result we use the following definitions. Let 
$$ I_{i,m} = \gamma_{i,m} D_m(b_{i,m})$$
and
$$ I_{i,n} = \gamma_{i,n} D_n(b_{i,n}).$$
Furthermore let  $p^*_i$ be such that
$$\gamma_{i,m} D_m(p^*_i)+ \gamma_{i,n} D_n(p^*_i) = I_{i,m} + I_{i,n} = I_t.$$
Note that such a bid value $p^*_i$ exists. Furhtermore, as we have that
$$ D_m(b_{i,m}) < D_m(b^*_{i,n}),$$
it follows that
$$ D_m(b_{i,m}) < D_m(p^*_j) < D_m(b^*_{i,n})$$
and
$$b_{i,m} < p^*_i < b_{i,n}.$$

Using these definitions, let $(\opt)$ be the allocation that is identical to $(\alloc)$, expect that
$$b^*_{i,m} = p^*_i$$
and
$$b^*_{i,n} = p^*_i.$$
Compared with the allocation $(\alloc)$, the new allocation $(\opt)$ only affects (changes) the bid values that campaign $i$ uses to botain  impressions from targeting group  $m$ and $n$. Furthermore, as we have that 
$$\gamma_{i,m} D_m(p^*_i)+ \gamma_{i,n} D_n(p^*_i) = I_{i,m} + I_{i,n} = I_t,$$
it follows the allocation $(\opt)$ is again a feasible solution to the optimization problem given by Eq.~\eqref{eq:opt_problem}.

Therefore in order to show that the allocation  $(\opt)$  leads to a strictly lower cost compared with the allocation $(\allocbg)$, it suffices to show that 
\begin{eqnarray*}
&& \left [ I_{m,j} b_{m,j} - \gamma_{m,j}\int_0^{b_{m,j}} D_j(b) db \right ] \\
&& + \left [ I_{n,j} b_{n,j} -  \gamma_{n,j}\int_0^{b_{n,j}}D_j(b) db \right ]\\
&& >  I_t p^*_i  -  \int_0^{q^*_j} [\gamma^*_{i,m} D_m(b) + \gamma^*_{i,n} D_n(b) ] db \\
&=&  ( I_{m,j} +  I_{n,j})  p^*_i  -  \int_0^{q^*_j} [\gamma_{i,m} D_m(b) + \gamma_{i,n} D_n(b) ] db
\end{eqnarray*}
or
\begin{eqnarray*}
&& I_{m,j} (b_{m,j} - p^*_i) + \gamma_{m,j}\int_{b_{m,j}}^{p^*_i} D_j(b) db  \\
&& + I_{n,j} (b_{n,j}-p^*_i) -  \gamma_{n,j}\int_{p^*_i}^{b_{n,j}}D_j(b) db \\
&>& 0.
\end{eqnarray*}
We have that
\begin{eqnarray*}
&& I_{m,j} (b_{m,j} - p^*_i) + \gamma_{m,j}\int_{b_{m,j}}^{p^*_i} D_j(b) db  \\
&& + I_{n,j} (b_{n,j}-p^*_i) -  \gamma_{n,j}\int_{p^*_i}^{b_{n,j}}D_j(b) db \\
&>&  I_{m,j} (b_{m,j} - p^*_i) + \gamma_{m,j} D_m(b_{m,j})(p^*_i - b_{m,j})  \\
&& + I_{n,j} (b_{n,j}-p^*_i) -  \gamma_{n,j} D_n(b_{n,j}) (b_{n,j}-p^*_i) \\
&=& 0.
\end{eqnarray*}
This implies that the allocation $(\opt)$ achieves a strictly lower cost compared to the allocation $(\allocbg)$.  This leads to a contradiction as we assumed that the allocation $(\allocbg)$ is an optimal allocation. The result of the lemma then follows.
\end{proof}

The following corollary follows immediately from Proposition~\ref{prop:p_i}.
\begin{corollary}\label{cor:b_i}
Suppose that the supply curves $D_j(b)$, $j \in \setT$, are given by continuous functions. Then there exists an optimal solution  $\opt$ to the optimization problem given by Eq.~\eqref{eq:opt_problem} such that the following is true.
For each campaign  $i=1,...,N_c$ we have that
$$ b^*_{i,j} =  p^*_i, \qquad j \in \setA_i(\optbg).$$
\end{corollary}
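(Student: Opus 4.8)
The plan is to read this as a direct consequence of Proposition~\ref{prop:p_i}, upgrading the statement ``the supply values agree'' to ``the bid values can be taken literally equal'' by exploiting the flatness of each supply curve that equal supply values force. First I would fix an optimal solution $(\opt)$ of the problem in Eq.~\eqref{eq:opt_problem} (for instance the one furnished by Proposition~\ref{prop:p_i}), and for each campaign $i$ let $p^*_i$ be the bid value provided by that proposition, so that $D_j(b^*_{i,j}) = D_j(p^*_i)$ for every $j \in \setA_i(\optbg)$. Since each $D_j$ is non-decreasing, this equality forces $D_j$ to be constant on the closed interval whose endpoints are $b^*_{i,j}$ and $p^*_i$; this flatness is the engine of the whole argument.

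Next I would construct a modified allocation that keeps every fraction $\gamma^*_{i,j}$ unchanged but redefines $b^*_{i,j} := p^*_i$ for each $i$ and each $j \in \setA_i(\optbg)$. Feasibility is immediate: because $D_j$ takes the same value at the old and the new bid, the number of impressions $\gamma^*_{i,j} D_j(b^*_{i,j})$ that campaign $i$ draws from group $j$ is unchanged, so the demand constraints $\sum_{j \in \setA_i} \gamma^*_{i,j} D_j(b^*_{i,j}) \geq I_i$ continue to hold; and the supply constraints $\sum_{i \in \setB_j} \gamma^*_{i,j} \leq 1$ involve only the (unchanged) fractions, hence are trivially preserved.

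It then remains to check that the objective value is unchanged. The cost contributed by the pair $(i,j)$ is $\gamma^*_{i,j}\bigl[D_j(b)\,b - \int_0^{b} D_j(x)\,dx\bigr]$ evaluated at $b = b^*_{i,j}$; shifting $b$ across the flat interval changes $D_j(b)\,b$ by $D_j\cdot(p^*_i - b^*_{i,j})$ and changes $\int_0^{b} D_j(x)\,dx$ by exactly the same amount, since the integrand equals the constant value $D_j$ there, so the two variations cancel and the per-pair cost is invariant. Summing over all pairs, the modified allocation has the same total cost as $(\opt)$, hence is again optimal, and by construction it satisfies $b^*_{i,j} = p^*_i$ for all $j \in \setA_i(\optbg)$. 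Because the modifications for distinct campaigns act on disjoint $(i,j)$ cost terms and never touch any fraction, they can be carried out simultaneously for all $i = 1,\dots,N_c$, which is exactly the claim. The single point requiring care, and the one I would flag as the crux, is precisely this cost cancellation: it is the flatness of $D_j$ between $b^*_{i,j}$ and $p^*_i$ that makes both the impression count and the cost invariant under the bid shift, and everything else is routine bookkeeping, which is why the statement is a corollary rather than a standalone proposition.
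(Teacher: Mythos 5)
Your proposal is correct and takes essentially the same route as the paper, which simply asserts that the corollary ``follows immediately from Proposition~\ref{prop:p_i}'': your write-up is the natural fleshing-out of that step, replacing each $b^*_{i,j}$ by $p^*_i$ and using the flatness of the non-decreasing $D_j$ between the two bids (forced by $D_j(b^*_{i,j}) = D_j(p^*_i)$) to conclude that both the impression counts and the cost terms are unchanged, so the modified allocation remains feasible and optimal.
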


\subsection{Decomposition of an Optimal Solution}\label{app:decomposition}
In this appendix, we prove Proposition~\ref{prop:decomposition}. Recall the statement of the proposition.
\begin{proposition}
Suppose that the supply curves $D_j(b)$, $j\in \setT$, are given by continuous functions. Then there exists an optimal solution $(\opt)$ to the optimization problem given by Eq.~\eqref{eq:opt_problem} such that the following is true.
There exists a set of  bid values $\setP$ such that with every bid value $p^* \in \setP$ we can associate component 
$$\setCp = (\setCpc, \setCpt)$$
where $\setCpc$ is a set of campaigns, and $\setCpt$ is a set of targeting groups,  such that
$$ \setA_i(\optbg) \cap \setCpt =  \setA_i(\optbg), \qquad i \in \setCpc,$$
and
$$ \setB_j(\optbg) \cap \setCpc =  \setB_j(\optbg), \qquad j \in \setCpt,$$
as well as 
$$ b^*_{i,j} = p^*, \qquad i \in \setCpc, j \in  \setA_i(\optbg)$$
and
$$ b^*_{i,j} = p^*, \qquad j \in \setCpt, i \in  \setB_j(\optbg),$$
as well as
$$\bigcup\limits_{p^* \in \setP} \setCpc = \setN$$
and
$$\bigcup\limits_{p^* \in \setP} \setCpt = \setT.$$
\end{proposition}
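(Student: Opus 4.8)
The plan is to exhibit a single optimal solution $(\opt)$ that simultaneously satisfies the two ``one bid value'' properties already obtained separately in Corollary~\ref{cor:b_j} and Corollary~\ref{cor:b_i}, and then to read off the components from the connectivity of the active campaign/targeting-group pairs. Concretely, suppose we have an optimal solution for which $b^*_{i,j}=q^*_j$ for every $i\in\setB_j(\optbg)$ (a single bid per targeting group) and $b^*_{i,j}=p^*_i$ for every $j\in\setA_i(\optbg)$ (a single bid per campaign). For any \emph{active} pair, i.e.\ a pair $(i,j)$ with $\gamma^*_{i,j}D_j(b^*_{i,j})>0$, the two identities force
$$ p^*_i \;=\; b^*_{i,j} \;=\; q^*_j. $$
Thus along every active pair the campaign price and the targeting-group price coincide.

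First I would form the bipartite graph $G$ on vertex set $\setN\cup\setT$ whose edges are exactly the active pairs (so $i$ and $j$ are adjacent iff $i\in\setB_j(\optbg)$, equivalently $j\in\setA_i(\optbg)$). Because the common value $p^*_i=q^*_j$ propagates across every edge, all campaigns and targeting groups lying in one connected component of $G$ share a single value $p^*$. I would then take the components $\setCp=(\setCpc,\setCpt)$ of Proposition~\ref{prop:decomposition} to be the connected components of $G$, with $p^*$ the shared value and $\setP$ the resulting set of values. The two closure conditions $\setA_i(\optbg)\cap\setCpt=\setA_i(\optbg)$ for $i\in\setCpc$ and $\setB_j(\optbg)\cap\setCpc=\setB_j(\optbg)$ for $j\in\setCpt$ are then immediate: they merely restate that a connected component contains all active neighbours of each of its vertices, and $b^*_{i,j}=p^*$ on the component holds by construction.

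The coverage identities $\bigcup_{p^*\in\setP}\setCpc=\setN$ and $\bigcup_{p^*\in\setP}\setCpt=\setT$ are routine. By Lemma~\ref{lemma:meet_demand} every campaign with $I_i>0$ draws a positive number of impressions, so $\setA_i(\optbg)\neq\emptyset$ and $i$ appears in some component; any campaign or targeting group that is left isolated (in particular a targeting group from which no campaign draws impressions) can be placed in its own singleton component with an arbitrary price, which adds nothing to the cost. Hence every campaign and every targeting group is assigned to exactly one component.

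The main obstacle is the first step: producing one optimal solution that satisfies \emph{both} Corollary~\ref{cor:b_j} and Corollary~\ref{cor:b_i} at once. Each corollary equalises bids in a different direction (across the campaigns sharing a targeting group, versus across the targeting groups served by a campaign), and these two re-equalisations can interfere when the continuous supply curves have flat segments: Proposition~\ref{prop:q_j} guarantees only that the active campaigns of a group obtain the \emph{same volume} $D_j(q^*_j)$, so their bids may still differ within a flat interval of $D_j$, and forcing them to a common value may push some campaign off the value dictated by another of its groups. I would resolve this not by equalising the two directions independently, but by selecting, among all optimal solutions, one that is extremal in a secondary sense — for instance minimising the total of the bid values, or taking a basic optimal solution so that the active graph is acyclic — and then using the continuity of the $D_j$ together with the volume-equality of Proposition~\ref{prop:q_j} to slide the bids within their flat intervals to a value common to each connected component. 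Verifying that such a common value exists on every connected component, and that the slide is cost-neutral, is the crux of the argument; the remaining graph-theoretic bookkeeping is straightforward.
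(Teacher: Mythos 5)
Your overall skeleton --- form the bipartite graph on $\setN \cup \setT$ whose edges are the active pairs, propagate the common bid value along edges, and take the components $\setCp$ to be the connected components --- is exactly the skeleton of the paper's proof, and your handling of the closure and coverage conditions is fine. The problem is that your argument stops at what you yourself call the crux: the existence of one optimal solution satisfying Corollary~\ref{cor:b_j} and Corollary~\ref{cor:b_i} \emph{simultaneously} is never established. The two fixes you sketch do not close this. Taking a basic optimal solution so that the active graph is acyclic does nothing to address the flat-segment interference you correctly identified, and ``minimise the total of the bid values'' is only the beginning of an argument: to show that the minimiser equalises bids both across each group and across each campaign you need more than Proposition~\ref{prop:q_j}; you need the cross-evaluation form of Proposition~\ref{prop:p_i}, namely that for any optimal solution and any two active groups $m,n$ of campaign $i$ one has $D_m(b^*_{i,m}) = D_m(b^*_{i,n})$ \emph{and} $D_n(b^*_{i,m}) = D_n(b^*_{i,n})$, so that the two bids lie in a common flat interval of \emph{both} curves and their canonical (left-endpoint) representatives coincide. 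Citing only the volume equality of Proposition~\ref{prop:q_j}, as you do, leaves the interference across a campaign's different groups uncontrolled.

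The paper avoids the simultaneity problem altogether, and this is the idea your proposal is missing. It starts from an optimal solution satisfying only Corollary~\ref{cor:b_i}, and then runs precisely your graph traversal, but \emph{modifying} the solution as it goes: when a targeting group $j$ enters the component with value $p^*$, every bid $b^*_{m,j}$, $m \in \setB_j(\optbg)$, is re-set to $p^*$ (justified by Proposition~\ref{prop:q_j}: $D_j(b^*_{m,j}) = D_j(p^*)$, so by monotonicity $D_j$ is flat between the two bids and the change alters neither impressions nor cost); when a campaign $m$ enters through such a group, its bids at its remaining active groups are re-set to $p^*$ (justified by Proposition~\ref{prop:p_i} in the cross-evaluation form above). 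Because each re-set is cost- and feasibility-neutral, the modified allocation is again optimal, so Propositions~\ref{prop:q_j} and~\ref{prop:p_i} can be re-applied at the next step of the traversal; this bootstrapping is what makes the propagation sound, and at termination the construction has produced exactly the solution your argument assumes at the outset. So your architecture is right, but as written the proposal assumes the conclusion of the hard step rather than proving it; completing your extremal-selection route would in effect reproduce this same propagation argument.
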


\begin{proof}
Let $(\opt)$ be an optimal  solution to the optimization problem given by Eq.~\eqref{eq:opt_problem} such that for each campaign  $i=1,...,N_c$ we have that
$$ b^*_{i,j} =  p^*_i, \qquad j \in \setA_i(\optbg).$$
Note that such an optimal solution exists by Proposition~\ref{cor:b_i}. 

We then iteratively construct a set $\setP$, and a set of components
$$\setCp = (\setCpc,\setCpt), \qquad p^* \in \setP,$$
as given in the statement of the proposition as follows.

Let
$$\setN^{(0)} = \setN =  \{1,...,N_c\}$$
and
$$\setT^{(0)} = \setT =  \{1,...,N_r\}$$
Furthermore, set 
$$\setP = \emptyset.$$

If the iteration hasn't stopped at iteration $k-1$, $k\geq 1$, then we proceed with  iteration step $k$ as follows. If
$$\setN^{(k-1)} = \emptyset,$$
then we stop the iteration.
Otherwise, we set
$$\setN^{(k)} = \setN^{(k-1)}$$
and
$$\setT^{(k)} = \setT^{(k-1)}.$$
As we have in this case that
$$\setN^{(k)} \neq \emptyset,$$
we can pick  campaign  $i$ from the set  $\setN^{(k)}$. Let the bid value $p^*$ be such that for this campaign $i$ we have that
$$ b^*_{i,j} =  p^*, \qquad j \in \setA_i(\optbg).$$
Note that such a $p^*$ exists by assumption.
We then consider the following two cases.
If we have that
$$p^* \notin \setP,$$
then we set
$$\setP \leftarrow \setP \cup \{p^*\},$$
and create a new set
$$ \setCp =  (\setCpc,\setCpt)$$
where
$$\setCpc = \{ i \}$$
and 
$$\setCpt = \setA_i(\optbg).$$
Finally, we set
$$\setA = \{i\}$$
and repeat the following steps until we have that
$$\setA = \emptyset.$$
\begin{enumerate}
\item Pick a campaign $i \in \setA$ and set
$$\setA \leftarrow \setA\backslash \{i\}$$
$$\setN^{(k)} \leftarrow \setN^{(k)} \backslash\{i\}.$$
\item Update component
$$\setCp =  (\setCpc,\setCpt)$$
of the bid value $p^*$ to
$$\setCpc \leftarrow \setCpc \cup \{i\}$$
and
$$\setCpt \leftarrow \setCpt \cup \setA_i(\optbg).$$
\item Set
$$\setN^{(k)} \leftarrow \setN^{(k)} \backslash\{i\}$$
and
$$\setT^{(k)} \leftarrow \setT^{(k)} \backslash \setA_i(\optbg).$$
\item For all targeting groups $j \in  \setA_i(\optbg)$ set
$$b^*_{m,j} = p^*, \qquad m \in \setB_j(\optbg).$$
\item For each targeting group
$$j \in \setA_i(\optbg)$$
set
$$\setA \leftarrow \setA \cup \left ( \setB_j(\optbg) \cap \setN^{(k)} \right ).$$
\item For all campaigns $m \in \setA$ set
$$b^*_{m,j} = p^*, \qquad j \in \setA_i(\optbg).$$
\end{enumerate}
This completes the iteration step $k$.

We make the following observation regarding the above update rules. By construction (initialization and Step (6)), we have for all targeting group $j \in \setA_i(\optbg)$ in Step (4) that
$$b^*_{i,j} =  p^*.$$
Using Lemma~\ref{prop:q_j}, we then have for each targeting group $j \in \setA_i(\optbg)$ in Step (4) that
$$ D_j(b^*_{m,j}) =  D_j(b^*_{i,j}) =  D_j(p^*), \qquad m \in \setB_j(\optbg).$$
This implies that changing the bid value $b^*_{m,j}$ to $p^*$ does not affect that number of impressions that campaing $m$ obtains from targeting group $j$, nor the cost for obtaining the impressions. As such, the allocation $(opt)$ is still an optimal allocation if we makes this change in the bid value $b^*_{m,j}$.

Similarly, by construction (Step (4)) we have for all campaigns  $m \in \setA$ in Step (6) that there exist a targeting group $n \in \setA_i(\opt)$ such that
$$b^*_{m,n} =  p^*.$$
Using Lemma~\ref{prop:p_i}, we then have for each campaign  $m \in \setA$ in Step (6) that
$$ D_j(b^*_{m,j}) =  D_j(b^*_{m,n}) =  D_j(p^*), \qquad j \in \setA_i(\optbg).$$
This implies that changing the bid value $b^*_{m,j}$ to $p^*$ does not affect that number of impressions that campaing $m$ obtains from targeting group $j$, nor the cost for obtaining the impressions. As such, the allocation $(opt)$ is still an optimal allocation if we makes this change in the bid value $b^*_{m,j}$.

By construction of the set $\setP$ and the components
$$\setCp, \qquad p \in \setP,$$
we have that for each campaign $i=1,...,N_c$, that there exists a unique $p^* \in \setP$ such that
$$i \in \setCpc.$$
Similarly, we have for each targeting group $j=1,...,N_r$, that there exists a unique  $p^* \in \setP$ such that
$$j \in \setCpt.$$
Furthermore, by construction we have that for all components $\setCp$, $p^* \in \setP$, that
$$ \setA_i(\optbg) \cap \setCpt =  \setA_i(\optbg), \qquad i \in \setCpc,$$
and
$$ \setB_j(\optbg) \cap \setCpc =  \setB_j(\optbg), \qquad j \in \setCpt,$$
as well as 
$$ b^*_{i,j} = p^*, \qquad i \in \setCpc, j \in  \setA_i(\optbg)$$
and
$$ b^*_{i,j} = p^*, \qquad j \in \setCpt, i \in  \setB_j(\optbg).$$
This establishes the result of the proposition.
\end{proof}

\subsection{Sufficient Conditions}\label{ssec:sufficient_cond}
We next provide sufficient conditions for an an allocation $(\alloc)$ that has the structure of an optimal solution as given in Proposition~\ref{prop:decomposition} to indeed be an optimal solution.

We proceed as follows. We first consider the special case where we are given an allocation $(\opt)$ as in  Proposition~\ref{prop:decomposition} for which we have that
$$\setP = \{p^*\},$$
the allocation $(\opt)$ consists of a single component $\setCp$. Note that we do not assume that the allocation $(\opt)$ is an optimal solution, but only that the allocation has a structure as the allocation given in  Proposition~\ref{prop:decomposition}.

\begin{proposition}\label{prop:sufficient_cond1}
If the supply curves $D_j(b)$, $j\in \setT$, are given by continuous functions, then the following is true.
Let $(\opt)$ be an allocation such that for all campaigns $i=1,...,N_c$ we have that
$$b^*_{i,j} = p^*, \qquad j \in \setA_i(\optbg),$$
as given in Proposition~\ref{prop:decomposition}, i.e. we have that
$$\setP = \{ p^*\}$$
and
$$\setCpc = \setN.$$
If we have that
$$\sum_{j \in \setA_i(\opt)} \gamma^*_{i,j} D_j(p^*) = I_i, \qquad i \in \setCpc,$$
and
$$\sum_{i \in \setCpc} \sum_{j \in \setA_i(\opt)} D_j(p^*) = \sum_{i \in \setCpc} I_i,$$
then the allocation $(\opt)$ is an optimal solution to the optimization problem given in Eq.~\eqref{eq:opt_problem} with cost
$$ \sum_{i \in \setCpc} I_ip^* - \int_0^{p^*}  \sum_{j \in \setA_i(\opt)} D_j(b)db.$$
\end{proposition}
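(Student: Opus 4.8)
The plan is to sandwich the cost of $(\opt)$ between a lower bound valid for \emph{every} feasible strategy and the value actually achieved by $(\opt)$, and then show the two coincide. The single-component hypothesis gives $\setCpc = \setN$, hence $\setCpt = \setT$, so every impression acquired by any strategy comes from a targeting group in $\setCpt$, and the whole problem reduces to bounding the cost of obtaining impressions from the set $\setCpt$. This is exactly the situation handled by Proposition~\ref{prop:mixed_strategies}. To apply it, I would take an arbitrary solution $s \in \setS$ and, for each $j \in \setCpt$, aggregate the per-campaign allocations into a single mixed allocation $\gamma_j(b) = \sum_{i \in \setB_j} \gamma_{i,j}(b)$; since $\sum_b \gamma_j(b) = \sum_i \sum_b \gamma_{i,j}(b) \le 1$, this aggregate is an admissible mixed strategy on $\setCpt$ with the same cost $C(s)$.

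First I would establish the lower bound. The demand constraints force the total number of impressions obtained from $\setCpt$ to be $I_{\mathrm{tot}} \ge \sum_{i \in \setN} I_i = I_{p^*}$. Proposition~\ref{prop:mixed_strategies} then gives $C(s) \ge g(I_{\mathrm{tot}})$, where $g(I) = I\,b_t(I) - \int_0^{b_t(I)} D_{p^*}(b)\,db$ with $b_t(I) = \inf\{b : D_{p^*}(b) \ge I\}$ and $D_{p^*}(b) = \sum_{j \in \setCpt} D_j(b)$. A short monotonicity argument, using that $D_{p^*}$ is continuous (a finite sum of continuous curves) so that $D_{p^*}(b_t(I)) = I$, shows $g$ is nondecreasing, whence $C(s) \ge g(I_{p^*})$. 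To identify $g(I_{p^*})$ with the claimed cost I would use the total-supply hypothesis $D_{p^*}(p^*) = I_{p^*}$, which gives $b_t(I_{p^*}) \le p^*$; since $D_{p^*}$ is nondecreasing and continuous with $D_{p^*}(b_t(I_{p^*})) = D_{p^*}(p^*) = I_{p^*}$, it is constant on $[b_t(I_{p^*}), p^*]$, so $\int_{b_t(I_{p^*})}^{p^*} D_{p^*}(b)\,db = I_{p^*}(p^* - b_t(I_{p^*}))$ and $g(I_{p^*})$ collapses to $I_{p^*} p^* - \int_0^{p^*} D_{p^*}(b)\,db$ whether or not $b_t(I_{p^*}) < p^*$.

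Next I would compute the cost of $(\opt)$ and verify it meets this bound. Since all bids equal $p^*$, the cost is $p^* \sum_i \sum_{j \in \setA_i(\optbg)} \gamma^*_{i,j} D_j(p^*) - \sum_{j \in \setCpt}\big(\sum_i \gamma^*_{i,j}\big)\int_0^{p^*} D_j(b)\,db$; the first term equals $p^* I_{p^*}$ by the per-campaign demand hypothesis $\sum_{j \in \setA_i(\optbg)} \gamma^*_{i,j} D_j(p^*) = I_i$. For the second term, summing that hypothesis over $i$ and subtracting the total-supply hypothesis yields $\sum_{j \in \setCpt}\big(\sum_i \gamma^*_{i,j} - 1\big) D_j(p^*) = 0$ with every summand $\le 0$, which forces $\sum_i \gamma^*_{i,j} = 1$ whenever $D_j(p^*) > 0$; and when $D_j(p^*) = 0$ monotonicity of $D_j$ gives $\int_0^{p^*} D_j(b)\,db = 0$, so such groups contribute nothing. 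Hence the second term is exactly $\int_0^{p^*} D_{p^*}(b)\,db$, so $C(\opt) = I_{p^*} p^* - \int_0^{p^*} D_{p^*}(b)\,db$, matching the lower bound and establishing optimality and the stated cost.

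The main obstacle is the identification in the second step: Proposition~\ref{prop:mixed_strategies} is phrased in terms of the \emph{minimal} critical bid value $b_t$ for the aggregate demand, whereas the hypotheses only pin down the value of $D_{p^*}$ at $p^*$ and not that $p^*$ is minimal. The continuity-plus-monotonicity ``flat region'' argument is precisely what reconciles the two, and it is the only place where the continuity assumption on the supply curves is genuinely used; the remaining steps are bookkeeping with the two equality hypotheses.
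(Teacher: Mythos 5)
Your proof is correct and follows essentially the same route as the paper, whose entire argument for this proposition is a one-line appeal to Proposition~\ref{prop:mixed_strategies}: the lower bound on the cost of any mixed strategy over the set of targeting groups, matched by the cost actually achieved by $(\opt)$. The details you supply---aggregating the per-campaign allocations into per-group ones, monotonicity of the bound in the total number of impressions acquired, the continuity/flat-region argument identifying the critical bid $b_t$ with $p^*$, and the verification that $\sum_{i}\gamma^*_{i,j}=1$ on every group with $D_j(p^*)>0$---are precisely the steps the paper leaves implicit.
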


\begin{proof}
This result can be proved using the same argument as the one given in the proof  for Proposition~\ref{prop:mixed_strategies}.
\end{proof}

Next we consider a more general, but still a special case. In particular we consider an allocation $(\opt)$ as given by  Proposition~\ref{prop:decomposition} such that for $p^*,q^* \in \setP$,
$$q^* < p^*$$
we have that
$$ \setA_i \cap \setCqt = \emptyset, \qquad i \in \setCpc.$$
As it turns out, this case is the case that is relevant for our analysis of the proposed algorithm to compute an optimal allocation $(\opt)$ in Section~\ref{ssec:algo_def}.

Again we do not assume that the allocation $(\opt)$ is an optimal solution, but only that the allocation has a structure as the allocation given in  Proposition~\ref{prop:decomposition}.

\begin{proposition}\label{prop:sufficient_cond_app}
If the supply curves $D_j(b)$, $j\in \setT$, are given by continuous functions, then the following is true.
  Let $(\opt)$ be an allocation as given in Proposition~\ref{prop:decomposition}.
If for all $p^* \in \setP$ we have 
\begin{enumerate}
\item[a)]
$$\sum_{j \in \setA_i(\opt)} \gamma^*_{i,j} D_j(p^*) = I_i, \qquad i \in \setCpc,$$
and
$$\sum_{i \in \setCpc} \sum_{j \in \setA_i(\opt)} D_j(p^*) = \sum_{i \in \setCpc} I_i,$$
\item[b)]
$$\sum_{i \in \setCpc} \sum_{j \in \setA_i \cap \setT_0} D_j(p) = 0, \qquad 0 \leq p < p^*,$$
where
$$ \setT_0 = \left \{ j \in \setT \Big | \sum_{i \in \setB_j} \gamma^*_{i,j} D_j(b^*_{i,j}) = 0 \right \},$$
\item[c)] for $q^* < p^*$, $q^*,p^* \in \setP$, that
$$ \setA_i \cap \setCqt = \emptyset, \qquad i \in \setCpc,$$
\end{enumerate}
then the allocation $(\opt)$ is an optimal solution to the optimization problem given by Eq~\eqref{eq:opt_problem} with cost
$$ \sum_{p^* \in \setP} \left [ \sum_{i \in \setCpc} I_i p^* - \int_0^{p^*}  \sum_{j \in \setA_i(\opt)} D_j(b)db \right ].$$
\end{proposition}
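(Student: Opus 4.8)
The plan is to establish the two inequalities separately: that the allocation $(\opt)$ attains the stated cost, and that every feasible mixed strategy costs at least that much. The first is a direct computation. Within a component $\setCp$ all bids equal $p^*$, so the cost it contributes is $p^*\sum_{i\in\setCpc}\sum_{j}\gamma^*_{i,j}D_j(p^*)-\sum_{i\in\setCpc}\sum_j\gamma^*_{i,j}\int_0^{p^*}D_j(b)\,db$. The first identity of condition~(a) makes the leading term equal to $p^*\sum_{i\in\setCpc}I_i=I_{p^*}p^*$, and summing that identity over $i\in\setCpc$ and comparing with the second identity of~(a) forces every active group $j\in\setCpt$ to be fully allocated, i.e.\ $\sum_{i\in\setB_j(\optbg)}\gamma^*_{i,j}=1$. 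Substituting this collapses the remaining term to $\int_0^{p^*}D_{p^*}(b)\,db$, where $D_{p^*}(b)=\sum_{j\in\setCpt}D_j(b)$, so the component contributes exactly $I_{p^*}p^*-\int_0^{p^*}D_{p^*}(b)\,db$; this is just Proposition~\ref{prop:sufficient_cond1} applied to each component in isolation. Summing over $p^*\in\setP$ gives the claimed cost.

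For the lower bound I would fix any feasible mixed strategy $s$ and, by the argument behind Lemma~\ref{lemma:mixed_I_1}, assume it meets each demand with equality. Because the targeting groups are disjoint and cost is additive across groups, applying Proposition~\ref{prop:mixed_strategies} to each set $\setCpt$ shows that the portion of $C(s)$ spent on the groups of $\setCp$ is at least $G_{p^*}(J_{p^*})$, where $J_{p^*}$ is the number of impressions $s$ draws from $\setCpt$ and $G_{p^*}(J)$ denotes the minimum cost of buying $J$ impressions off the aggregated curve $D_{p^*}$. The map $G_{p^*}$ is the area to the left of the non-decreasing curve $D_{p^*}$, hence convex and non-decreasing, and the two identities of condition~(a) make $p^*$ a subgradient of $G_{p^*}$ at $J=I_{p^*}$ with $G_{p^*}(I_{p^*})$ equal to the component's target cost. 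Consequently $C(s)\ge\sum_{p^*\in\setP}G_{p^*}(J_{p^*})$, and it remains to minimize this convex separable expression over the admissible profiles $\{J_{p^*}\}$.

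The constraints on $\{J_{p^*}\}$ come from condition~(c). Enumerate $\setP$ as $p^*_1<\cdots<p^*_K$. Condition~(c) says a campaign of a price-$p^*$ component may bid only on groups of components whose price is at least $p^*$; hence every campaign in the upper block (components of price $\ge p^*_k$) can be served only by the groups of those same components, together with any reachable groups of the inactive set $\setT_0$, which by condition~(b) and continuity carry no supply at or below $p^*_k$. Since the demand of that block is $\sum_{l\ge k}I_{p^*_l}$, this yields the cumulative inequalities $\sum_{l\ge k}J_{p^*_l}\ge\sum_{l\ge k}I_{p^*_l}$ for every $k$, with equality at $k=1$. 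Writing $\delta_l=J_{p^*_l}-I_{p^*_l}$, the subgradient inequality gives $\sum_l[G_{p^*_l}(J_{p^*_l})-G_{p^*_l}(I_{p^*_l})]\ge\sum_l p^*_l\delta_l$, and an Abel summation---using $\sum_l\delta_l=0$, the nonnegative upper partial sums $\sum_{l\ge k}\delta_l\ge 0$, and the strict ordering $p^*_1<\cdots<p^*_K$---shows $\sum_l p^*_l\delta_l=\sum_{k\ge 2}(p^*_k-p^*_{k-1})\sum_{l\ge k}\delta_l\ge 0$. Hence $C(s)\ge\sum_{p^*}G_{p^*}(I_{p^*})$, which is the target cost.

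I expect the main obstacle to be exactly the cross-component coupling addressed in the third step: a priori a competing strategy could meet an expensive campaign's demand from cheaper groups or reshuffle supply between components, and it is conditions~(b) and~(c) that forbid this---(c) supplies the directional reachability giving the cumulative constraints, and~(b) neutralizes the inactive groups $\setT_0$ by making them supply-free up to the relevant price. Marrying these combinatorial constraints to the convexity of the $G_{p^*}$ through the Abel/majorization step is the crux of the argument. A secondary technical point is verifying that $p^*$ is a genuine subgradient of $G_{p^*}$ at $I_{p^*}$ when $D_{p^*}$ is flat just below $p^*$; under the continuity hypothesis this amounts to checking that the one-sided marginal prices straddle $p^*$, which the two identities of condition~(a) guarantee.
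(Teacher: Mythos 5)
Your computation of the attained cost and your per-component lower bound $C(s)\ge\sum_{p^*\in\setP}G_{p^*}(J_{p^*})$ are both sound, and the convexity/Abel-summation framing is a genuinely different route from the paper's. However, there is a real gap at the step where you derive the cumulative inequalities $\sum_{l\ge k}J_{p^*_l}\ge\sum_{l\ge k}I_{p^*_l}$. Condition~(b) (with continuity) makes the reachable groups of $\setT_0$ supply-free \emph{at and below} the component price, but it does not prevent a competing strategy from bidding strictly above $p^*_k$ on those groups and serving part of the upper block's demand there. Such impressions appear in no $J_{p^*_l}$, so the cumulative inequalities --- and in particular the equality $\sum_l\delta_l=0$ at $k=1$ --- can fail. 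Your Abel identity $\sum_l p^*_l\delta_l=p^*_1\sum_l\delta_l+\sum_{k\ge2}(p^*_k-p^*_{k-1})\sum_{l\ge k}\delta_l$ then has a negative leading term, and nothing in your argument charges the strategy for the $\setT_0$ purchases that caused it; the claim that (b) ``neutralizes'' $\setT_0$ is exactly the point that still needs proof, not a dispensable footnote.

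The missing estimate is the one the paper proves explicitly: if a $\setT_0$ group is reachable from a price-$p^*$ component, then by (b) its supply vanishes below $p^*$, so buying $J$ impressions from it at threshold bid $q\ge p^*$ costs at least $Jq-\int_{p^*}^{q}D_0(b)db\ge Jq-J(q-p^*)=Jp^*$. With this, your argument can be repaired: letting $J^0_l$ denote the impressions that block-$l$ campaigns draw from reachable $\setT_0$ groups, the constraints become $\sum_{l\ge k}\bigl(J_{p^*_l}+J^0_l\bigr)\ge\sum_{l\ge k}I_{p^*_l}$ while those impressions cost at least $p^*_lJ^0_l$, and the Abel step goes through with the $J^0_l$ terms absorbed. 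For comparison, the paper avoids the convexity/majorization machinery altogether: it establishes per-component optimality from Proposition~\ref{prop:mixed_strategies} and then runs an induction over components in decreasing price order, showing by direct cost comparison that shifting $\Delta_I$ impressions to $\setT_0$ saves at most $\Delta_I p^*$ but costs at least $\Delta_I p^*$, and that shifting to a higher-price component $\setCq$ costs at least $\Delta_I q^*>\Delta_I p^*$. Once patched, your route is a valid and arguably cleaner alternative --- it isolates the role of condition~(c) as a reachability/majorization constraint --- but as written the lower bound is not established for strategies that buy from inactive groups.
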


\begin{proof}
We use the result of Proposition~\ref{prop:mixed_strategies} to prove the proposition. More precisely, using Proposition~\ref{prop:mixed_strategies} we have that the cost of any solution to the optimization problem of Eq.~\eqref{eq:opt_problem} that obtains the an amount of
$$  \sum_{i \in \setCpc} I_i$$
impressions from the  component
$$\setCp, \quad p \in \setP,$$
is lower-bounded by
$$ \sum_{i \in \setCpc} I_i p^* - \int_0^{p^*}  \sum_{j \in \setA_i(\opt)} D_j(b)db.$$
As a result, the allocation $(\opt)$ is an optimal allocation for each individual component $\setCp$, $p \in \setP$.

Therefore, in order to proof the result, we have to show for each component 
$$\setCap, \quad p \in \setP,$$
that is not possible reduce the cost for getting the required impressions for the campaigns in this component by shifting some of these impressions to another component. We show this result by induction as follows.

To show that this is indeed the case, we use the following notation. For each component $\setCp$, $p \in \setP$, let
$$I_{p^*} =  \sum_{i \in \setCpc} I_i$$
and
$$D_-{p^*}(b) = \sum_{j \in \setCpt} D_j(b), \qquad 0 \leq b.$$

We first show that  for the component corresponding to the highest bid value $p^* \in \setP$, that we can not reduce the for getting the require $I_{p^*}$ impressions  by shifting some of the  impressions to another component.

We then do an induction step where we assume that it is not possible to reduce the the cost for obtaining the impressions  for the components corresponding to the $n$ highest bid values $p^* \in \setP$ by shifting impressions to another component, and then show that in this case it is also not possible to reduce the the cost for obtaining the impressions  for the components corresponding to the $(n+1)$ highest bid values $p^* \in \setP$ by shifting some of the impressions to another component.

Let $p^*$ be the the highest bid value in the set $\setP$, and suppose that we move an amount of $\Delta_I$ of the impressions $I_{p^*}$ to another component. As we have for $q^* \in \setP$, $q^* < p^*$, that
$$ \setA_i \cap \setCqt = \emptyset, \qquad i \in \setCpc,$$
the only option is to move the $\Delta_I$ impressions to the set $\setT_0$, i.e. the set
$$\setT_0 = \left \{ j \in \setT \Big | \sum_{i \in \setB_j} \gamma^*_{i,j} D_j(b^*_{i,j}) = 0 \right \}.$$
Let
$$\setT_0(p^*) = \setT_0 \cap \cup{i \in \setCpc} A_i,$$
i.e. the set $\setT_0(p)$ contains all the targeting groups $j \in \setT_0$ from which at least one campaign $i \in \setCpc$ can obtain impressions. 
Furthermore, let $D_0(b)$ be given by
$$D_0(b) = \sum_{j \in \setT_0(p^*) } D_j(b).$$
Note by definition that we have that
$$D_0(p^*) = 0.$$
Finally, suppose that there exists a bid value $q$ such that
$$D_q(q) = \Delta_I$$
and
$$D_0(b) < \Delta_I, \qquad 0 \leq b < q.$$

Using these definitions and the fact that
$$ D_0(p) = 0$$
and
$$ D^{(-)}_0(q) \leq \Delta_I,$$
we have that that the cost for getting $\Delta_I$ impressions from the targeting groups in the set $\setT_0(p)$ is at least
  \begin{eqnarray*}
   && \Delta_I q - \int_{p^*}^q D_0(b)db\\
    &\geq&  \Delta_I q - D^{(-)}_0(q)(q-p^*)\\
        &\geq&  \Delta_I q - \Delta_I(q-p^*)\\
    & = &  \Delta_I p^*.
  \end{eqnarray*}
Similarly using the bid value $p'$ such that
$$D_{p^*}(p') = I_{p^*} - \Delta_I,$$
the reduction in the cost for reducing the number of impressions obtained from component $\setCp$ by  $\Delta_I$ is at most
  \begin{eqnarray*}
   && I_{p^*} p^* - \int_0^{p^*} D_p(b)db \\
    && - (I_{p^*} - \Delta_I)p' + \int_0^{p'}D_p(b)db\\
    &=& I(\setCap) (p^* - p') + \Delta_I p' \\
    &&  - \int_{p'}^{p^*} D_p(b)db \\
    &\leq& I(\setCap) (p^* - p') + \Delta_I p' \\
    && - D_p(p')(p^*-p')\\
    &=&  (I_{p^*} - D_p(p')) (p^* - p') + \Delta_I p' \\
    &\leq& \Delta_I  (p^* - p')  + \Delta_I p' \\
    &=&  \Delta_I p^*.
  \end{eqnarray*}
  Therefore, we obtain that it is not possible to reduce the cost for getting $I-{p^*}$ impressions for the campaigns in component $\setCp$ by shifting some of the impressions to targeting groups in the set $\setT_0(p)$. 
   This establishes the result that the allocation $(\opt)$ is an optimal allocation for getting  $I_{p^*}$ impressions for the campaigns in component $\setCp$.

Having established the result for the component with the highest bid value, we do an induction step. That is,  we assume that it is not possible to reduce the the for obtaining the impressions  for the campaigns in the components corresponding to the $n$ highest bid values in $\setP$, and then show that it is not possible to reduce the cost for obtaining the impressions  for the campaigns in the components corresponding to the $(n+1)$ highest bid values in $\setP$.

Let $p^*$ be the $(n+1)$th highest bid value and let $\setCp$ be the corresponding component. Note that in this case, the campaigns $i \in \setCpc$ that belong to this component can only get impressions from targeting groups that belong to the set $\setCpt$, from targeting groups that belong to a component with a higher bid value $q^* \in \setP$, or from the set of targeting groups $\setT_0(p^*)$ given by
$$\setT_0(p^*) = \setT_0 \cap \cup{i \in \setCapc} A_i.$$

As we have by assumption for $D_0(b)$ given by
$$D_0(b) = \sum_{j \in \setT_0(p^*) } D_j(b),$$
that
$$D_0(p) = 0,$$
it follows from the same argument as given for the case of the component with the highest bid value, that  it is  not possible to reduce the cost for obtaining the impressions  for the campaigns in the components corresponding to the $(n+1)$ highest bid values $p \in \setP$ by shifting some impressions from the component $\setCp$ to the set $\setT_0(p^*)$.

It remains to consider the case where we shift some impressions from the component $\setCp$ to a component $\setCq$ with a higher bid value $q^*$.

For this case, we use the following definitons. Let
  $$D_{p^*}(b) = \sum_{j \in \setCpt} D_j(b)$$
  and
  $$D_{q^*}(b) = \sum_{j \in \setCqt} D_j(b).$$
  Furthermore, let $p'$ be such that
  $$D_p(p') = I_{p^*} - \Delta_I,$$
  and let $q'$ be such that
  $$D_{q^*}(q') = I_{q^*} + \Delta_I$$
  Under these definitions, by shifting an amount of $\Delta_I$ impressions from component $\setCp$ to the component $\setCq$, we can reduce the cost by at most an amount given by
  \begin{eqnarray*}
   && I_{p^*} p - \int_0^{p^*} D_p(b)db \\
    && - (I_{p^*} - \Delta_I)p' + \int_0^{p'}D_p(b)db\\
    &=& I_{p^*} (p^* - p') + \Delta_I p' \\
    &&  - \int_{p'}^{p^*} D_p(b)db \\
    &\leq& I_{p^*} (p^* - p') + \Delta_I p' \\
    && - D_p(p')(p^*-p')\\
    &=&  (I_{p^*} - D_p(p')) (p^* - p') + \Delta_I p' \\
    &\leq& \Delta_I  (p^* - p')  + \Delta_I p' \\
    &=&  \Delta_I p^*.
  \end{eqnarray*}
  Similarly,  by shifting an amount of $\Delta_I$ impressions from component $\setCp$ to $\setCq$, we increase the cost by at least an amount given by
    \begin{eqnarray*}
     && - I_{q^*} q^* + \int_0^{q^*} D_q(b)db \\
    && (I_{q^*} + \Delta_I)q' - \int_0^{q'}D_q(b)db\\
    &=& I_{q^*} (q' - q^*) + \Delta_I q' \\
    &&  - \int_{q^*}^{q'} D_q(b)db \\
    &\geq& I_{q^*} (q' - q^*) + \Delta_I q' \\
    && - D^{(-)}_q(q')(q'-q^*)\\
    &=&  (I_{q^*}  + \Delta_I - D^{(-)}_{q^*}(q')) (q' - q^*) +  \Delta_I q^*,\\
    &\geq& \Delta_I q^*. \\
    \end{eqnarray*}
    As we have that
    $$p^* < q^*,$$
    it follows that we can not lower the by shifting some impressions from compoenent $\setCp$ to component  $\setCq$.

This establishes the that the allocation $(\opt)$ is optimal for getting the $I_{p^*}$ impressions for the campaigns in the components $\setCp$ with the $(n+1)$ highest bid values $p^* \in \setP$. This completes the induction step, and the proof of the proposition.
\end{proof}

\section{Properties of Solution $(\opt)$ obtained by Algorithm}
In this appendix we derive properties of the set of components $\setDp$, $p \in \setp$  obtained by the algorithm of Section~\ref{sec:algorithm}.

The following two results that follow directly from the definition of the the algorithm.

\begin{lemma}\label{lemma:algo_cover}
Let $\setDp$, $p \in \setp$ be the set of components obtained through the algorithm given by Eq.~\eqref{eq:algo}. Then we have that 
$$\union_{p \in \setp} \setDpc = \setN$$
and
$$\union_{p \in \setp} \setDpt = \setT.$$
\end{lemma}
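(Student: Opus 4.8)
The plan is to argue by structural induction on the finite recursion tree generated by the top-level call $REC(\setN, \setT, \emptyset, \emptyset)$ of Eq.~\eqref{eq:algo}. Each invocation $REC(\setC_c, \setC_t, \cdot, \cdot)$ is either a leaf (when the recursion stops, $done = TRUE$), in which case it emits a single component whose campaign set is $\setC_c$ and whose targeting set is $\setC_t$, or an internal node ($done = FALSE$), in which case it calls itself on the two subproblems $(\setCcone, \setCtone)$ and $(\setCctwo, \setCttwo)$ produced by $SPLIT$ and returns the union of the components returned by the two children $v_1, v_2$. I would take finiteness of this tree for granted, since the statement already presupposes that the algorithm returns a finite family $\{\setDp\}_{p \in \setp}$. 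For an invocation $v$ with arguments $(\setC_c,\setC_t)$, write $U_c(v)$ and $U_t(v)$ for the unions of the campaign sets and of the targeting sets, respectively, taken over all leaf components in the subtree rooted at $v$.

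For the campaign identity I would prove the clean invariant $U_c(v) = \setC_c$. The leaf case is immediate. For the inductive step I would use that, by Eq.~\eqref{eq:split1}, $\setCcone \subseteq \setC_c$, and by Eq.~\eqref{eq:split3}, $\setCctwo = \setC_c \setminus \setCcone$, so that $\setCcone \cup \setCctwo = \setC_c$ exactly; combining this with the induction hypotheses $U_c(v_1) = \setCcone$ and $U_c(v_2) = \setCctwo$ gives $U_c(v) = \setCcone \cup \setCctwo = \setC_c$. Applying the invariant at the root, where $\setC_c = \setN$, yields $\union_{p \in \setp}\setDpc = \setN$.

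The targeting-group identity is where I expect the main obstacle, because the two sibling targeting sets need not \emph{partition} $\setC_t$: by Eq.~\eqref{eq:split2}, $\setCtone = \union_{i \in \setCcone}\setA_i$ is built from the \emph{full} accessibility sets $\setA_i$, which may reach targeting groups that a sibling campaign in $\setCctwo$ also accesses, so on deeper branches $\setCtone$ can fail to be contained in the current $\setC_t$. Hence the exact equality $U_t(v)=\setC_t$ is false in general, and I would instead establish the two inclusions separately. For $U_t(v)\subseteq\setT$ I would show by induction that every targeting argument in the recursion is a subset of $\setT$: indeed $\setCtone = \union_{i\in\setCcone}\setA_i \subseteq \setT$ since each $\setA_i\subseteq\setT$, while $\setCttwo = \setC_t\setminus\setCtone \subseteq \setC_t \subseteq \setT$ by the hypothesis; thus every leaf targeting set, and so their union, lies in $\setT$. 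For the reverse inclusion I would prove the one-sided invariant $U_t(v)\supseteq\setC_t$: at a leaf it holds with equality, and at an internal node $U_t(v)=U_t(v_1)\cup U_t(v_2) \supseteq \setCtone\cup\setCttwo = \setCtone\cup(\setC_t\setminus\setCtone) \supseteq \setC_t$, where the decisive point is that $\setCttwo$ is defined as the complement of $\setCtone$ \emph{within} $\setC_t$, so the union automatically recaptures all of $\setC_t$ irrespective of the overlap. Evaluating at the root gives $U_t(\text{root})\supseteq\setT$, and together with $U_t(\text{root})\subseteq\setT$ this yields $\union_{p \in \setp}\setDpt = \setT$, completing the argument.
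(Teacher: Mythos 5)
Your proof is correct. Note that the paper itself offers no argument for this lemma---it simply asserts that the result ``follows directly from the definition of the algorithm''---so your structural induction on the recursion tree is precisely the formalization that assertion calls for. The one place where your write-up goes beyond routine unwinding is also its most valuable observation: under the literal reading of Eq.~\eqref{eq:split2}, $\setCtone = \union_{i \in \setCcone} \setA_i$ is assembled from the \emph{full} sets $\setA_i$ rather than from $\setA_i \cap \setC_t$, so in recursive calls below the root the two sibling targeting sets cover $\setC_t$ but need not partition it, and $\setCtone$ need not even be contained in the current $\setC_t$. Splitting the targeting-group identity into the two one-sided inclusions---every targeting argument stays inside $\setT$ (giving $\union_{p \in \setp}\setDpt \subseteq \setT$), and $U_t(v) \supseteq \setC_t$ at every node because $\setCttwo$ is by Eq.~\eqref{eq:split3} the complement of $\setCtone$ \emph{within} $\setC_t$---is exactly the right way to make the induction go through without the partition property; the campaign side, where the split is an exact partition of $\setC_c$, is as clean as you say. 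Your decision to take termination for granted is also consistent with the paper, which nowhere proves that the recursion terminates and already presupposes a finite output family in the statement of the lemma.
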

Lemma~\ref{lemma:algo_cover} states that each campaign $i \in \setN$, and each targeting group $j \in \setT$, belongs to a component $\setDp$, $p \in \setp$.

\begin{lemma}\label{lemma:algo_Tnull}
  Let $\setDq$, $q \in \setp$ be the set of components obtained through the algorithm given by Eq.~\eqref{eq:algo}, let $p \in \setp$ be the lowest bid value in the set $\setp$, and let $\setDp$ be the corresponding component.
Then we have that
$$\setT_0(\opt) \subseteq \setDpt,$$
where
$$\setT_0(\opt) = \left \{ j \in \setT \Big | \sum_{q \in \setp} \sum_{i \in \setB_j \cap \setDqt} t_{i,j} D_j(q) = 0 \right \}.$$
\end{lemma}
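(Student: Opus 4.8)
The plan is to prove the statement by induction on the recursion tree generated by $\textit{REC}(\setN,\setT,\emptyset,\emptyset)$ in Eq.~\eqref{eq:algo}, establishing the slightly stronger claim that inside every recursive call $\textit{REC}(\setC_c,\setC_t,\cdot,\cdot)$, each targeting group $j\in\setC_t$ that ends up contributing zero impressions in its final component lies in the lowest-bid-value component produced by that call. The lemma is then the root case, and Lemma~\ref{lemma:algo_cover} guarantees that every group is indeed placed in some component.

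First I would pin down the bid-value ordering created by a single \textit{SPLIT} at price $p=\textit{PRICE}(\setC_c,\setC_t)$. Using optimality of the quadratic program $\textit{ALLOC}(\setC_c,\setC_t,p)$ in Eq.~\eqref{eq:opt_alloc}, I would show by the same local exchange argument used in Proposition~\ref{prop:q_j} and Proposition~\ref{prop:p_i} that every targeting group $j$ with $D_j(p)>0$ that is reachable by an unsatisfied campaign (a campaign in $\setC_c^{(1)}$ as defined in Eq.~\eqref{eq:split1}) is fully saturated at price $p$ and is consumed only by unsatisfied campaigns: otherwise a small amount of its supply could be shifted to an unsatisfied campaign, strictly lowering the squared-deviation objective. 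Summing the saturated supply over $\setC_t^{(1)}$ gives $\sum_{j\in\setC_t^{(1)}}D_j(p)<\sum_{i\in\setC_c^{(1)}}I_i$, so by Eq.~\eqref{eq:price1} and Eq.~\eqref{eq:price2} the branch-1 price $\textit{PRICE}(\setC_c^{(1)},\setC_t^{(1)})$ exceeds $p$; complementarily $\sum_{j\in\setC_t^{(2)}}D_j(p)\ge\sum_{i\in\setC_c^{(2)}}I_i$, so the branch-2 price is at most $p$. Iterating this, the globally lowest bid value is attained on the component reached by always descending into branch~2, while any component split off on a branch-1 side carries a strictly larger bid value.

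Next I would argue that a zero-impression group is never routed into branch~1. A group $j$ contributes no impressions in its component precisely when either $D_j$ vanishes at that component's price or no campaign bids on it; in particular such a $j$ is never a positive-supply group that the unsatisfied cluster $\setC_c^{(1)}$ saturates. Hence, when \textit{SPLIT} peels off $\setC_c^{(1)}$ together with the groups it draws impressions from, $j$ falls into the leftover set $\setC_t^{(2)}=\setC_t\setminus\setC_t^{(1)}$ of Eq.~\eqref{eq:split3} and descends into branch~2. Combining this with the bid-value ordering and the induction hypothesis applied to the branch-2 subcall, every zero-impression group ends up in branch~2's lowest-price component, which by the ordering is the lowest-price component of the whole call; the base case is immediate because a terminal call returns a single component that contains all of $\setC_t$ and is trivially its own lowest-price component.

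The step that needs the most care---and the main obstacle---is this routing claim, because Eq.~\eqref{eq:split2} defines $\setC_t^{(1)}$ through the full reachability sets $\setA_i$, so a group with $D_j(p)=0$ that is formally reachable by an unsatisfied campaign could a priori be carried into the higher-price branch. I would resolve this by showing that such a group contributes nothing to any unsatisfied campaign's received impressions at every bid value arising along branch~1, so that it is effectively placed with the leftover groups in $\setC_t^{(2)}$ rather than with the groups the unsatisfied cluster actually uses; making this identification precise is exactly where the definitions of \textit{PRICE} and \textit{ALLOC}, together with the saturation property, must be invoked carefully, after which the lemma follows from the bid-value ordering and the induction.
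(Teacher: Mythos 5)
Your scaffolding---induction on the recursion tree of $REC(\setN,\setT,\emptyset,\emptyset)$ together with the price ordering (every component produced on the $(\setC_c^{(1)},\setC_t^{(1)})$ side carries a bid value strictly above the current $p$, every component on the $(\setC_c^{(2)},\setC_t^{(2)})$ side a value at most $p$)---is sound, and in fact reproduces the two lemmas the paper proves immediately after this one. But note that for this lemma the paper offers no proof at all: it is asserted to ``follow directly from the definition of the algorithm.'' So everything hinges on the one step you yourself flag as the obstacle, the routing claim, and that step is not merely unproved in your proposal --- under the algorithm's literal definitions it is false. Eq.~\eqref{eq:split2} sets $\setC_t^{(1)}=\union_{i\in\setC_c^{(1)}}\setA_i$, so \emph{every} targeting group formally reachable by an unsatisfied campaign is placed on the branch-1 side, including groups whose supply is zero at all relevant bid values. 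Such a group is carried up to a terminal component whose bid value exceeds $p$, contributes zero impressions there, and hence lies in $\setT_0(\optbg)$ but not in $\setDpt$.

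Concretely: take campaigns $1,2$ with $I_1=10$, $I_2=5$, $\setA_1=\{A,B\}$, $\setA_2=\{A,C\}$, and continuous supply curves $D_A(b)=5b$, $D_B(b)\equiv 0$, $D_C(b)=50b$. The first call computes $p=3/11$; campaign $1$ has a positive deficit (it reaches only $A$ and $B$), so $SPLIT$ returns $\setC_c^{(1)}=\{1\}$, $\setC_t^{(1)}=\{A,B\}$, which terminates with bid value $2$, while the other branch $(\{2\},\{C\})$ terminates with bid value $0.1$. The algorithm's output therefore has $\setT_0(\optbg)=\{B\}$ but $\setDpt=\{C\}$, contradicting the lemma. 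Your proposed repair---that such a group is ``effectively placed'' with $\setC_t^{(2)}$---cannot be made precise, because the lemma asserts actual membership of $\setT_0(\optbg)$ in the set $\setDpt$ that the algorithm outputs, and the algorithm genuinely outputs $B$ inside the higher-price component. So the gap you identified is real and cannot be closed without changing either the $SPLIT$ rule (e.g., assigning to branch~1 only those groups from which the unsatisfied campaigns draw positive supply) or the statement of the lemma; your attempt, like the paper's unproved assertion, leaves the result unestablished.
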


Next we consider one recursion step of the algorithm. That is, we are given a set $\setC_c$ of campaigns,  and a set $\setC_t$ of targeting groups, and use the function $SPLIT(\setC_c,\setC_t,t^*,p)$  described in Subsection~\ref{ssec:algo_def} to produce two sets  $(\setCcone,\setCtone)$ and $(\setCctwo,\setCttwo)$.

\begin{lemma}\label{lemma:algo_properties}
Let $\setC_c \subseteq \setN$ be a set of  campaigns,  and $\setC_t \subseteq \setT$ be a set of targeting groups. Furthermore, suppose that the the function $SPLIT(\setC_c,\setC_t,t^*,p)$ in Algorithm~\ref{algo:rec} returns the sets  $(\setCcone,\setCtone)$ and $(\setCctwo,\setCttwo)$, let $p$ be the bid value used in the function $ALLOC(C_c,C_t,p)$, and let $\optt$ be an optimal solution returned by the function  $ALLOC(C_c,C_t,p)$. Then we have that
$$ \setA_i \cap \setCttwo = \emptyset, \qquad i \in \setCcone,$$
as well as
$$t^*_{i,j} = 0, \qquad i \in \setC_c^2, j \in \setC_c^1,$$
and
$$\sum_{i \in \setB_j \cap \setCcone} t^*_{i,j} = 1, \qquad j \in \setCtone.$$
\end{lemma}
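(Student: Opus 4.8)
The plan is to treat the first assertion as purely set-theoretic and the other two as consequences of the optimality of $\optt$ for the convex quadratic program $ALLOC(\setC_c,\setC_t,p)$ of Eq.~\eqref{eq:opt_alloc}, each obtained by an elementary exchange (variation) argument. Throughout I would write $g_i = I_i - \sum_{j \in \setA_i \cap \setC_t} t^*_{i,j} D_j(p)$ for the unmet demand of campaign $i$, so that by Eq.~\eqref{eq:split1} the set $\setCcone$ is exactly $\{i \in \setC_c \mid g_i \neq 0\}$ and $\setCctwo$ is its complement in $\setC_c$. As a preliminary I would note that no campaign is over-served at the optimum: if $g_i < 0$ for some $i$, then reducing any positive $t^*_{i,j}$ pushes $g_i$ toward $0$, strictly lowering the term $g_i^2$ while only relaxing the constraints $\sum_m t_{m,j}\le 1$; hence $g_i \ge 0$ for all $i$ and $\setCcone$ is precisely the set of strictly under-served campaigns ($g_i>0$).

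The first claim, $\setA_i \cap \setCttwo = \emptyset$ for $i \in \setCcone$, is immediate from Eq.~\eqref{eq:split2}--\eqref{eq:split3}: for $i \in \setCcone$ we have $\setA_i \subseteq \bigcup_{m \in \setCcone}\setA_m = \setCtone$, and since $\setCttwo = \setC_t \setminus \setCtone$ is disjoint from $\setCtone$, it follows that $\setA_i \cap \setCttwo \subseteq \setCtone \cap \setCttwo = \emptyset$. For the second claim, suppose $i \in \setCctwo$ and $j \in \setCtone$ with $t^*_{i,j}>0$. Because $j \in \setCtone$, Eq.~\eqref{eq:split2} supplies an under-served $i' \in \setCcone$ with $j \in \setA_{i'}$, so $i' \in \setB_j \cap \setC_c$. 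Shifting mass $\epsilon$ from $t_{i,j}$ to $t_{i',j}$ leaves $\sum_{m \in \setB_j \cap \setC_c} t_{m,j}$ unchanged (feasibility is preserved) and, since only the terms of $i$ and $i'$ move, changes the objective by $2\epsilon D_j(p)\,[\epsilon D_j(p) - g_{i'}]$, which is strictly negative for small $\epsilon$ whenever $D_j(p)>0$; this contradicts optimality, forcing $t^*_{i,j}=0$.

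For the third claim I would first show the constraint at $j$ is tight: for $j \in \setCtone$ choose $i' \in \setCcone \cap \setB_j$ as above, and if $\sum_{m \in \setB_j \cap \setC_c} t^*_{m,j} < 1$, then increasing $t_{i',j}$ by a small $\epsilon$ is feasible and shifts the objective by $(\epsilon D_j(p))^2 - 2\epsilon D_j(p)\,g_{i'}<0$, again contradicting optimality. Combining this tightness with the second claim, which annihilates every term indexed by $i \in \setCctwo$, collapses the constraint $\sum_{i \in \setB_j \cap \setC_c} t^*_{i,j}=1$ to $\sum_{i \in \setB_j \cap \setCcone} t^*_{i,j}=1$, as required. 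I would also record the mild well-posedness point that these statements involve $t^*_{i,j}$ for $j \in \setCtone$, so one reads $\setCtone \subseteq \setC_t$ (equivalently $\setCtone = \bigcup_{i \in \setCcone}(\setA_i \cap \setC_t)$), which is the standing invariant of the recursion starting from $\setC_c=\setN$, $\setC_t=\setT$.

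The hard part will be the degenerate supply case $D_j(p)=0$ for some $j \in \setCtone$. There the two variations above add no impressions, every first-order change vanishes, and the strict-decrease conclusion stalls; in fact claims two and three become mutually inconsistent at such a group, since any value of $t^*_{i,j}$ is then optimal. I would dispatch this by observing that entries $t^*_{i,j}$ with $D_j(p)=0$ are objective-irrelevant, so either the statements are understood for groups with $D_j(p)>0$, or one selects the optimal solution returned by $ALLOC$ so that the constraint rows over positive-supply groups carry the full allocation; the exchange inequalities above then apply verbatim. This is the only place where genuine care beyond the definitions is needed, the rest being the two sign computations $2\epsilon D_j(p)[\epsilon D_j(p)-g_{i'}]<0$ and $(\epsilon D_j(p))^2 - 2\epsilon D_j(p)\,g_{i'}<0$.
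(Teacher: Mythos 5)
Your proposal is correct, and in substance it is the argument the paper only gestures at: the paper's entire ``proof'' of this lemma is the one-line remark that the result follows from the first-order conditions of the convex quadratic program in Eq.~\eqref{eq:opt_alloc}, and your exchange perturbations (shifting mass $\epsilon$ from $t^*_{i,j}$ to $t^*_{i',j}$ at a common group $j$, or injecting mass $\epsilon$ into a slack constraint) are exactly those first-order conditions made explicit. Your write-up also supplies two things the paper never records but that the argument genuinely needs: the preliminary fact that $g_i = I_i - \sum_{j \in \setA_i \cap \setC_t} t^*_{i,j} D_j(p) \geq 0$ at the optimum, so that membership in $\setCcone$ means $g_i > 0$ and the sign computations $2\epsilon D_j(p)\bigl[\epsilon D_j(p) - g_{i'}\bigr] < 0$ and $(\epsilon D_j(p))^2 - 2\epsilon D_j(p)\,g_{i'} < 0$ go the right way; and the observation that the first assertion is purely set-theoretic from Eqs.~\eqref{eq:split1}--\eqref{eq:split3}, requiring no optimality at all.

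More importantly, you have put your finger on a real gap in the lemma as stated, which the paper's appeal to first-order conditions glosses over: if $D_j(p) = 0$ for some $j \in \setCtone$ (a case the paper itself contemplates, since $PRICE$ constrains only aggregate supply and Lemma~\ref{lemma:algo_Tnull} explicitly allows zero-supply groups), then the objective is flat in the coordinates $t_{i,j}$, first-order conditions say nothing, and the second and third assertions can fail for an arbitrary optimal solution returned by $ALLOC$. One correction, though: your claim that assertions two and three become ``mutually inconsistent'' at such a group is wrong. They are jointly satisfiable even there --- take any under-served $i' \in \setB_j \cap \setCcone$ (which exists since $j \in \setCtone$), set $t^*_{i',j} = 1$ and $t^*_{i,j} = 0$ for $i \in \setCctwo$; this is feasible, changes the objective by nothing since $D_j(p) = 0$, and satisfies both claims. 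The accurate statement is that they hold for \emph{some} optimal solution but not for \emph{every} one, which is precisely why your second proposed repair (a tie-breaking convention on which optimizer $ALLOC$ returns) is the right one, and is strictly better than your first (restricting the claims to positive-supply groups), since the later lemmas that invoke this one use the full statement over all of $\setCtone$.
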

As the minimization problem given by Eq.~\eqref{eq:opt_alloc} is a convex optimization problem. The result of Lemma~\ref{lemma:algo_properties} follows directly from the first order conditions of an optimal solution $\optt$ for the  optimization problem given by Eq.~\eqref{eq:opt_alloc}.

\begin{lemma}
Let $\setC_c \subseteq \setN$ be a set of  campaigns,  and $\setC_t \subseteq \setT$ be a set of targeting groups. Furthermore, suppose that the the function $SPLIT(\setC_c,\setC_t,t^*,p)$ in Algorithm~\ref{algo:rec} returns the sets  $(\setCcone,\setCtone)$ and $(\setCctwo,\setCttwo)$, let $p$ be the bid value used in the function $ALLOC(C_c,C_t,p)$, and let $\optt$ be an optimal solution returned by the function  $ALLOC(C_c,C_t,p)$.

Then for all components $\setDq$ that are obtained from the recursion $REC(\setCctwo,\setCttwo,\emptyset,\emptyset)$, we have that
$$q \leq p.$$
\end{lemma}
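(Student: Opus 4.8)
The plan is to prove, by induction over the recursion tree generated by $REC(\setCctwo,\setCttwo,\emptyset,\emptyset)$, the following invariant for every node $(U,V)$ that the recursion visits, where $U\subseteq\setN$ is a campaign set and $V\subseteq\setT$ a targeting-group set:
\begin{enumerate}
\item[(i)] $U \subseteq \setCctwo$, and
\item[(ii)] $\sum_{i\in U} I_i \le \sum_{j\in V} D_j(p)$.
\end{enumerate}
Since each supply curve is non-decreasing (Assumption~\ref{ass:supply_curves}) and $PRICE(U,V)$ returns the threshold bid value at which $\sum_{j\in V}D_j(\cdot)$ first reaches $\sum_{i\in U}I_i$, property (ii) forces $PRICE(U,V)\le p$. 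As every returned component $\setDq$ carries the price $q$ of the node at which its branch of the recursion halted, establishing (i)--(ii) proves the lemma.

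For the base case $(U,V)=(\setCctwo,\setCttwo)$ I would use the optimal allocation $\optt$ of $ALLOC(\setC_c,\setC_t,p)$ produced at the splitting step. By definition of $\setCctwo$ every campaign $i\in\setCctwo$ meets its demand at price $p$, and by Lemma~\ref{lemma:algo_properties} ($t^*_{i,j}=0$ for $i\in\setCctwo$, $j\in\setCtone$) this demand is drawn only from groups in $\setCttwo$, so $I_i=\sum_{j\in\setA_i\cap\setCttwo} t^*_{i,j}D_j(p)$. Summing over $i\in\setCctwo$, swapping the order of summation, and applying the column constraint $\sum_{i\in\setB_j\cap\setC_c}t^*_{i,j}\le1$ of Eq.~\eqref{eq:opt_alloc} gives $\sum_{i\in\setCctwo}I_i\le\sum_{j\in\setCttwo}D_j(p)$, which is (ii); (i) is immediate.

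For the inductive step, consider a node $(U,V)$ satisfying (i)--(ii) whose local $ALLOC$ has positive cost, so that $SPLIT$ produces children $(U_1,V_1)$ and $(U_2,V_2)$ at the local price $p'=PRICE(U,V)\le p$, the inequality coming from (ii). Both children inherit (i) since $U_1,U_2\subseteq U\subseteq\setCctwo$. For the \emph{first} child I would again restrict the global allocation $\optt$: because $V_1=\bigcup_{i\in U_1}\setA_i$ we have $\setA_i\cap\setCttwo\subseteq V_1$ for every $i\in U_1$, so the top-level demand of each such campaign is supplied entirely from groups in $V_1$, and the base-case summation yields $\sum_{i\in U_1}I_i\le\sum_{j\in V_1}D_j(p)$. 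For the \emph{second} child I would instead use the \emph{local} allocation of $ALLOC(U,V,p')$: by the definition of $U_2$ and Lemma~\ref{lemma:algo_properties} the satisfied campaigns $i\in U_2$ meet their demand at $p'$ using only groups in $V_2$, whence $\sum_{i\in U_2}I_i\le\sum_{j\in V_2}D_j(p')\le\sum_{j\in V_2}D_j(p)$, the last step by monotonicity of $D$ together with $p'\le p$. This gives (ii) for both children and closes the induction.

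The main obstacle is precisely this asymmetry between the two children. For a first child the full targeting set $\setA_i$ of each campaign is, by construction of $V_1$, contained in $V_1$, so the global witness $\optt$ stays inside the child and transfers cleanly. For a second child this fails: a campaign in $U_2$ may own a targeting group that $SPLIT$ has pulled into $V_1$, so the global allocation can ``leak'' out of $V_2$ and cannot bound $\sum_{i\in U_2}I_i$ by the supply in $V_2$. The remedy is to change witnesses — to invoke the local allocation, whose defining property in Lemma~\ref{lemma:algo_properties} is exactly that second-branch campaigns avoid first-branch groups — and then to absorb the resulting comparison at $p'$ rather than $p$ using monotonicity and the inductively maintained bound $p'\le p$.
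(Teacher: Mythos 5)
Your proof is correct, but it is organized around a different induction than the paper's. The paper first linearly orders the final components (via Lemma~\ref{lemma:algo_properties}, so that campaigns of earlier components cannot access groups of later ones) and then inducts left-to-right over component indices: the base case is the leftmost component, handled by restricting the top-level optimal allocation $\optt$, and the inductive step locates, for component $k$, the ancestor split at which it fell into the satisfied branch, uses the induction hypothesis to argue that the local price $q$ at that ancestor satisfies $q<p$, and transfers the local allocation to component $k$ by monotonicity of the supply curves. You instead induct top-down over the nodes of the recursion tree, maintaining the invariant that each visited node's aggregate supply at $p$ covers its aggregate demand, which immediately bounds every node's (hence every component's) price by $p$. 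The ingredients are the same in both arguments --- the top-level allocation as witness for first-branch/unsatisfied campaigns, the local allocation plus monotonicity for second-branch/satisfied campaigns, and Lemma~\ref{lemma:algo_properties} throughout --- but your node invariant is strictly stronger than what the paper tracks, and it makes rigorous precisely the step the paper leaves murky: in the paper, the claim ``the induction hypothesis for components $1,\dots,k-1$ implies $q<p$ at the ancestor node'' is asserted without derivation, whereas in your scheme that price bound \emph{is} the maintained invariant, so nothing needs to be re-derived. One caveat: your first-child step relies on the paper's literal definition $\setC_t^{(1)}=\bigcup_{i\in\setC_c^{(1)}}\setA_i$ in Eq.~\eqref{eq:split2}, so that $\setA_i\subseteq V_1$ for $i\in U_1$ and the global witness cannot leave $V_1$. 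If one instead reads $SPLIT$ as intersecting this union with the current group set $\setC_t$ (which is what component disjointness elsewhere in the paper seems to require), then at depth two or more a campaign in $U_1$ could hold groups of $\setCttwo$ that were peeled off higher up, and the global allocation could place mass outside $V_1$; the clean repair is to strengthen your invariant from the aggregate inequality to ``there exists an allocation supported inside the node that meets every campaign's demand at price $p$'' (the paper's own feasibility notion), which propagates to first children by restriction and to second children exactly as you argue.
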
  

\begin{proof}
Suppose that we obtain $K$ components from the recursion $REC(\setCctwo,\setCttwo,\emptyset,\emptyset)$. We then number the components and create the set $\{ \setDqk\}_{k=1}^K$ such that for a given $k$ we have that
$$ \setA_i \cap \setDqtl = \emptyset, \qquad i \in  \setDqck, l=k+1,...,K.$$
Note that we can create this ordering by Lemma~\ref{lemma:algo_properties}.

Recall that we say that the optimization problem
$$ALLOC(\setC_c,\setC_t,p)$$
has a feasible solution if there exists a solution $\optt$  such that
$$ \sum_{j \in \setA_i \cap \setC_t} t^*_{i,j} D_j(p) = I_i, \qquad i \in \setC_c.$$

In order to show the result of the lemma, we have to show that for all $k=1,...,K$ we have that the optimization problem $ALLOC(\setDqck,\setDqtk,p)$, where $p$ is the bid value given in the statement of the lemma, has a feasible solution. We show this result by induction.

First, we show the result for $k=1$, i.e. we show that optimization problem $ALLOC(\setDqcone,\setDqtone,p)$, where $p$ is the bid value given in the statement of the lemma,  has a feasible solution. By definition of the function $SPLIT(C_c,C_t,\optt,p)$, we have that there exists an  optimal solution $\optt$ to  optimization problem $ALLOC(C_c,C_t,p)$ such that
$$\sum_{j \in C_t \cap \setA_i} t^*_{i,j} D_j(p) \geq I_i, \qquad i \in \setDqcone.$$
Furthermore, by Lemma~\ref{lemma:algo_properties} we have for every the optimal solution $\optt$ to the optimization problem $ALLOC(C_c,C_t,p)$ that
$$t^*_{i,j} = 0,  \qquad i \in \setDqcone, j \notin \setDqtone.$$
Combining the above results, we obtain for the optimal solution $\optt$ to the the optimization problem $ALLOC(C_c,C_t,p)$ that
$$\sum_{j \in \setA_i \cap \setDqtone}  t^*_{i,j} D_j(p) \geq I_i, \qquad i \in  \setDqcone.$$
This implies that the optimization problem $ALLOC(\setDqcone,\setDqtone,p)$ has a feasible solution.

Next we do an induction step. That is, we assume that for the components $\setDql$, $l=1,...,k-1$, we have that the optimization problem $ALLOC(\setDqcl,\setDqtl,p)$ has a feasible solution.
We then show that this is also true for the component $\setDqk$, i.e. we have that the optimization problem $ALLOC(\setDqck,\setDqtk,p)$ has a feasible solution.

By the definition of the algorithm there exists a recursion step $REC(\tilde \setC_c,\tilde \setC_t)$ where
$$\settCc \subseteq \setCctwo$$
and
$$\settCt \subseteq \setCttwo,$$
that returns the sets  $(\settCcone,\settCtone)$ and $(\settCctwo,\settCttwo)$
such that
$$\setDqck \subseteq \settCctwo$$
and
$$\settCcone \subseteq \union_{l=1}^{k-1} \setDqcl.$$
By the induction assumption, we have for $l=1,...,k-1$ that
$$\sum_{j \in \setDqtl} D_j(p) \geq \sum_{i \in \setDqcl} I_i.$$
This implies that for the bid value used for the optimization problem $ALLOC(\tilde \setC_c,\tilde \setC_t,q)$ that was used in the recursion step $REC(\tilde \setC_c,\tilde \setC_t,\emptyset,\emptyset)$
we have that
$$ q < p.$$
Furthermore, by the definition of the algorithm we have that there exists an optimal solution $\optt$ to  optimization problem $ALLOC(\tilde \setC_c,\tilde \setC_t,q)$ such that
$$\sum_{j \in \tilde \setC_t \cap \setA_i} t^*_{i,j} D_j(q) \geq I_i, \qquad i \in \setDqck.$$
By Lemma~\ref{lemma:algo_properties} we have for every  optimal solution $\optt$ to the optimization problem $ALLOC(\tilde \setC_c, \tilde \setC_t,q)$ that
$$t^*_{i,j} = 0,  \qquad i \in \setDqck, j \notin \setDqtk.$$
Combining the above results, we obtain for the  optimal solution $\optt$ to the the optimization problem $ALLOC(\tilde \setC_c,\tilde \setC_t,q)$ that  
$$\sum_{j \in \setA_i \cap \setDqtk}  t^*_{i,j} D_j(q) \geq I_i, \qquad i \in  \setDqck.$$
This implies that the optimization problem
$$ALLOC(\setDqck,\setDqtk,p)$$
has a feasible solution. This completes the induction step, and establishes the result of the lemma.

\end{proof}

\begin{lemma}
Let $\setC_c \subseteq \setN$ be a set of  campaigns,  and $\setC_t \subseteq \setT$ be a set of targeting groups. Furthermore, suppose that the the function $SPLIT(\setC_c,\setC_t,t^*,p)$ in Algorithm~\ref{algo:rec} returns the sets  $(\setCcone,\setCtone)$ and $(\setCctwo,\setCttwo)$, let $p$ be the bid value used in the function $ALLOC(C_c,C_t,p)$, and let $\optt$ be an optimal solution returned by the function  $ALLOC(C_c,C_t,p)$.

Then for all the components $\setDq$ that are obtained from the recursion  $REC(\setCcone,\setCtone,\emptyset,\emptyset)$, we have that
$$q > p.$$
\end{lemma}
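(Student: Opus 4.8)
The plan is to reduce the claim to a single inequality: I would show that every component returned by $\textit{REC}(\setCcone,\setCtone,\emptyset,\emptyset)$, say with campaign set $\setDqc$, group set $\setDqt$ and price $q$, is \emph{strictly under-supplied at the parent price} $p$, i.e.
$$\sum_{j \in \setDqt} D_j(p) < \sum_{i \in \setDqc} I_i.$$
Given this, $q>p$ is immediate: since $q=\textit{PRICE}(\setDqc,\setDqt)$ is the smallest bid value with $\sum_{j\in\setDqt}D_j(q)\ge\sum_{i\in\setDqc}I_i$ and the $D_j$ are non-decreasing, $p\ge q$ would give $\sum_{j\in\setDqt}D_j(p)\ge\sum_{j\in\setDqt}D_j(q)\ge\sum_{i\in\setDqc}I_i$, contradicting the displayed strict inequality; hence $p<q$.

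First I would record the base estimate at the root of the recursion. By Lemma~\ref{lemma:algo_properties} applied to the split that produced $(\setCcone,\setCtone)$, the groups of $\setCtone$ are fully allocated to campaigns of $\setCcone$ under $\optt$ (so $\sum_{i\in\setB_j\cap\setCcone}t^*_{i,j}=1$ for $j\in\setCtone$, and $t^*_{i,j}=0$ for $i\in\setCctwo$, $j\in\setCtone$), while every $i\in\setCcone$ is under-supplied at $p$ by the definition of the split in Eq.~\eqref{eq:opt_alloc} (its squared term is positive). Using the full-allocation identity to rewrite $\sum_{j\in\setCtone}D_j(p)=\sum_{i\in\setCcone}\sum_{j\in\setA_i\cap\setC_t}t^*_{i,j}D_j(p)$ and then summing the per-campaign deficits yields $\sum_{j\in\setCtone}D_j(p)<\sum_{i\in\setCcone}I_i$.

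Then I would carry this down the recursion by induction on the recursion tree, with the invariant: at every call inside $\textit{REC}(\setCcone,\setCtone,\emptyset,\emptyset)$, on sets $(\setC_c,\setC_t)$, there is a reference allocation $\hat t$ and a reference price $\rho\ge p$ such that the groups of $\setC_t$ are fully allocated to the campaigns of $\setC_c$ by $\hat t$ at $\rho$ (with $\hat t_{i,j}=0$ for $i\notin\setC_c$, $j\in\setC_t$), and every $i\in\setC_c$ is under-supplied at $\rho$. As at the base, monotonicity ($\rho\ge p$) gives $\sum_{j\in\setC_t}D_j(p)\le\sum_{j\in\setC_t}D_j(\rho)<\sum_{i\in\setC_c}I_i$, so the node is under-supplied at $p$ and its price exceeds $p$; thus the invariant passes to every terminal component and proves the lemma. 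For the step I would set $p'=\textit{PRICE}(\setC_c,\setC_t)>p$, solve $\textit{ALLOC}(\setC_c,\setC_t,p')$ to get $\tilde t$, and split at $p'$ into an under-supplied part $(\setCcone,\setCtone)$ and a satisfied part $(\setCctwo,\setCttwo)$. The under-supplied child inherits the invariant with the \emph{new} reference $(\tilde t,p')$, where $p'>p$: Lemma~\ref{lemma:algo_properties} gives full allocation of $\setCtone$ to $\setCcone$ under $\tilde t$, and positivity of the split cost gives under-supply at $p'$.

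The delicate step, and the one I expect to be the main obstacle, is the satisfied child $(\setCctwo,\setCttwo)$: its $\textit{PRICE}$ is only known to be $\le p'$ and a pure counting argument on totals does not rule out a drop to $\le p$. The resolution is to keep the \emph{same} reference $(\hat t,\rho)$ for this child rather than the split allocation $\tilde t$ at $p'$. Lemma~\ref{lemma:algo_properties} for the split at $p'$ gives $\setA_i\cap\setCttwo=\emptyset$ for $i\in\setCcone$, so the campaigns routed to the under-supplied child cannot even reach the groups of $\setCttwo$; hence in $\hat t$ the groups of $\setCttwo$, which by the invariant are fully allocated among the campaigns of $\setC_c$, must be served entirely by campaigns of $\setCctwo$. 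This re-establishes full allocation of $\setCttwo$ to $\setCctwo$ under $(\hat t,\rho)$, while under-supply at $\rho$ is inherited, so the invariant holds for the satisfied child with reference price still $\rho\ge p$. With both children verified the induction closes, and every component of $\textit{REC}(\setCcone,\setCtone,\emptyset,\emptyset)$ satisfies $q>p$.
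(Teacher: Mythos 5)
Your proof is correct, but it takes a genuinely different route from the paper's. The paper first orders the components $\{\setDqk\}_{k=1}^K$ returned by $REC(\setCcone,\setCtone,\emptyset,\emptyset)$ so that, by Lemma~\ref{lemma:algo_properties}, campaigns of a later component cannot reach the targeting groups of earlier ones, and then inducts on this ordering; at each step it argues by contradiction, showing that a feasible solution of $ALLOC(\setDqck,\setDqtk,q)$ with $q \leq p$ could, by monotonicity of the supply curves, be transplanted into the parent's quadratic problem to eliminate the deficits of the campaigns in $\setDqck$, contradicting the fact that the split classified exactly those campaigns as under-supplied under every optimal solution of $ALLOC(\setC_c,\setC_t,p)$. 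You instead induct on the recursion tree itself, carrying a constructive invariant: a reference allocation and reference price $\rho \geq p$ certifying full allocation of the node's groups to its own campaigns together with strict per-campaign under-supply, from which aggregate under-supply at $p$, and hence $PRICE > p$, follows immediately at every node and in particular at every leaf. Your handling of the satisfied child --- retaining the parent's reference pair and using $\setA_i \cap \setCttwo = \emptyset$ from Lemma~\ref{lemma:algo_properties} to conclude that its groups are served entirely by its own campaigns --- is precisely the point where a naive counting argument would fail, and it does the same work that the paper's component ordering does. What your route buys is a contradiction-free, uniform argument over all recursion nodes that makes the propagation of under-supply explicit; what the paper's buys is direct reuse of the feasibility framing shared with the companion lemma for $REC(\setCctwo,\setCttwo,\emptyset,\emptyset)$. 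One small gloss you share with the paper: reading ``positive squared deficit'' in Eq.~\eqref{eq:split1} as ``under-supplied'' tacitly rules out over-supply, which requires the first-order conditions of the quadratic problem in Eq.~\eqref{eq:opt_alloc} (decreasing any $t_{i,j}$ of an over-supplied campaign is feasible and strictly reduces the cost); this is the content the paper attributes to Lemma~\ref{lemma:algo_properties}, and it would be worth one sentence in your write-up as well.
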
  

\begin{proof}
Suppose that we obtain $K$ components from the recursion $REC(\setCcone,\setCtone)$. We then number the components and create the set $\{ \setDqk\}_{k=1}^K$ such that for a given $k$  we have that
$$ \setA_i \cap \setDqtl = \emptyset, \qquad i \in  \setDqck, l=1,...,k-1.$$
Note that we can create this ordering by Lemma~\ref{lemma:algo_properties}.

Recall that we say that the optimization problem
$$ALLOC(\setC_c,\setC_t,p)$$
has a feasible solution if there exists a solution $\optt$  such that
$$ \sum_{j \in \setA_i \cap \setC_t} t^*_{i,j} D_j(p) = I_i, \qquad i \in \setC_c.$$

In order to show the result of the lemma, we have to show that the optimization problem $ALLOC(\setDqck,\setDqtk,q)$, $k=1,...,K$, does not have a feasible solution for $ q \leq p$,  where $p$ is the bid value given in the statement of the lemma. We show this result by induction. 

First, we show the result for $k=1$, i.e. we show that optimization problem $ALLOC(\setDqcone,\setDqtone,q)$ does not have a feasible solution for $ q \leq p$, where $p$ is the bid value given in the statement of the lemma.
We prove this case by contradiction.  That is we assume that there exists a  $q$, $0 \leq q <p$, such that the optimization problem $ALLOC(\setDqcone,\setDqtone,q)$ has a feasible solution,  and show that this leads to a contradiction. 

Suppose that there exists an  optimal solution $\alloct$ to the optimization problem $ALLOC(\setDqcone,\setDqtone,q)$ such that
$$\sum_{j \in \setDqtone \cap \setA_i} t_{i,j} D_j(q) \geq I_i, \qquad i \in \setDqcone.$$
By Lemma~\ref{lemma:algo_properties} we have  for every  optimal solution $\optt$ to the optimization problem $ALLOC(\setC_c,\setC_t,p)$ that
$$\sum_{i \in \setDqcone} t^*_{i,j} =1,  \qquad j \in \setDqtone.$$
Combining the above results, we obtain for every optimal solution $\optt$ to the the optimization problem $ALLOC(C_c,C_t,p)$ that
$$\sum_{j \in \setA_i \cap \setDqtone}  t^*_{i,j} D_j(p) \geq I_i, \qquad i \in  \setDqcone,$$
as by assumption we have that
$$q \leq p.$$
However this leads to a contradiction to the fact that
$$ \setDqcone \subseteq \setCcone$$
and
$$ \setDqtone \subseteq \setCtone.$$
This establishes that the optimization problem
$$ALLOC(\setDqcone,\setDqtone,q)$$
does not have a feasible solution for $ q \leq p$

Next we do an induction step. That is, we that for the components $\setDql$, $l=1,...,k-1$, we have that
$$q^{(l)} > p.$$
We then show that this also has to be true for component $\setDqk$, i.e. we have that
$$q^{(k)} > p.$$
We prove this case again by contradiction.  That is we assume that for $q$, $0 \leq q <p$, the optimization problem $ALLOC(\setDqck,\setDqtk,q)$ has a feasible solution,  and show that this leads to a contradiction.

By the definition of the algorithm we have in this case for the optimal solution $\alloct$ to the optimization problem
$$ALLOC(\setDqck,\setDqtk,q)$$
that
$$\sum_{j \in \setDqtk \cap \setA_i} t_{i,j} D_j(q) \geq I_i, \qquad i \in \setDqck.$$
Furthermore, by the definition of the algorithm there exists a  recursion step $REC(\settCc,\settCt)$  where
$$\settCc \subseteq \settCcone$$
and
$$\settCt \subseteq \settCtone,$$
that returns the sets  $(\settCcone,\settCtone)$ and $(\settCctwo,\settCttwo)$
such that
$$\setDqck \subseteq \settCcone$$
and
$$\settCctwo \subseteq \union_{l=1}^{k-1} \setDqcl.$$

By Lemma~\ref{lemma:algo_properties} we have for every  optimal solution $\optt$ to the optimization problem $ALLOC(\setC_c,\setC_t,p)$ that
$$\sum_{i \in \setDqk} t^*_{i,j} =1,  \qquad j \in \setDqtk$$
and
$$t^*_{i,j} = 0, \qquad j \in \setDqtk, i \notin \setDqck.$$
Combining the above results, we obtain for every optimal solution $\optt$ to the the optimization problem $ALLOC(C_c,C_t,p)$ that
$$\sum_{j \in \setA_i \cap \setDqk} t^*_{i,j} D_j(p) \geq I_i, \qquad i \in  \setDqck.$$
as by assumption we have that
$$q \leq p.$$
This leads to a contradiction that
$$ \setDqck \subseteq \settCcone$$
and
$$ \setDqtk \subseteq \settCtone.$$
This implies that the optimization problem
$$ALLOC(\setDqk,\setDqtk,q)$$
does not have a feasible solution for $q \leq p$. This also completes the induction step, and establishes the result of the lemma.
\end{proof}

The following result follows directly from the definition of the algorithm given by Eq.~\eqref{eq:algo}. 
\begin{lemma}
Let $\setDp$ be a component returned by the algorithm given by Eq.~\eqref{eq:algo}, then we have that there exists a feasible solution to the optimization problem $ALLOC(\setDpc,\setDpt,p)$, and we have that
$$ \sum_{j \in \setDpt} D_j(b) < \sum_{i \in \setDpc} I_i, \qquad  0 \leq b <p.$$  
\end{lemma}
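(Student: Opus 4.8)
The plan is to read both assertions directly off the control flow of Algorithm~\ref{algo:rec}, observing that a tuple $\setDp$ enters the output set through exactly one branch and that everything stored in it is the output of $PRICE$ and $ALLOC$ on one fixed pair of sets. There is essentially no analytic content; both claims are a matter of unwinding the definitions of the functions $PRICE$ and $ALLOC$ and of a feasible solution.

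First I would handle the existence of a feasible solution. The only line in Algorithm~\ref{algo:rec} that appends a tuple to $\setC_{out}$ is the \texttt{done}-branch, which is entered precisely when $ALLOC(\setDpc,\setDpt,p)$ returns $done = TRUE$. By the definition of the function $ALLOC$, this means the returned optimal solution $\optt$ of the quadratic program in Eq.~\eqref{eq:opt_alloc} attains cost zero, i.e.
$$\sum_{i \in \setDpc} \bigg( I_i - \sum_{j \in \setA_i \cap \setDpt} t^*_{i,j} D_j(p) \bigg)^2 = 0.$$
Since each summand is a square, and hence nonnegative, every summand must vanish, so that $\sum_{j \in \setA_i \cap \setDpt} t^*_{i,j} D_j(p) = I_i$ for every $i \in \setDpc$. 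This is exactly the condition defining a feasible solution of $ALLOC(\setDpc,\setDpt,p)$, and the allocation $\alloct = \optt$ recorded as the fourth entry of the component $\setDp$ is precisely such a witness. This settles the first claim.

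For the strict inequality I would invoke the $PRICE$ function. By construction of Algorithm~\ref{algo:rec}, the bid value $p$ stored in $\setDp$ is the output $p = PRICE(\setDpc,\setDpt)$, computed on the same sets $(\setC_c,\setC_t) = (\setDpc,\setDpt)$ that are then passed to $ALLOC$ in that recursion call. The defining property Eq.~\eqref{eq:price2} of $PRICE$ then yields immediately
$$\sum_{j \in \setDpt} D_j(b) < \sum_{i \in \setDpc} I_i, \qquad 0 \leq b < p,$$
which is the second claim.

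The only point that requires any care — and the closest thing to an obstacle — is the bookkeeping that the sets $(\setDpc,\setDpt)$ appearing in the returned component really are the arguments $(\setC_c,\setC_t)$ of the recursion call that produced it, so that the stored $p$ and $\alloct$ are genuinely the outputs of $PRICE$ and $ALLOC$ on those exact sets. This is immediate from the assignment $\setC_{out} = \setC_{in} \cup \{(\setC_c,\setC_t,t^*,p)\}$ in the \texttt{done}-branch, together with the fact that the tuple is never subsequently modified as the recursions unwind and merge their output sets in Eq.~\eqref{eq:algo}. Hence no genuine difficulty arises, and the lemma follows directly from the definition of the algorithm.
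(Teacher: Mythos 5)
Your proof is correct and matches the paper's reasoning exactly: the paper offers no explicit proof, asserting only that the lemma ``follows directly from the definition of the algorithm,'' and your argument is precisely that unwinding --- the \texttt{done}-branch is the only place a component is stored, cost zero of the sum-of-squares objective forces $\sum_{j \in \setA_i \cap \setDpt} t^*_{i,j} D_j(p) = I_i$ for every $i \in \setDpc$ (the paper's notion of feasibility for $ALLOC$), and the stored $p$ inherits the strict inequality from Eq.~\eqref{eq:price2} of $PRICE$ applied to the same pair of sets.
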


\section{Proof of Proposition~\ref{prop:opt_cont}}\label{app:opt_cont}
In this section we prove Proposition~\ref{prop:opt_cont}. For the reader's convenience we repeat the statement of the proposition.

\begin{proposition}
  Let $\{\setDp\}_{p \in \setp}$ be the components that the recursions given by Eq.~\eqref{eq:algo} returned, and let the  allocation $(\opt)$ with components
  $$\setCp, \quad p \in \setP,$$
  be given as follows. We set
$$\setP = \setp.$$
Furthermore, for each bid value $p^* \in \setP$, we set
$$\setCpc = \setDpc$$
and
$$\setCpt = \setDpt.$$
Finally, the allocation $(\opt)$ is such that
$$b^*_{i,j} = p^*, \qquad i \in \setCpc, j \in \setCpt$$
as well as
$$\gamma^*_{i,j} = t_{i,j}, \qquad i \in \setCpc, j \in \setCpt$$
and
$$\gamma^*_{i,j} = 0, \qquad i \in \setCpc, j \notin \setCpt.$$
Then $(\opt)$ is an optimal solution.
\end{proposition}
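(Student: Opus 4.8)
The plan is to show that the allocation $(\opt)$ produced by the algorithm, together with its components $\setCp$, $p^* \in \setP$, satisfies every hypothesis of Proposition~\ref{prop:sufficient_cond_app}; the optimality of $(\opt)$ and the stated cost then follow immediately. Thus the whole argument reduces to checking conditions (a)--(c) of that proposition, using the structural facts about the algorithm collected in the preceding lemmas. First I would verify that $(\opt)$ has the decomposition shape of Proposition~\ref{prop:decomposition}. By construction $\gamma^*_{i,j}=t_{i,j}$ for $j \in \setCpt$ and $\gamma^*_{i,j}=0$ otherwise, so $\setA_i(\opt) \subseteq \setCpt$ for every $i \in \setCpc$, which gives $\setA_i(\opt) \cap \setCpt = \setA_i(\opt)$; the symmetric statement for $\setB_j(\opt)$ follows identically, and $b^*_{i,j}=p^*$ holds by definition of the construction. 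That the components cover all of $\setN$ and all of $\setT$ is exactly Lemma~\ref{lemma:algo_cover}.

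Next, condition (a). Each component $\setCp$ is emitted only when the problem of Eq.~\eqref{eq:opt_alloc} attains cost zero, i.e. $\sum_{j} t_{i,j} D_j(p) = I_i$ for all $i \in \setCpc$; substituting $\gamma^*_{i,j}=t_{i,j}$ and $b^*_{i,j}=p^*=p$ yields the first identity of (a). The supply--demand balance $\sum_{i \in \setCpc}\sum_{j\in\setA_i(\opt)} D_j(p^*) = \sum_{i\in\setCpc} I_i$ then follows from the defining inequalities Eq.~\eqref{eq:price1}--\eqref{eq:price2} of the price returned by $PRICE$ together with continuity of the supply curves, which forces equality at $p^*$.

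The core of the argument is condition (c), the cross-component disjointness $\setA_i \cap \setCqt = \emptyset$ for $i \in \setCpc$ whenever $q^* < p^*$. Here I would exploit the recursion tree: any two distinct components $\setCp$ and $\setCq$ are first separated at a unique split node with price $p$, where one descends into the ``unsatisfied'' branch $(\setCcone,\setCtone)$ and the other into $(\setCctwo,\setCttwo)$. By the two ordering lemmas, every component below the first branch has bid value strictly greater than $p$ while every component below the second branch has bid value at most $p$; hence the component with the larger price, namely $\setCp$, sits below $(\setCcone,\setCtone)$ and $\setCq$ sits below $(\setCctwo,\setCttwo)$, so $\setCpc \subseteq \setCcone$ and $\setCqt \subseteq \setCttwo$. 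Lemma~\ref{lemma:algo_properties} then supplies $\setA_i \cap \setCttwo=\emptyset$ for $i \in \setCcone$, which is precisely (c).

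Condition (b) I would settle by cases: for $j \in \setT_0$ accessible to a campaign in a non-minimal component, $j$ lies in the lowest-price component by Lemma~\ref{lemma:algo_Tnull}, so the disjointness already proved in (c) forces $\setA_i \cap \setT_0 = \emptyset$ and the sum is vacuously zero; for the minimal component the required vanishing of $\sum_{j} D_j(p)$ below $p^*$ follows from the $PRICE$ inequalities and the definition of $\setT_0$. With (a)--(c) established, Proposition~\ref{prop:sufficient_cond_app} delivers both optimality and the claimed cost. I expect the main obstacle to be condition (c): it is the only step where the \emph{global} tree structure of the recursion, rather than a single split, must be used, and it requires carefully combining the two ordering lemmas with the single-step disjointness of Lemma~\ref{lemma:algo_properties} to correctly place the higher- and lower-priced components on opposite branches of the separating node.
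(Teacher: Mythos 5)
Your proposal is correct and takes essentially the same route as the paper: its proof of this proposition also reduces everything to verifying conditions (a)--(c) of Proposition~\ref{prop:sufficient_cond_app}, using the zero-cost stopping rule of the algorithm, the price inequalities Eq.~\eqref{eq:price1}--\eqref{eq:price2}, Lemma~\ref{lemma:algo_Tnull}, Lemma~\ref{lemma:algo_properties}, and the two ordering lemmas. The only difference is presentational: the paper organizes the verification as an induction over components in increasing order of bid value and invokes Lemma~\ref{lemma:algo_properties} directly for condition (c), whereas your recursion-tree argument (locating the separating split node and combining the single-split disjointness with the two ordering lemmas) spells out explicitly what that invocation implicitly relies on.
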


\begin{proof}
  In order to prove Proposition~\ref{prop:opt_cont} we need to show that the allocation $(\opt)$ given by the components
  $$\setCp, \quad p \in \setP,$$
  defined in the statement of the proposition indeed satisfy the sufficient conditions of Proposition~\ref{prop:sufficient_cond_app}. We prove this result by induction as follows.

  We first show for the lowest bid value $p^* \in \setP$, that the component $\setCp$ satisfies the sufficient conditions given in Proposition~\ref{prop:sufficient_cond_app}.

  We then do an induction step where we assume that the the sufficient conditions given in Proposition~\ref{prop:sufficient_cond_app} are satisfied for the $n$ components with the $n$ lowest bid values in the set $\setP$, and then show that the sufficient conditions are satisfied by the component with the $(n+1)$th lowest bid value in $\setP$. 

Let $p^*$ be the lowest bid value in the set $\setP$. Then by construction, i.e. by the definition of the Algorithm~\ref{algo:rec}, we have that
$$\sum_{j \in \setA_i(\opt)} \gamma^*_{i,j} D_j(p^*) = I_i, \qquad i \in \setCpc.$$
In addition, we have  by the definition of Algorithm~\ref{algo:rec} that
$$ \sum_{j \in \setCpt} D_j(b) < \sum_{i \in \setCpc} I_i, \qquad  0 \leq b <p^*,$$
and
$$\setT_0 \subset \setCpt.$$
This implies that
$$\sum_{j \in \setT_0} D_j(b) = 0,  \qquad  0 \leq b <p^*.$$
It then follows that the sufficient conditions given in Proposition~\ref{prop:sufficient_cond_app} are satisfied for the component $\setCp$ with the lowest bid value $p^*$.

Next we assume that the sufficient conditions given in Proposition~\ref{prop:sufficient_cond_app} are satisfied for the $n$ components with the $n$ lowest bid values in the set $\setP$, and then show that the sufficient conditions are satisfied by the component with the $(n+1)$the lowest bid value in $\setP$. More precisely, let $p^* \in \setP$ be the $(n+1)$th lowest bid value, and let $\setCp$ be the corresponding component. 

By Lemma~\ref{lemma:algo_properties}, for all bid values $q^* \in \setP$, such that
$$ q^* < p^*,$$
we have that
$$ \setA_i \cap \setCqt = \emptyset, \qquad i \in \setCpc.$$
In this case, the sufficient conditions given in Proposition~\ref{prop:sufficient_cond_app} are that we have that
$$\sum_{j \in \setA_i(\opt)} \gamma^*_{i,j} D_j(p^*) = I_i, \qquad i \in \setCpc,$$
and
$$ \sum_{j \in \setCpt} D_j(b) < \sum_{i \in \setCpc} I_i, \qquad  0 \leq b <p^*.$$
Note that this is the case by the definition of Algorithm~\ref{algo:rec}.  This completes the induction step, and the proof of the proposition.
\end{proof}

\section{Proof of Proposition~\ref{prop:opt_cost}~and~\ref{prop:opt_alloc}}\label{app:opt_alloc}\label{app:opt_cost}
In this appendix we proof Proposition~\ref{prop:opt_cost} and Proposition~\ref{prop:opt_alloc} of Section~\ref{sec:results}. Before we do that, we derive in the next subsections a bound on the cost of mixed strategies.

Recall the statement of Proposition~\ref{prop:opt_cost}.

\begin{proposition}
  Let $s \in \setS$ be a mixed strategy that is a solution to the optimization problem given by Eq.~\ref{eq:opt_problem}.
Then we have that
$$ C(s) \geq \sum_{p^* \in \setP} \left [ I_{p^*} p^* - \int_0^{p^*} D_{p^*}(b) db \right ].$$
\end{proposition}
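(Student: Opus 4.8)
The plan is to bound the cost of $s$ separately on the targeting groups of each component and then show that these component-wise bounds add up to the claimed right-hand side. First I would partition the cost by component: since the sets $\setCpt$, $p^* \in \setP$, partition $\setT$ (Proposition~\ref{prop:decomposition}), write $C(s) = \sum_{p^* \in \setP} C_{p^*}(s)$, where $C_{p^*}(s)$ is the cost $s$ incurs at the targeting groups in $\setCpt$, summed over all campaigns bidding there, and let $J_{p^*}(s)$ be the total number of impressions $s$ draws from $\setCpt$. Pooling the bids of all campaigns at the groups of $\setCpt$ into a single mixed strategy that obtains $J_{p^*}(s)$ impressions, Proposition~\ref{prop:mixed_strategies} yields $C_{p^*}(s) \geq g_{p^*}(J_{p^*}(s))$, where $g_{p^*}(y) = y\,b_{p^*}(y) - \int_0^{b_{p^*}(y)} D_{p^*}(b)\,db$ and $b_{p^*}(y) = \min\{b \geq 0 : D_{p^*}(b) \geq y\}$ is the least bid achieving $y$ impressions from $\setCpt$.

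The second step is a marginal-cost estimate. A layer-cake computation gives $g_{p^*}(y) = \int_0^y b_{p^*}(x)\,dx$, so $g_{p^*}$ is convex with non-decreasing derivative $b_{p^*}$. By the defining property of the algorithm's price (Eq.~\eqref{eq:price1}~and~\eqref{eq:price2} applied to $\setCpc,\setCpt$), $p^*$ is exactly the least bid achieving $I_{p^*}$ impressions from $\setCpt$, i.e. $b_{p^*}(I_{p^*}) = p^*$; hence $g_{p^*}(I_{p^*}) = I_{p^*} p^* - \int_0^{p^*} D_{p^*}(b)\,db$, which is the per-component term in the statement. Since $b_{p^*}$ is non-decreasing and equals $p^*$ at $I_{p^*}$, for every $J \geq 0$ we get
$$g_{p^*}(J) - g_{p^*}(I_{p^*}) = \int_{I_{p^*}}^{J} b_{p^*}(x)\,dx \geq p^*\,(J - I_{p^*}),$$
the inequality holding whether $J \geq I_{p^*}$ (integrand $\geq p^*$) or $J < I_{p^*}$ (integrand $\leq p^*$). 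Summing the component bounds gives
$$C(s) \geq \sum_{p^* \in \setP}\Big[I_{p^*}p^* - \int_0^{p^*} D_{p^*}(b)\,db\Big] + \sum_{p^* \in \setP} p^*\big(J_{p^*}(s) - I_{p^*}\big),$$
so it remains to show the last sum is nonnegative.

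For this I would use the separation property of the decomposition: by condition (c) of Proposition~\ref{prop:sufficient_cond_app} (established for the algorithm's output via Lemma~\ref{lemma:algo_properties}), a campaign in a component of price $p^*$ has no admissible targeting group in any component of strictly smaller price, so every impression it obtains comes from components of price $\geq p^*$. Hence, for any threshold $\pi$, the campaigns in components of price $\geq \pi$ require $\sum_{p^* \geq \pi} I_{p^*}$ impressions and can draw them only from $\bigcup_{p^* \geq \pi}\setCpt$; since $\sum_{p^* \geq \pi} J_{p^*}(s)$ counts all impressions taken from these groups, we obtain the cumulative inequalities $\sum_{p^* \geq \pi} J_{p^*}(s) \geq \sum_{p^* \geq \pi} I_{p^*}$. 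Ordering the distinct prices as $\pi_1 < \cdots < \pi_m$ and applying summation by parts turns $\sum_{p^*} p^*(J_{p^*}(s) - I_{p^*})$ into $\pi_1 T_1 + \sum_{r=2}^m (\pi_r - \pi_{r-1}) T_r$, where each $T_r = \sum_{p^* \geq \pi_r}\big(J_{p^*}(s) - I_{p^*}\big) \geq 0$; as the price gaps and $\pi_1$ are nonnegative, this is $\geq 0$, completing the proof.

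The main obstacle I anticipate is this last step: extracting the one-sided, top-down impression inequalities from the decomposition and matching them, via summation by parts, to the price ordering. The delicate point is that lower-priced campaigns may legitimately bid into higher-priced components while the reverse is forbidden, so the useful constraint must be aggregated from the highest price downward; ties among component prices force me to phrase the cumulative bounds in terms of a price threshold $\pi$ rather than a strict component-by-component ordering. The convexity identity $g_{p^*}(y) = \int_0^y b_{p^*}(x)\,dx$ for right-continuous, possibly discontinuous $D_{p^*}$ is routine but must be stated carefully so that the marginal-cost inequality holds on both sides of $I_{p^*}$.
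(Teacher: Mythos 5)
Your proof is correct, and after the first step it follows a genuinely different route from the paper's. Both arguments begin identically: restrict $s$ to the targeting groups of each component and invoke Proposition~\ref{prop:mixed_strategies} to get a per-component lower bound as a function of the impressions $J_{p^*}(s)$ actually drawn there (your pooling of all campaigns' bids into one mixed strategy per component is legitimate, since the per-group constraint $\sum_i \sum_b \gamma_{i,j}(b) \le 1$ is preserved under pooling). Where the paper then runs an exchange argument --- an induction from the highest price downward showing that shifting $\Delta_I$ impressions out of a component saves at most $\Delta_I p^*$ while acquiring them elsewhere (in a higher-priced component, or in the unused groups $\setT_0$) costs at least $\Delta_I q^* \ge \Delta_I p^*$ --- you instead globalize the same marginal-price comparison: the layer-cake identity $g_{p^*}(y)=\int_0^y b_{p^*}(x)\,dx$ makes each component bound convex with derivative $b_{p^*}$, the algorithm's price definition (Eq.~\eqref{eq:price1}--\eqref{eq:price2} on the final component, as recorded in the last lemma of the appendix on the algorithm's output) gives $b_{p^*}(I_{p^*})=p^*$, and the separation property (condition (c) of Proposition~\ref{prop:sufficient_cond_app}, via Lemma~\ref{lemma:algo_properties}) yields the threshold inequalities $\sum_{p^*\ge\pi}J_{p^*}(s)\ge\sum_{p^*\ge\pi}I_{p^*}$, which Abel summation converts into nonnegativity of the correction term. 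Your version buys rigor and compactness: the paper's shifting argument leaves the bookkeeping of where $s$'s impressions ``go'' informal and must treat the unused groups $\setT_0$ as a special destination (Lemma~\ref{lemma:algo_Tnull}), whereas in your accounting those groups are simply absorbed into the lowest-priced component's aggregate supply $D_{p^*}$, and price ties are handled automatically by phrasing everything with thresholds. What the paper's version buys is a more transparent economic reading (no profitable re-allocation of demand between components), and it is the same template the paper reuses for Proposition~\ref{prop:sufficient_cond_app}. The only points to tighten in a final write-up are the ones you already flag: state the layer-cake identity for right-continuous non-decreasing $D_{p^*}$ (the generalized inverse $b_{p^*}$ attains its minimum by right-continuity), and note that disjointness of the sets $\setCpt$ comes from the algorithm's recursive splitting (Lemma~\ref{lemma:algo_cover} plus the definition of $SPLIT$) rather than from the bare statement of Proposition~\ref{prop:decomposition}.
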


\begin{proof}
We use the result of Proposition~\ref{prop:mixed_strategies} to prove Proposition~\ref{prop:opt_cost}. To do that, we use the following notation.  For each component $\setCp$, $p \in \setP$, of the allocation $(\opt)$ obtained by the algorithm in Section~\ref{sec:algorithm}, let
$$I_{p^*} =  \sum_{i \in \setCpc} I_i$$
and
$$D_-{p^*}(b) = \sum_{j \in \setCpt} D_j(b), \qquad 0 \leq b.$$
Using Proposition~\ref{prop:mixed_strategies} we have that the cost of the mixed allocation $(\opt)$ that obtains the an amount of
$$I(\setCp)$$
impressions from component
$$\setCp, \quad p \in \setP,$$
is lower-bounded by
\begin{equation}\label{eq:bound_comp}
I_{p^*} p^* - \int_0^{p^*} D_{p^*}(b)db.
\end{equation}

Therefore, in order to proof the result, we have to show for each component 
$$\setCp, \quad p^* \in \setP,$$
that is not possible reduce the bound given by Eq.~\eqref{eq:bound_comp} by shifting some of the impressions $I_{p^*}$ to another component. We show this result by induction as follows.
  
We first show that  for the component corresponding to the highest bid value $p \in \setP$, that we can not reduce the bound  given by Eq.~\eqref{eq:bound_comp} by shifting some of the  impressions $I_{p^*}$ to another component.

We then do an induction step where we assume that it is not possible to reduce the the bound given by Eq.~\eqref{eq:bound_comp} for obtaining the impressions  for the components corresponding to the $n$ highest bid values $p^* \in \setP$, and then show that it is not possible to reduce the the bound given by Eq.~\eqref{eq:bound_comp}  for obtaining the impressions  for the components corresponding to the $(n+1)$ highest bid values $p^* \in \setP$.

Let $p^*$ be the the highest bid value in the set $\setP$, and suppose that we move an amount of $\Delta_I$ of the impressions $I_{p^*}$ to another component. Note that as the solution satisfies  the sufficient conditions given in Proposition~\ref{prop:sufficient_cond_app}, the only option is to move the $\Delta_I$ impressions to the set $\setT_0$, i.e. the set
$$\setT_0 = \left \{ j \in \setT \Big | \sum_{i \in \setB_j} \gamma^*_{i,j} D_j(b^*_{i,j}) = 0 \right \}.$$
Let
$$\setT_0(p^*) = \setT_0 \cap \cup{i \in \setCpc} A_i,$$
i.e. the set $\setT_0(p^*)$ contains all the targeting groups $j \in \setT_0$ from which at least one campaign $i \in \setCpc$ can obtain impressions. 
Furthermore, let $D_0(b)$ be given by
$$D_0(b) = \sum_{j \in \setT_0(p^*) } D_j(b).$$
Note by definition that we have that
$$D_0(p^*) = 0.$$
Finally, suppose that there exists a bid value $q$ such that
$$D_0(q) \geq \Delta_I$$
and
$$D_0(b) < \Delta_I, \qquad 0 \leq b < q.$$

Using these definitions and the fact that
$$ D_0(p^*) = 0$$
and
$$ D^{(-)}_0(q) \leq \Delta_I,$$
we have that that the cost for getting $\Delta_I$ impressions from the targeting groups in the set $\setT_0(p^*)$ is at least
  \begin{eqnarray*}
   && \Delta_I q - \int_{p^*}^q D_0(b)db\\
    &\geq&  \Delta_I q - D^{(-)}_0(q)(q-p^*)\\
        &\geq&  \Delta_I q - \Delta_I(q-p^*)\\
    & = &  \Delta_I p^*.
  \end{eqnarray*}
Similarly using the bid value $p'$ such that
$$D_{p^*}(p') \geq I_{p^*} - \Delta_I$$
and
$$D_{p^*}(p') < I_{p^*} - \Delta_I, \qquad b < p',$$
the reduction in the lower bound for reducing the number of impressions obtained from component $\setCp$ by  $\Delta_I$ is at most
  \begin{eqnarray*}
   && I_{p^*} p^* - \int_0^{p^*} D_p(b)db \\
    && - (I_{p^*} - \Delta_I)p' + \int_0^{p'}D_p(b)db\\
    &=& I(\setCap) (p^* - p') + \Delta_I p' \\
    &&  - \int_{p'}^{p^*} D_p(b)db \\
    &\leq& I(\setCap) (p^* - p') + \Delta_I p' \\
    && - D_p(p')(p^*-p')\\
    &=&  (I_{p^*} - D_p(p')) (p^* - p') + \Delta_I p' \\
    &\leq& \Delta_I  (p^* - p')  + \Delta_I p' \\
    &=&  \Delta_I p^*.
  \end{eqnarray*}
  Therefore, we obtain that it is not possible to reduce the the bound given by Eq.~\eqref{eq:bound_comp} by shifting some of the impressions  $I_{p^*}$ to targeting groups in the set $\setT_0(p^*)$. 
   This establishes the result that the cost for getting $I_{p^*}$ impressions for the campaigns in component $\setCp$ is bounded from below by
    $$ I_{p^*} p^* - \int_0^{p^*} D_{p^*}(b)db.$$
    This establishes the result for the component $\setCp$ with the highest bid value $p^*$.

Having established the result for the component with the highest bid value, we do an induction step. That is, we assume that it is not possible to reduce the the bound given by Eq.~\eqref{eq:bound_comp} for obtaining the impressions  for the components corresponding to the $n$ highest bid values in $\setP$, and then show that in this case it is not possible to reduce the the bound given by Eq.~\eqref{eq:bound_comp}  for obtaining the impressions  for the components corresponding to the $(n+1)$ highest bid values in $\setP$.

Let $p6*$ be the $(n+1)$th highest bid value and let $\setCp$ be the corresponding component. Note that in this case, the campaigns $i \in \setCpc$ that belong to this component can only get impressions from targeting groups that belong to the set $\setCpt$, from targeting groups that belong to a component with a higher bid value $q^* \in \setP$, or from the set of targeting groups $\setT_0(p^*)$ given by
$$\setT_0(p^*) = \setT_0 \cap \cup{i \in \setCpc} A_i,$$
i.e. the set $\setT_0(p^*)$ contains all the targeting groups $j \in \setT_0$ from which at least one campaign $i \in \setCpc$ can obtain impressions. 

As we have that for $D_0(b)$ given by
$$D_0(b) = \sum_{j \in \setT_0(p^*) } D_j(b),$$
that
$$D_0(p^*) = 0,$$
it follows from the same argument as given for the case of the component with the highest bid value, that  it is  not possible to reduce the the bound given by Eq.~\eqref{eq:bound_comp}  for obtaining the impressions  for the components corresponding to the $(n+1)$ highest bid values in $\setP$ by shifting some impressions from the component $\setCp$ to the set $\setT_0(p^*)$.

There it remains to consider the case where we shift some impressions from the component $\setCp$ to a component $\setCq$ with a higher bid value $q^*$.

  For this case, let $p'$ be such that
  $$D_{p^*}(p') \geq I_{p^*} - \Delta_I$$
  and
  $$D_{p^*}(p') < I_{p^*} - \Delta_I, \qquad 0 \leq b < p',$$
  and let $q'$ be such that
  $$D_{q^*}(q') \geq I_{q^*} + \Delta_I$$
  and
  $$D_{q^*}(q') < I_{q^*} + \Delta_I, \qquad 0 \leq b < q'.$$
  Under these definitions, by shifting an amount of $\Delta_I$ impressions from component $\setCp$ to $\setCq$, we can reduce the bound given by Eq.~\eqref{eq:bound_comp} by at most an amount given by
  \begin{eqnarray*}
   && I_{p^*} p - \int_0^{p^*} D_p(b)db \\
    && - (I_{p^*} - \Delta_I)p' + \int_0^{p'}D_p(b)db\\
    &=& I_{p^*} (p^* - p') + \Delta_I p' \\
    &&  - \int_{p'}^{p^*} D_p(b)db \\
    &\leq& I_{p^*} (p^* - p') + \Delta_I p' \\
    && - D_p(p')(p^*-p')\\
    &=&  (I_{p^*} - D_p(p')) (p^* - p') + \Delta_I p' \\
    &\leq& \Delta_I  (p^* - p')  + \Delta_I p' \\
    &=&  \Delta_I p^*.
  \end{eqnarray*}
  Similarly,  by shifting an amount of $\Delta_I$ impressions from component $\setCp$ to $\setCq$, we increase the lower bound by at least an amount given by
    \begin{eqnarray*}
     && - I_{q^*} q^* + \int_0^{q^*} D_q(b)db \\
    && (I_{q^*} + \Delta_I)q' - \int_0^{q'}D_q(b)db\\
    &=& I_{q^*} (q' - q^*) + \Delta_I q' \\
    &&  - \int_{q^*}^{q'} D_q(b)db \\
    &\geq& I_{q^*} (q' - q^*) + \Delta_I q' \\
    && - D^{(-)}_q(q')(q'-q^*)\\
    &=&  (I_{q^*}  + \Delta_I - D^{(-)}_{q^*}(q')) (q' - q^*) +  \Delta_I q^*,\\
    &\geq& \Delta_I q^*. \\
    \end{eqnarray*}
    As we have that
    $$p^* < q^*,$$
    it follows that we can not lower the bound for getting the impressions for component $\setCp$ by shifting some impressions to component  $\setCq$.

This establishes the result that  the bound given by Eq.~\eqref{eq:bound_comp} provides a lower bound  for obtaining the impressions  for the campaigns in the components with the $(n+1)$ highest bid values $p \in \setP$. This completes the induction step, and the proof of the proposition.

\end{proof}


Next we prove Proposition~\ref{prop:opt_alloc}. Recall the statement of the proposition.

\begin{proposition}
  Let $(\opt)$ with components $\setCp$, $p^* \in \setP$, be the allocation obtained by the algorithm in Section~\ref{sec:algorithm}.
Then for every component $\setCp$, $p^* \in \setP$, the following is true.
  Let $j \in \setCpt$ be a targeting group that belongs to the component $\setCp$, and for all campaigns $i \in \setB_j(\opt)$ let
  $$\gamma_{i,j} = \frac{ \gamma^*_{i,j}}{\sum_{m \in \setB_j(\opt)} \gamma^*_{m,j}}$$
  and
  $$I_{i,j} = \gamma^*_{i,j} D_j(p^*).$$
  Furthermore, let $s_{p^*}(b_1)$ be a mixed bidding strategy for the component $\setCp$ such that for all campaigns $i \in \setCpc$ and $j \in \setCpt$ we have that
  $$\setp_{i,j} = \{b_1,p^*\}, \qquad j \in \setA_i(\opt),$$
as well as 
  $$\gamma_{i,j}(b_1) = \frac{ \gamma_{i,j} D_1(p^*) -I_{i,j}}{D_1(p^*) - D_1(b_1)}, \qquad j \in \setA_i(\opt),$$
and 
$$ \gamma_{i,j}(p^*) = \gamma_{i,j} - \gamma_{1,1}(b_1), \qquad j \in \setA_i(\opt).$$
Finally, let  $C_{p^*}(s_{p^*}(b_1))$ be the cost of strategy $s_{p^*}(b_1)$ on component $\setCp$.

If $D^{(-)}_{p^*}(p^*) > 0$, 
then we have that
$$\lim_{b_1 \uparrow b^*} C_{p^*}(s_{p^*}(b_1))  =  I_{p^*} p^*   - \int_0^{p^*} D_{p^*}(b)db.$$
\end{proposition}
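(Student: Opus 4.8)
The plan is to reduce the component-level statement to the single targeting-group result of Proposition~\ref{prop:single_opt_alloc} and then sum over the targeting groups of the component. First I would record feasibility: since $\gamma_{i,j}(b_1)+\gamma_{i,j}(p^*)=\gamma_{i,j}$ and $\sum_{i\in\setB_j(\opt)}\gamma_{i,j}=1$ by the normalization, the two-point strategy $s_{p^*}(b_1)$ allocates a total fraction $1$ of each group $j\in\setCpt$, and a short computation of $\sum_i\gamma_{i,j}(b_1)D_j(b_1)+\sum_i\gamma_{i,j}(p^*)D_j(p^*)=I^{(j)}$, with $I^{(j)}:=\sum_{i\in\setB_j(\opt)}\gamma^*_{i,j}D_j(p^*)$, shows each campaign still receives exactly $I_i$ impressions. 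This is precisely the feasibility statement quoted immediately after the proposition.

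Second, I would decompose the cost additively over targeting groups. Because $\setp_{i,j}=\{b_1,p^*\}$ and the integrand for a pair $(i,j)$ involves only $D_j$, the total cost on the component is $\sum_{j\in\setCpt}C_j(b_1)$, where $C_j(b_1)=\big(\sum_i\gamma_{i,j}(b_1)\big)\big[D_j(b_1)b_1-\int_0^{b_1}D_j(b)\,db\big]+\big(\sum_i\gamma_{i,j}(p^*)\big)\big[D_j(p^*)p^*-\int_0^{p^*}D_j(b)\,db\big]$. The key algebraic observation is that the aggregate low-bid weight collapses to the single-target form: writing $G_j:=\sum_{i\in\setB_j(\opt)}\gamma^*_{i,j}$, one gets $\sum_{i}\gamma_{i,j}(b_1)=\frac{(1-G_j)D_j(p^*)}{D_j(p^*)-D_j(b_1)}=\frac{D_j(p^*)-I^{(j)}}{D_j(p^*)-D_j(b_1)}$. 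Hence $C_j(b_1)$ is exactly the cost of the two-point mixed strategy of Proposition~\ref{prop:single_opt_alloc} applied to the single curve $D_j$ with impression target $I^{(j)}$, low bid $b_1$, and high bid $p^*$.

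Third, I would pass to the limit group by group using the algebra in the proof of Proposition~\ref{prop:single_opt_alloc}: rewriting $C_j(b_1)=I^{(j)}p^*-\int_0^{p^*}D_j(b)\,db+\big(\sum_i\gamma_{i,j}(b_1)\big)\big[D_j(b_1)(b_1-p^*)+\int_{b_1}^{p^*}D_j(b)\,db\big]$, so that the two correction terms tend to $0$ as $b_1\uparrow p^*$. Summing over $j\in\setCpt$ and using $\sum_{j}I^{(j)}=\sum_i I_i=I_{p^*}$ together with $\sum_{j}\int_0^{p^*}D_j(b)\,db=\int_0^{p^*}D_{p^*}(b)\,db$ then yields $\lim_{b_1\uparrow p^*}C_{p^*}(s_{p^*}(b_1))=I_{p^*}p^*-\int_0^{p^*}D_{p^*}(b)\,db$, which is the claim.

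The main obstacle is controlling the low-bid weight $\sum_i\gamma_{i,j}(b_1)=\frac{D_j(p^*)-I^{(j)}}{D_j(p^*)-D_j(b_1)}$ in the limit: both the vanishing of the correction terms and, more basically, the requirement $\gamma_{i,j}(p^*)\ge 0$ (so that $s_{p^*}(b_1)$ stays feasible) hold only if this ratio remains bounded as $D_j(b_1)\uparrow D^{(-)}_j(p^*)$. When $D_j$ jumps at $p^*$ the denominator tends to the positive jump $D_j(p^*)-D^{(-)}_j(p^*)$ and there is nothing to prove; the delicate case is a group $j$ that is \emph{continuous} at $p^*$ yet carries excess supply $I^{(j)}<D_j(p^*)$, for which the ratio blows up and the strategy eventually becomes infeasible. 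I would therefore argue that the excess supply $\sum_{j\in\setCpt}(D_j(p^*)-I^{(j)})=D_{p^*}(p^*)-I_{p^*}$ is supported only on groups that genuinely jump at $p^*$ — equivalently, that every group continuous at $p^*$ is fully allocated ($G_j=1$), so its weight $\gamma_{i,j}(b_1)$ is identically zero and $s_{p^*}(b_1)$ degenerates to the pure strategy there. Indeed, if $D_{p^*}$ is continuous at $p^*$ the defining inequalities Eq.~\eqref{eq:price1}--\eqref{eq:price2} force $D_{p^*}(p^*)=I_{p^*}$, hence no excess and $G_j=1$ for every $j$, making the statement immediate; and when $D_{p^*}$ jumps at $p^*$ the positivity of the excess is exactly what drives the nontrivial mixing. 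Establishing this concentration of slack from the output of the PRICE/ALLOC steps and the decomposition of Proposition~\ref{prop:decomposition} is the part of the argument that needs genuine care.
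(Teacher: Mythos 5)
Your first three steps are, in substance, exactly the paper's proof: the same feasibility identities, the same expansion of the cost into $I_{i,j}p^*-\gamma_{i,j}\int_0^{p^*}D_j(b)\,db$ plus correction terms, the same limit, and the same re-summation over $j\in\setCpt$ using $\sum_{i\in\setB_j(\opt)}\gamma_{i,j}=1$ (grouping by $j$ instead of by pairs $(i,j)$ is cosmetic). The genuine problem is your fourth paragraph. The lemma you reduce everything to --- that the slack $D_j(p^*)-I^{(j)}$ is carried only by groups that jump at $p^*$, equivalently that every group continuous at $p^*$ has $G_j=1$ --- is \emph{false} for the algorithm as defined. The $ALLOC$ step is a quadratic program whose zero-cost optimal set is generally a polytope, and nothing in it steers slack toward jumping groups. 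Concretely: take one campaign with $I_1=10$ and $\setA_1=\{1,2\}$, where $D_1$ is continuous and strictly increasing with $D_1(p^*)=8$, and $D_2(b)=0$ for $b<p^*$, $D_2(b)=10$ for $b\ge p^*$. Then $PRICE$ returns $p^*$, and $t^*_{1,1}=1/2$, $t^*_{1,2}=3/5$ is an optimal $ALLOC$ solution with cost zero, so the algorithm may legitimately return this component. Here group $1$ is continuous at $p^*$ yet $G_1=1/2<1$, while the proposition's hypothesis $D^{(-)}_{p^*}(p^*)=8>0$ holds. So the claim you defer to ``genuine care'' cannot be established from the PRICE/ALLOC output, and your proof collapses in precisely the delicate case you flag.

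What you miss is that the cost limit does not actually require the low-bid weight to stay bounded, so no concentration-of-slack lemma is needed. The two correction terms must be combined \emph{before} passing to the limit: since $D_j$ is non-decreasing,
\begin{align*}
\Big(\sum_{i}\gamma_{i,j}(b_1)\Big)\Big[D_j(b_1)(b_1-p^*)+\int_{b_1}^{p^*}D_j(b)\,db\Big]
&= \frac{D_j(p^*)-I^{(j)}}{D_j(p^*)-D_j(b_1)}\int_{b_1}^{p^*}\big[D_j(b)-D_j(b_1)\big]\,db \\
&\le \frac{D_j(p^*)-I^{(j)}}{D_j(p^*)-D_j(b_1)}\,\big[D_j(p^*)-D_j(b_1)\big]\,(p^*-b_1) \\
&= \big[D_j(p^*)-I^{(j)}\big]\,(p^*-b_1)\;\longrightarrow\;0,
\end{align*}
so your assertion that the vanishing of the correction terms ``holds only if this ratio remains bounded'' is incorrect; with this one estimate your steps 1--3 already yield the claimed limit in full generality. (Incidentally, the paper itself takes the two limits separately, which is only justified when each $\gamma_{i,j}(b_1)$ is bounded; the combined estimate is what repairs that as well.) The thing that \emph{does} fail in the counterexample is non-negativity of $\gamma_{i,j}(p^*)$, i.e.\ for $b_1$ near $p^*$ the object $s_{p^*}(b_1)$ ceases to be a bona fide mixed strategy. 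You are right to worry about that, but it is a defect in the proposition's construction itself (the paper only verifies feasibility in the weaker aggregate sense quoted after the statement), and it cannot be cured by the false lemma you propose --- it would require changing which $ALLOC$ solution the algorithm selects, not an argument about the solution it happens to return.
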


\begin{proof}
To prove the proposition, we proceed as follows.
  Note that by definition we have for every targeting group $j  \in \setCpt$ that
  $$\sum_{i \in \setB_j(\opt)} \Bsbl\gamma_{i,j}(b_1) +  \gamma_{i,j}(b^*) \Bsbr = \sum_{i \in \setB_j(\opt)} \gamma_{i,j} = 1.$$

   Furthermore, for every targeting group $j  \in \setCpt$ we have for every campaign $i \in \setB_j(\opt)$,
   \begin{eqnarray*}
     && \gamma_{i,j}(b_1) D_j(b_1) + \gamma_{i,j}(b^*)D_j(p^*) \\
     &=&  \gamma_{i,j}(b_1) D_j(b_1) 
     +  \Bsbl \gamma_{i,j} - \gamma_{1,1}(b_1) \Bsbr D_j(p^*) \\
     &=&  \gamma_{i,j}  D_j(p^*) -  \gamma_{i,j}(b_1) ( D_j(p^*) -  D_j(b_1) )\\
     &=&  \gamma_{i,j}  D_j(p^*) -  \gamma_{i,j} D_j(p^*) + I_{i,j} \\
     &=&  \gamma^*_{i,j} D_j(p^*) = I_{i,j}.
     \end{eqnarray*}

   Using this result, it follows that for each campaign $i \in  \setCpc$ we have that
\begin{eqnarray*}
  && \sum_{j \in \setA_i(\opt)} \Bsbl\gamma_{i,j}(b_1)  D_j(b_1) +  \gamma_{i,j}(b^*) D_j(p^*) \Bsbr \\
  &=& \sum_{j \in \setA_i(\opt)} \gamma^*_{i,j} D_j(p^*) \\
  &=& I_i.
\end{eqnarray*}  
This means that the mixed strategy $s_{p^*}(b_1)$ obtains for each campaign $i \in \setCpc$ the required number of impressions.

Finally,  under the bidding strategy $s(b_1)$ we have for each campaign $i \in  \setCpc$ the cost for obtaining the
$$I_{i,j} =  \gamma^*_{i,j} D_j(p^*)$$
impressions from targeting group $j \in \setA_i(\opt)$ is given by
\begin{eqnarray*}
 && \gamma_{i,j}(b_1) \left [  D_j(b_1)b_1 - \int_0^{b_1} D_j(b)db \right ]\\
   && + (\gamma_{i,j} -  \gamma_{i,j}(b_1))  \left [  D_j(p^*)p^* - \int_0^{p^*} D_j(b)db \right ] \\
    &=&  \gamma_{i,j}(b_1) D_1(b_1)b_1 + (\gamma_{i,j} -  \gamma_{i,j}(b_1)) D_1(p^*)p^* \\
    && - \gamma_{i,j} \int_0^{p^*} D_j(b)db \\
  && +  \gamma_{i,j}(b_1)  \int_{b_1}^{p^*} D_j(b)db\\
  &=&  \gamma_{i,j}(b_1) D_1(b_1)p^* + (\gamma_{i,j} -  \gamma_{i,j}(b_1)) D_1(p^*)p^* \\
   && +  \gamma_{i,j}(b_1) D_1(b_1)(b_1 - p^*) \\
    && - \gamma_{i,j} \int_0^{p^*} D_j(b)db \\
  && +  \gamma_{i,j}(b_1)  \int_{b_1}^{p^*} D_j(b)db\\
  &=&  I_{i,j} - \gamma_{i,j} \int_0^{p^*} D_j(b)db \\
     && +  \gamma_{i,j}(b_1) D_1(b_1)(b_1 - p^*) \\
    && +  \gamma_{i,j}(b_1)  \int_{b_1}^{p^*} D_j(b)db.
  \end{eqnarray*}
Combining the above results, it then follows that
   \begin{eqnarray*} 
     && \lim_{b_1 \uparrow p^*} C_{p^*}(s_{p^*}(b_1)) \\
     &=&  \sum_{i \in \setCpc} \sum_{j \in \setA_i(\opt)} \left [I_{i,j}p^*   -  \gamma_{i,j} \int_0^{p^*} D_j(b)db \right ]\\
     && +   \lim_{b_1 \uparrow p^*} \sum_{i \in \setCpc} \sum_{j \in \setA_i(\opt)} \gamma_{i,j}(b_1) D_1(b_1)(b_1 - p^*) \\
     && +    \lim_{b_1 \uparrow p^*} \sum_{i \in \setCpc} \gamma_{i,j}(b_1)  \int_{b_1}^{p^*} D_j(b)db \\
     &=&   \sum_{i \in \setCpc} \sum_{j \in \setA_i(\opt)} \left [I_{i,j}p^*   -  \gamma_{i,j} \int_0^{p^*} D_j(b)db \right ]\\
   \end{eqnarray*}
   
     It then follows that     
    \begin{eqnarray*} 
     && \lim_{b_1 \uparrow p^*} C_{p^*}(s_{p^*}(b_1)) \\    
     &=&  \sum_{i \in \setCpc} \sum_{j \in \setA_i(\opt)} I_{i,j}p^* \\
      && - \sum_{i \in \setCpc} \sum_{j \in \setA_i(\opt)} \gamma_{i,j} \int_0^{p^*} D_j(b)db
    \end{eqnarray*}
    Using the fact that
    \begin{eqnarray*} 
      &&  \sum_{i \in \setCpc} \sum_{j \in \setA_i(\opt)} \gamma_{i,j} \int_0^{p^*} D_j(b)db    \\
      &=& \sum_{j \in \setCpt} \sum_{i \in \setB_j(\opt)} \gamma_{i,j} \int_0^{p^*} D_j(b)db, 
    \end{eqnarray*}
we obtain that
    \begin{eqnarray*}
     &=& I_{p^*} p^* 
      - \sum_{j \in \setCpt} \sum_{i \in \setB_j(\opt)} \gamma_{i,j} \int_0^{p^*} D_j(b)db \\
     &=& I_{p^*} p^* 
      - \int_0^{p^*} \left ( \sum_{j \in \setCpt} D_j(b) \right ) db \\
    &=& I_{p^*} p^* - \int_0^{p^*} D_{p^*}(b)db.
\end{eqnarray*}  
The result of the proposition then follows.

\end{proof}

\end{document}